\date{\today}
\def\R{{\mathbf R}}
\def\C{{\mathbf C}}
\def\Q{{\mathbf Q}}
\def\1{\mathbf 1}
\def\Z{{\mathbf Z}}
\def\N{{\mathbf N}}
\def\cS{{\mathcal S}}
\def\I{{\mathcal I}}
\def\sf{{h}}
\renewcommand{\=}{:=}
\newcommand{\tred}{T_\mathrm{red}}
\def\be{\begin{equation}}
\def\ee{\end{equation}}
\DeclareMathOperator{\spann}{span}
\DeclareMathOperator{\interior}{Int}
\DeclareMathOperator{\pic}{Pic}
\DeclareMathOperator{\cl}{Cl}
\DeclareMathOperator{\id}{Id}
\DeclareMathOperator{\stjarna}{Star}
\newtheorem{thm}{Theorem}[section]
\newtheorem{lma}[thm]{Lemma}
\newtheorem{cor}[thm]{Corollary}
\newtheorem{prop}[thm]{Proposition}
\newtheorem*{thmA}{Theorem A}
\newtheorem*{thmA'}{Theorem A'}
\newtheorem*{thmB'}{Theorem B'}
\newtheorem*{thmB}{Theorem B}
\newtheorem*{thmC}{Theorem C}
\newtheorem*{thmC'}{Theorem C'}
\theoremstyle{definition}
\theoremstyle{remark}
\newtheorem{preremark}[thm]{Remark}
\newtheorem{preex}[thm]{Example}
\newenvironment{remark}{\begin{preremark}}{\qed\end{preremark}}
\newenvironment{ex}{\begin{preex}}{\qed\end{preex}}
\numberwithin{equation}{section}
\begin{document}

\title[Stabilization]{Stabilization of monomial maps}

\date{\today}
\thanks{First author partially supported by the NSF.\ Second author partially supported by the Swedish Research Council and the NSF}
\author{Mattias Jonsson}
\author{Elizabeth Wulcan}

\address{Dept of Mathematics, University of Michigan, Ann Arbor \\ MI 48109-1043\\ USA}

\email{mattiasj@umich.edu, wulcan@umich.edu}

\subjclass{}

\keywords{}

\begin{abstract}
  A monomial (i.e.\ equivariant) selfmap 
  of a toric variety is called stable
  if its action on the Picard group commutes with iteration.
  Generalizing work of Favre to higher dimensions,
  we show that under suitable conditions,
  a monomial map
  can be made stable by refining the underlying fan.
  In general, the resulting toric variety has quotient singularities;
  in dimension two we give criteria for when it can be chosen
  smooth, as well as examples when it cannot.
\end{abstract}

\maketitle

\section*{Introduction}
An important part of higher-dimensional complex dynamics concerns the 
construction of currents and measures that are
invariant under a given meromorphic selfmap $f:X\dashrightarrow X$ of
a compact complex manifold $X$. 
In doing so, it is often desirable that the 
action of $f$ on the cohomology of $X$ be compatible with iteration; 
following Sibony~\cite{Sibony} (see also~\cite{FS2}) we then call $f$ 
(algebraically) \emph{stable}. 

If $f$ is not stable, we can try to make a bimeromorphic change of 
coordinates $X'\to X$ such that the induced selfmap 
of $X'$ becomes stable.
Understanding when this is possible seems 
to be a difficult problem. 
On the one hand, Favre~\cite{Favre} showed that 
stability is not always achievable. 
On the other hand, it can be achieved for 
bimeromorphic
maps of surfaces~\cite{DF}, for a large class of monomial mappings 
in dimension two~\cite{Favre} (more on this below) 
and for polynomial maps of $\C^2$~\cite{dyncomp}. 
Beyond, these classes, very little seems to be known. 

In this paper, we study the stabilization problem for 
monomial (or equivariant) maps of
toric varieties, extending certain results of Favre to higher dimensions.
A toric variety $X=X(\Delta)$ is defined by a lattice $N\cong\Z^m$ 
and fan $\Delta$ in $N$. A monomial selfmap $f:X\dashrightarrow X$
corresponds to a $\Z$-linear map $\phi:N\to N$.
See Sections~\ref{sec:toric} and~\ref{sec:monmaps} for more details.

We work in codimension one
and say that $f$ is \emph{1-stable} if $(f^n)^*=(f^*)^n$,
where $f^*$ denotes the action on the Picard group of $X$.
Geometrically, this means that no iterate of $f$ sends a
hypersurface into the indeterminacy set of $f$~\cite{FS2,Sibony}.
\begin{thmA}
  Let $\Delta$ be a fan in a lattice $N\cong\Z^m$, 
  and $f:X(\Delta)\dashrightarrow X(\Delta)$ a monomial map. 
  Assume that the eigenvalues of the associated linear map
  $\phi:N_\R\to N_\R$ are real and satisfy 
  $\mu_1 >\mu_2 > \dots >\mu_m>0$. 
  Then there exists a complete 
  simplicial refinement $\Delta'$ of $\Delta$ 
  such that $X(\Delta')$ is projective and the induced map 
  $f:X(\Delta')\dashrightarrow X(\Delta')$ is $1$-stable.
\end{thmA}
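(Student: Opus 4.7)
My plan has three main ingredients: a combinatorial reformulation of 1-stability, exploitation of the eigenstructure of $\phi$, and a rational-approximation construction.

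First I would translate 1-stability into a combinatorial condition on the fan. For a complete simplicial fan $\Delta'$ and a linear map $\phi\colon N\to N$, the pullback $f_\phi^\ast$ on torus-invariant divisors is encoded by a matrix $A=A(\Delta',\phi)$ whose $(\rho_0,\rho)$-entry is the coefficient of $v_{\rho_0}$ in the expansion of $\phi(v_\rho)$ in the primitive generators of the minimal cone of $\Delta'$ containing $\phi(v_\rho)$. Comparing $A^n$ with the analogous matrix built from $\phi^n$ yields the following criterion: $f_\phi$ is 1-stable if and only if, for every ray $\rho\in\Delta'(1)$, the sequence of minimal cones $\sigma_n(\rho)\in\Delta'$ of the iterates $\phi^n(v_\rho)$ satisfies $\phi(\sigma_n)\subset \sigma_{n+1}$ for every $n\geq 0$. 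Equivalently, the forward orbit of every ray of $\Delta'$ must eventually be trapped inside a $\phi$-invariant cone of $\Delta'$.

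Next, the spectral hypothesis supplies the geometric input. Since $\mu_1>\cdots>\mu_m>0$ are real, positive, and distinct, $\phi$ is diagonalisable over $\R$ with real eigenvectors $e_1,\ldots,e_m$. Each of the $2^m$ open ``eigen-orthants'' cut out by the $m$ coordinate hyperplanes in this basis is $\phi$-invariant, and in each orthant the dynamics is hyperbolic with $\R_{>0}(\pm e_1)$ as the unique attracting fixed ray. Within each orthant $C$ I would then choose (i) a rational top-dimensional simplicial cone $\sigma_C\subset C$ containing $\R_{>0}(\pm e_1)$ in its relative interior and satisfying $\phi(\sigma_C)\subset \sigma_C$; and (ii) enough additional rational rays in $C$ so that the finite forward orbits of the rays of $\Delta$ lying in $C$, up to the step at which they enter $\sigma_C$, are all captured by the subfan. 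The strict eigenvalue hierarchy supplies enough hyperbolic contraction for both choices to be made with only finitely many rational rays. Taking the union over orthants and refining across the eigen-hyperplanes into a single complete simplicial fan refining $\Delta$ gives $\Delta'$; projectivity can be imposed by constructing a strictly convex piecewise-linear support function from the eigen-coordinates.

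The main obstacle is the irrationality of the eigenvectors: the orthant boundaries and the attracting ray $\R_{>0}(\pm e_1)$ are not generally rational, so we cannot include them literally. One must approximate them by rational cones. The strict separation $\mu_1>\cdots>\mu_m$ is precisely what supplies the ``hyperbolic room'' needed for a rational $\phi$-invariant $\sigma_C$ to exist. In dimension two Favre handled this by an angle argument on $\P^1$; in higher dimensions the natural approach is an induction along the flag $V_k=\spann(e_1,\ldots,e_k)$, processing one codimension at a time to produce compatible refinements that bridge the non-rational boundaries.
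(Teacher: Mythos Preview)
Your combinatorial reformulation of 1-stability and the general strategy (find $\phi$-invariant cones that eventually absorb every ray, then add the finitely many transit rays) match the paper exactly; this is the paper's Corollary~\ref{invkoner}.

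However, there is a genuine gap in how you propose to build the invariant cones.  Your plan is to place one full-dimensional invariant cone $\sigma_C$ in each eigen-orthant, containing $\R_{>0}(\pm e_1)$, and absorb every ray of $\Delta$ into one of these.  This works only if every lattice ray is attracted to $\R_{>0}(\pm e_1)$, i.e.\ has nonzero $e_1$-component.  But the original fan $\Delta$ may contain rays lying in a proper \emph{rational} invariant subspace of $N_\R$.  For instance, if $\mu_2,\mu_3\in\Z$ then $\R e_2$, $\R e_3$ and $\spann(e_2,e_3)$ are rational; a ray of $\Delta$ inside $\spann(e_2,e_3)$ is attracted to $\R_{>0}(\pm e_2)$, not to $\R_{>0}(\pm e_1)$, and will never enter any of your $\sigma_C$.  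More subtly, $\spann(e_2,e_3)$ can be rational even when neither $e_2$ nor $e_3$ is, so this phenomenon is not detected by the eigenvectors alone.  The paper's introduction flags exactly this: the argument you sketch is essentially the proof of Theorem~B (trivial starting fan), and the paper explicitly explains why it fails for Theorem~A.

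Your proposed fix, induction along the flag $V_k=\spann(e_1,\dots,e_k)$, does not close the gap, because that flag is in general not rational and does not capture all rational invariant subspaces (e.g.\ $\R e_2$ when $\mu_2\in\Z$ is not any $V_k$).  The paper's key structural insight is that the set of \emph{all} rational $\phi$-invariant subspaces of $N_\R$ is finite and carries a natural \emph{tree} structure $T_{\mathrm{red}}(\phi)$, not a flag.  The proof then builds an ``adapted system'' of invariant cones indexed by this tree --- one cone for each rational invariant subspace $V$ and each sign pattern, sitting inside $V$ and nested compatibly with the tree --- by first writing down an explicit real invariant system (Section~\ref{step3}) and then rationally perturbing it while simultaneously incorporating it into the given fan via Lemma~\ref{barvinok}.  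The interaction between the tree structure and the original fan is where the technical work lies, and it is not visible from the orthant picture.
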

Here $N_\R$ denotes the vector space $N\otimes_\Z\R$.
The variety $X'=X(\Delta')$ will not be smooth in general
but it will have at worst quotient singularities. 
We can pick $X'$ smooth at the expense of replacing
$f$ by an iterate (but allowing more general $\phi$):
\begin{thmA'}
  Let $\Delta$ be a fan in a lattice $N$ of rank $m$, 
  and let $f:X(\Delta)\dashrightarrow X(\Delta)$ be a monomial map. 
  Suppose that the eigenvalues of the 
  associated linear map $\phi:N_\R\to N_\R$
  are real and satisfy  $|\mu_1| >|\mu_2| >\dots >|\mu_m|>0$. 
  Then there exists a complete (regular) refinement $\Delta'$ of 
  $\Delta$ and $n_0\in\N$, such that $X(\Delta')$ 
  is smooth and projective and the induced map
  $f^n:X(\Delta')\dashrightarrow X(\Delta')$ is 
  1-stable for all $n\geq n_0$. 
\end{thmA'}
When the fan we start with is trivial, that is,
the initial toric variety is the torus $(\C^*)^m$,
we can relax the assumptions on the eigenvalues
slightly and obtain:
\begin{thmB}
  Let $f:(\C^*)^m\to(\C^*)^m$ be a monomial map. 
  Suppose that the associated linear map $\phi:N_\R\to N_\R$ is
  diagonalizable, with real eigenvalues 
  $\mu_1>\mu_2\geq\mu_3\geq\dots\geq\mu_m>0$. 
  Then there exists a complete simplicial fan $\Delta$ 
  such that $X(\Delta)$ is projective and 
  $f:X(\Delta)\dashrightarrow X(\Delta)$ is 1-stable.
\end{thmB}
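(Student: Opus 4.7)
The strategy is to build a complete simplicial projective fan $\Delta$ in $N$ whose combinatorics is dictated by the strict dominance $\mu_1 > \mu_i$ for $i \ge 2$, and then to verify 1-stability from the dynamics of $\phi$ on the cones of $\Delta$.

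Since $\phi$ is diagonalizable with $\mu_1$ a simple eigenvalue strictly greater than the rest, we have a $\phi$-invariant splitting $N_\R = V_1 \oplus V'$, where $V_1 = \R v_1$ is the top eigenline and $V' = \bigoplus_{i \ge 2} V_i$ has codimension one. On the projective space $\P(N_\R)$, the induced map has $[v_1]$ as a globally attracting fixed point with basin $\P(N_\R) \setminus [V']$, and $[V']$ as an invariant repelling hyperplane. The central step is to produce a rational simplicial cone $\sigma^+ \subset N_\R$, spanned by $m$ primitive lattice vectors, that contains $v_1$ in its relative interior and satisfies $\phi(\sigma^+) \subset \interior(\sigma^+) \cup \{0\}$: one constructs $\sigma^+$ by choosing $m$ rational rays sufficiently close to $v_1$, using that the projective contraction at $[v_1]$ forces $\phi$ to send any small enough simplicial cone around $v_1$ strictly into itself, and that $N_\Q$ is dense in $N_\R$ (so lattice generators can be found arbitrarily close to $v_1$, even when $v_1$ is irrational). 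Apply the same to $-v_1$ to get a cone $\sigma^-$. Now complete $\{\sigma^+, \sigma^-\}$ and their faces to a complete simplicial fan $\Delta$ by simplicially subdividing the remainder of $N_\R$, arranging the subdivision so as to be both projective (via a strictly convex piecewise-linear support function) and to keep all rays away from $V'$ itself.

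For 1-stability, consider any ray $\rho \in \Delta(1)$ and its forward orbit $\{\phi^n(\rho)\}_{n \ge 0}$. Rays inside $\sigma^+$ have orbits staying in $\sigma^+$ by $\phi(\sigma^+) \subset \sigma^+$; rays outside enter $\sigma^+$ after finitely many steps, since by construction their directions lie off $[V']$ and hence are attracted to $[v_1]$. The strict inclusion $\phi(\sigma^+) \subset \interior(\sigma^+)$ ensures that once in $\sigma^+$, the smallest cone $\sigma_n \in \Delta$ containing $\phi^n(\rho)$ satisfies $\phi(\sigma_n) \subset \sigma^+$, hence is contained in a cone of $\Delta$. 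Translating this into the combinatorial condition for monomial maps, no divisor $D_\rho$ is mapped by any iterate $f^n$ into the indeterminacy locus of $f$, which is the standard characterization of 1-stability.

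The main obstacle is the construction of the rational attracting cone $\sigma^+$ when $v_1$ is irrational (which happens whenever $\mu_1$ is not a rational number); this requires a careful rational-approximation argument blending the strict projective contraction at $[v_1]$ with lattice density. A secondary challenge is carrying out the equatorial subdivision so as to be simultaneously simplicial, projective, and to avoid placing rays inside $V'$, which requires some toric combinatorics. Unlike Theorem~A, whose proof relies on the full flag of $\phi$-invariant subspaces coming from all distinct eigenvalues, the argument here must proceed using only the coarse splitting $N_\R = V_1 \oplus V'$; the diagonalizability hypothesis is what lets this suffice, replacing a finer flag by one dominant attractor and a complementary invariant hyperplane.
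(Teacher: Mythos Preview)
Your core strategy---constructing rational $\phi$-invariant cones $\sigma^\pm$ around the dominant eigendirections $\pm v_1$ and using them as attractors---matches the paper's. But your execution of the fan construction is vague in exactly the place where the stability argument breaks, and the paper's approach is much cleaner there.

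The gap is in your stability paragraph. If you ``simplicially subdivide the remainder'' between $\sigma^+$ and $\sigma^-$, you will in general introduce rays $\rho$ that take several iterates before entering $\sigma^\pm$. For such a $\rho$, the image $\phi(\rho)$ lands in the interior of some cone $\tau\in\Delta$ with $\tau\ne\sigma^\pm$, and the stability criterion (Corollary~\ref{stablma}) then requires that $\phi^{n'}(\tau)$ lie in a single cone of $\Delta$ for every $n'$. You give no reason for this: $\tau$ may be $m$-dimensional and $\phi(\tau)$ may straddle several maximal cones. Your argument only controls the orbit once it has already entered $\sigma^+$; keeping rays off $V'$ guarantees eventual absorption but says nothing about the transient. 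To repair this you would need to add all intermediate images $\phi^n(\rho)$ as rays of the fan (as in Section~\ref{step7} for Theorem~A), which then interacts badly with the projectivity and subdivision you left unspecified.

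The paper sidesteps all of this. Having chosen lattice generators $v_1,\dots,v_m$ for the invariant cone $\sigma=\sigma^+$, it takes the symmetric fan
\[
\Delta=\Bigl\{\textstyle\sum_{j=1}^m\R_+\epsilon_jv_j : \epsilon\in\{-1,0,+1\}^m\Bigr\},
\]
whose only rays are $\R_+(\pm v_j)$. Since $v_j\in\sigma$ and $\sigma$ is invariant, $\phi(\pm v_j)\in\pm\sigma$: every ray lands in $\sigma$ or $-\sigma$ after \emph{one} step, so Corollary~\ref{invkoner} applies immediately. This fan is automatically complete and simplicial, has no rays in $E^\perp$, and is projective via the explicit strictly convex support function $\max_j|v_j^*|$. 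All three of the issues you flagged as secondary challenges thus evaporate with this single choice.
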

It is unclear whether $X(\Delta)$ can be chosen smooth
in Theorem~B, even if we replace $f$ by an iterate, 
see Remark~\ref{R301}.
Picking $X(\Delta')$ smooth in Theorem~A (without passing to
an iterate) also seems quite delicate.
We address the latter problem only in dimension two:
\begin{thmC}
  Let $\Delta$ be a fan in a lattice $N$ of rank $m=2$, 
  let $f:X(\Delta)\dashrightarrow X(\Delta)$ be a monomial map,
  and let $\mu_1$, $\mu_2$ be the eigenvalues of the 
  associated linear map $\phi:N_\R\to N_\R$, labeled so that
  $|\mu_1|\ge|\mu_2|$. 
  Suppose that any of the following conditions holds:
  \begin{enumerate}
  \item[(a)]
    $|\mu_2|<1$;
  \item[(b)]
    $|\mu_1|>|\mu_2|$ and $\mu_1, \mu_2\in \Z$;
  \item[(c)]
    $\mu_1,\mu_2\in\R$ and $\mu_1 \mu_2>0$.
  \end{enumerate}
  Then there is a complete (regular) refinement $\Delta'$ of $\Delta$ 
  such that $X(\Delta')$ is smooth (and projective) and 
  $f:X(\Delta')\dashrightarrow X(\Delta')$ is $1$-stable.
\end{thmC}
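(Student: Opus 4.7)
My strategy is to start from the complete simplicial $1$-stable refinement $\Delta_0\supseteq\Delta$ furnished by Theorem~A, and then refine $\Delta_0$ further to a smooth fan $\Delta'$ while preserving $1$-stability. In dimension two, the obstruction to smoothness of a simplicial $2$-cone $\sigma=\langle u,w\rangle$ is the multiplicity $|\det(u,w)|$, and the Hirzebruch--Jung continued fraction algorithm canonically resolves such a cone by inserting a chain of primitive lattice vectors $u=u_0,u_1,\ldots,u_k=w$ whose consecutive pairs form a $\Z$-basis of $N$. The crux of the proof is to show that this resolution can be performed cone by cone in a manner compatible with the monomial map $f$.

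First I would reduce the three cases to the positive real eigenvalue hypothesis of Theorem~A. When $\mu_1,\mu_2<0$ (possibly allowed by (b) and (c)) I would replace $\phi$ by $-\phi$, which has positive eigenvalues, and work on a $(-\id)$-symmetric refinement of $\Delta$; under such symmetry the maps $f_\phi$ and $f_{-\phi}$ share the same indeterminacy pattern, so $1$-stability transfers from one to the other. Case (b) with mixed signs is handled directly: integrality of $\mu_1,\mu_2$ places the eigenvectors in $N_\Q$ and makes the required combinatorics rational and explicit. Case (a) with nonreal (hence complex conjugate, $|\mu_1|=|\mu_2|<1$) eigenvalues forces $\phi$ to be a contracting rotation-scaling on $N_\R$, for which a rotation-invariant complete smooth fan is automatically $1$-stable.

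Having reduced to positive real eigenvalues $\mu_1>\mu_2>0$, the key observation is that a $2$-cone of $\Delta_0$ containing an attracting eigendirection in its interior is forward-$\phi$-invariant: $\phi$ attracts all rays towards the dominant eigenray, so such a cone absorbs its own image. Consequently, any rays inserted inside such a cone by Hirzebruch--Jung have forward orbits trapped in the ambient cone. Using the characterization of $1$-stability for monomial maps in terms of the forward $\phi$-orbits of rays of $\Delta'$, the local problem becomes: does $\phi$ map each Hirzebruch--Jung sub-cone $\sigma_j=\langle u_{j-1},u_j\rangle$ into a single sub-cone, rather than straddling one of the new internal boundaries?

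The main obstacle is precisely this local match between the Hirzebruch--Jung expansion and the linear action of $\phi$, which acts on ray ratios in the eigenbasis by multiplication by $\mu_2/\mu_1$. Case (b) with integer eigenvalues is the most tractable, as the relevant arithmetic is rational and explicit; case (a) with $|\mu_2|<1$ provides strong contraction that forces orbits into the attracting eigendirection quickly enough that any sufficiently fine subdivision works; case (c) is the most delicate, since irrational positive eigenvalues with ratio $\mu_2/\mu_1$ close to $1$ offer minimal dynamical separation. The hardest step, I anticipate, is fine-tuning the continued fraction data of the refinement to the expansion of $\mu_2/\mu_1$ (or a suitable power) so that the inserted rays' orbits avoid creating new indeterminacy configurations; this is where the hypothesis $\mu_1\mu_2>0$ (ruling out sign-flips between consecutive iterates) is essential.
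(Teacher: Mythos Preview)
Your reduction to Theorem~A breaks down in exactly the case that carries most of the weight of Theorem~C. Consider case~(a) with real irrational eigenvalues of opposite signs, for instance $\phi$ with characteristic polynomial $t^2-t-1$, so $\mu_1=(1+\sqrt5)/2>0$ and $\mu_2=(1-\sqrt5)/2\in(-1,0)$. Neither $\phi$ nor $-\phi$ has both eigenvalues positive, so Theorem~A is simply inapplicable; your ``replace $\phi$ by $-\phi$'' trick only works when both eigenvalues have the same sign. (You also lose the non-diagonalizable subcase $\mu_1=\mu_2\in\Z$ of~(c), which Theorem~A excludes by requiring distinct eigenvalues.) The paper handles the mixed-sign irrational subcase of~(a) by an entirely different and genuinely number-theoretic argument: it builds a periodic sequence of integer matrices $(A_n)_{n\in\Z}$ encoding $\phi$ in a chain of regular bases produced by barycentric subdivision, and shows---using the inequality $\sqrt{D}-2<\mu_1+\mu_2<\sqrt{D}$ coming from $|\mu_2|<1$---that some $A_n$ has nonnegative entries, which is equivalent to the existence of a \emph{regular} invariant cone around the dominant eigendirection (Lemma~\ref{lma:criterion2} and Proposition~\ref{P101}).

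More broadly, even where Theorem~A does apply, your strategy of ``first get a singular stable fan, then smooth it by Hirzebruch--Jung'' is the wrong direction and you correctly sense that the obstacle is real. After you subdivide a singular invariant $2$-cone into regular pieces, you need one of those regular pieces to still be $\phi$-invariant; Example~\ref{motex1} shows that for mixed-sign eigenvalues this can fail for every regular cone. The paper never tries to smooth a stable fan. Instead it works the opposite way: it first finds small \emph{regular} invariant cones $\sigma_1\supset E_1$ and $\sigma_2$ with $\phi^{-1}(\sigma_2)\subset\sigma_2$ (automatic in case~(c), explicit via rational eigendirections in case~(b), and via the continued-fractions argument above in case~(a)), and then uses the orbit-propagation Lemma~\ref{lma:criterion1} to extend these to a complete regular stable refinement of $\Delta$. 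The hypothesis $|\mu_2|<1$ or $\mu_1\mu_2>0$ is used precisely to guarantee that regular invariant cones exist, not to control a smoothing step after the fact.
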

Example~\ref{motex1} shows that Theorem~C may fail 
when $|\mu_1|>|\mu_2|>1$, whereas 
Example~\ref{motex2} and~\cite[Exemple~2]{Favre}
show that it may fail 
when $|\mu_1|=|\mu_2|$ and $\mu_1/\mu_2$ is 
root of unity different from $1$.

Theorem~C should be compared with the work of Favre~\cite{Favre},
in which the following result is proved.
\begin{thmC'}
  Let $\Delta$ be a fan in a lattice $N$ of rank $m=2$, 
  let $f:X(\Delta)\dashrightarrow X(\Delta)$ be a monomial map
  and let $\mu_1$, $\mu_2$ be the eigenvalues of the 
  associated linear map $\phi:N_\R\to N_\R$.
  Then we are in precisely one of the following cases:
  \begin{itemize}
  \item[(i)]
    $\mu_1$, $\mu_2$  are complex conjugate and 
    $\mu_1/\mu_2$ is not a root of unity;
  \item[(ii)]
    there exists a complete refinement $\Delta'$ of $\Delta$ 
    such that the induced map 
    $f:X(\Delta')\dashrightarrow X(\Delta')$ is $1$-stable.
  \end{itemize}
\end{thmC'}
Here $X(\Delta')$ is not necessarily smooth.
The main result in~\cite{Favre} also asserts that we can
make $f$ 1-stable on a smooth toric variety by allowing
ramified covers, but there is a gap in this argument,
see Remark~\ref{R201}.

While monomial maps are quite special, 
they are interesting in their own right.
We refer to the article by Hasselblatt and Propp~\cite{propp}
for more information, and to the paper by Bedford and Kim~\cite{BK2}
for the problem---related to stability---of characterizing 
monomial maps whose degree growth sequence satisfies a linear
recursion formula.
For nonmonomial maps in higher dimensions, 
stability or degree growth is only understood 
in special cases~\cite{BK1,Nguyen}.

We note that many of the results in this paper have 
been obtained independently by Jan-Li Lin~\cite{Lin}. 
In particular, \cite[Thm~5.7~(a)]{Lin} 
coincides with our Theorem B' in Section~\ref{sec:proofB}.

There is a conjectured relationship between the eigenvalues $\mu_j$ and 
the \emph{dynamical degrees} $\lambda_j$, $1\le j\le m$ of $f$ 
(see~\cite{RuSh,DS1,Gu2} for a definition of dynamical degrees). 
Namely, the conjecture says that $\lambda_j=|\mu_1|\dots |\mu_j|$. 
See~\cite{FW} for work in this direction.
Granting this formula, the condition 
$|\mu_1|>\dots>|\mu_m|$ is equivalent to 
$j\mapsto \log \lambda_j$ being strictly concave. 
Now the conjecture does hold in dimension two.
This means that~(a) in 
Theorem~C is equivalent to (a')~$\lambda_2 <\lambda_1$ 
and~(b) is equivalent to~(b')~$\lambda_2<\lambda_1^2$ and $\lambda_1\in\Z$. 
Also observe that~(c) is satisfied for $f^2$ 
as soon as $\lambda_2<\lambda_1^2$. 

\medskip
To prove the theorems above, we translate them into statements about 
the linear map $\phi:N\to N$. 
What we ultimately prove is that we can refine the original 
fan $\Delta$ (by adding cones) so that the new fan $\Delta'$ contains
a finite collection $\mathcal{T}$ of invariant cones 
(i.e.\ $\phi$ maps each cone into itself) 
that together attract all one-dimensional cones in $\Delta'$.
More precisely, for every one-dimensional cone $\rho\in\Delta'$,
there exists $n_0\ge0$ such that $\phi^n(\rho)$ lies in a cone
in $\mathcal{T}$ for $n\ge n_0$ and $\phi^n(\rho)$ is a 
one-dimensional cone in $\Delta'$ for $0\le n<n_0$,
see Corollary~\ref{invkoner}.

Constructing such a collection of cones is also the strategy 
by Favre~\cite{Favre} for proving Theorem~C'.
In fact, the proof of Theorem~B is 
a straightforward adaptation of arguments in~\cite{Favre}. 
Indeed, the dynamics of $\phi:N_\R\to N_\R$ is easy to
understand: under iteration, a typical vector $v$ tends to move
towards the one-dimensional eigenspace associated to the 
largest eigenvalue $\mu_1$ of $\phi$. 
We can therefore find a simplicial cone $\sigma$ of maximum
dimension which is invariant under $\phi$; it will contain 
an eigenvector $e_1$ corresponding to $\mu_1$, in its interior. 
Using this cone we easily construct a fan for which Theorem~B holds.

On the other hand, Theorems~A and~A' are much more delicate
as we have to take into account the original fan $\Delta$.
For example, the simple argument for Theorem~B outlined above
will not work in general, as it is possible that the one-dimensional
cone $\R_+e_1$ is rational and belongs to $\Delta$. Moreover, 
there may be one-dimensional rays in $\Delta$ that are not attracted
to $\R_+e_1$ under iteration. Thus we must proceed more
systematically, and this is where the argument becomes 
significantly more involved in higher dimensions.

What we do is to look at the set $\tred(\phi)$
of all invariant rational subspaces 
$V\subseteq N_\R$. This means that $\phi(V)=V$ and that
$V\cap N_\Q$ is dense in $V$. It turns out that 
$\tred(\phi)$ is a finite set that admits a natural 
tree structure determined by the dynamics.
Using this tree we inductively 
construct a collection $\mathcal T$ of
invariant rational cones that together attract any lattice
point in $N$. The construction is flexible enough so that
the cones in $\mathcal T$ are ``well positioned'' with respect to 
the original fan $\Delta$.
In particular, each cone in $\mathcal T$ is contained in a unique cone
in $\Delta$. This allows us to refine $\Delta$ into a fan that 
contains all cones in $\mathcal T$. Significant care is called for,
however, as the construction is done inductively over
a tree, and incorporating a new cone into a given fan will 
require many cones of the original fan to be subdivided.
The actual construction is therefore more technical
than may be expected.

In dimension two, these difficulties are largely invisible.
They are the reason why we in Theorem~A impose stronger conditions
on the eigenvalues than Favre did in~\cite{Favre}. 
It would be interesting to try to weaken the conditions in Theorem~A.

\smallskip
The proof of Theorem~C is of a quite different nature, 
and uses the original
ideas of Favre as well as some methods from classical 
number theory. Indeed, some of the arguments are parallel to the 
analysis of the continued fractions expansion of quadratic 
surds~\cite{HardyWright}.

The paper is organized as follows. 
In Sections~\ref{sec:toric} and~\ref{sec:monmaps} we discuss 
toric varieties and monomial mappings. 
Section~\ref{sec:dimtwo} is concerned with the two-dimensional 
situation, namely the proofs of Theorems~C and~C', and examples
showing that smooth stabilization is not always possible.
Then in Sections~\ref{sec:proofA} and~\ref{sec:proofB}
we return to the higher-dimensional case and prove
Theorems~A,~A' and~B. Finally, in Section~\ref{sec:examples}
we illustrate our proof of Theorem~A in dimensions two and three
and give a counterexample to the statement in Theorem~A 
when the eigenvalues have mixed signs.

\textbf{Acknowledgment}
We thank Alexander Barvinok, Stephen DeBacker,
Charles Favre, Jan-Li Lin and Mircea~Musta\c{t}\u{a}
for fruitful discussions.
We also thank the referee for 
several helpful suggestions, in particular that
the toric varieties in Theorems~A and~A' could be made projective.

\section{Toric varieties}\label{sec:toric}
A toric variety is a (partial) compactification of the torus $T\cong (\C^*)^m$, which contains $T$ as a dense subset and which admits an action of $T$ that extends the natural action of $T$ on itself. We briefly recall some of the basic definitions, referring to~\cite{F1} and~\cite{Oda} for details.

\subsection{Fans and toric varieties}\label{fansoch}
Let $N$ be a lattice isomorphic to $\Z^m$ and let $M=\text{Hom}(N,\Z)$ denote the dual lattice. Set $N_\Q:=N\otimes_\Z \Q$, $N_\R:=N\otimes_\Z \R$, and $N_\C:=N\otimes_\Z \C$.

A \emph{cone} $\sigma$ in $N_\R$ is a set that is closed under positive scaling. If $\sigma$ is convex and does not contain any line in $N_\R$ it is said to be \emph{strictly convex}. If $\sigma$ is of the form $\sigma=\sum\R_+v_i$ for some $v_i\in N$, 
we say that $\sigma$ is a \emph{convex rational cone} \emph{generated} by the vectors $v_i$. A \emph{face} of $\sigma$ is the intersection of $\sigma$ and a supporting hyperplane. The \emph{dimension} of $\sigma$ is the dimension of the linear space $\R\cdot \sigma$ spanned by $\sigma$. One-dimensional cones are called \emph{rays}. Given a ray $\sigma$, the associated \emph{primitive vector} is the first lattice point met along $\sigma$. A $k$-dimensional cone is \emph{simplicial} if it can be generated by $k$ vectors. A cone is \emph{regular} if it is generated by part of a basis for $N$. If $\sigma$ is a rational cone we denote by $\interior\sigma$ the \emph{relative interior} of $\sigma$, that is,
$\interior\sigma$ are the elements that are in $\sigma$ but not in any proper face of $\sigma$. If $\sigma$ is generated by $v_i$, then $\interior\sigma=\sum \R_+^*v_i$. Write $\partial\sigma:=\sigma\setminus\interior\sigma$. 

A \emph{fan} $\Delta$ in $N$ is a finite collection of rational strongly convex cones in $N_\R$ such that each face of a cone in $\Delta$ is also a cone in $\Delta$, and moreover the intersection of two cones in $\Delta$ is a face of both of them. The last condition could also be replaced by the relative interiors of the cones in $\Delta$ being disjoint. Note that a fan is determined by its maximal cones with respect to inclusion. Let $\Delta(k)$ denote the set $k$-dimensional faces of $\Delta$. The \emph{support} $|\Delta|$ of the fan $\Delta$ is the union of all cones of $\Delta$. In fact, given any collection of cones $\Sigma$, we will use $|\Sigma|$ to denote the union of the cones in $\Sigma$. 
If $|\Delta|=N_\R$, then the fan $\Delta$ is said to be \emph{complete}.
If all cones in $\Delta$ are simplicial, then $\Delta$ is said to be \emph{simplicial}, and if all cones in $\Delta$ are regular, then $\Delta$ is said to be \emph{regular}. 
A \emph{sub-fan} of a fan $\Delta$ is a fan $\tilde\Delta$ with 
$\tilde\Delta\subseteq\Delta$.
A fan $\Delta'$ is a \emph{refinement} of $\Delta$ if each cone in $\Delta$ is a union of cones in $\Delta'$. 
Every fan admits a regular refinement.

A strongly convex rational cone $\sigma$ in $N$ determines an affine toric variety $U_\sigma$ and a fan $\Delta$ determines a toric variety $X(\Delta)$, obtained by gluing together the $U_\sigma$ for $\sigma\in\Delta$.
The variety $U_\tau$ is dense in $U_\sigma$ if $\tau$ is a face of $\sigma$. 
In particular, the torus $T_N:=U_{\{0\}}=N\otimes_\Z\C^*\cong (\C^*)^m$, 
is dense in $X(\Delta)$. The torus acts on $X(\Delta)$, the orbits 
being exactly the varieties $U_\sigma$, $\sigma\in\Delta$.

A toric variety $X(\Delta)$ is compact if and only if $\Delta$ is complete.
Toric varieties are normal and Cohen-Macaulay. 
If $\Delta$ is simplicial, 
then $X(\Delta)$ has at worst quotient singularities,
and  $X(\Delta)$ is non-singular if and only if $\Delta$ is regular.

\subsection{Incorporation of cones}
To prove Theorems A, A', and C we will refine fans by adding certain cones. 
The following lemma is probably well known; we learned it from A. Barvinok. The techniques in the proof will not be used elsewhere in the paper.

\begin{lma}\label{barvinok}
Let $\Delta$ be a simplicial fan and let $\sigma_0\in\Delta$. Assume that $\sigma_1\subseteq \sigma_0$ is a simplicial cone, such that $\partial \sigma_1\cap\partial\sigma_0$ is a face of both $\sigma_1$ and $\sigma_0$. Then there exists a simplicial refinement $\Delta'$ of $\Delta$ such that $\sigma_1\in\Delta'$, and if $\sigma\in\Delta$ satisfies $\sigma\not\supseteq\sigma_0$, then $\sigma\in\Delta'$. Moreover, all rays in $\Delta'(1)\setminus\Delta(1)$ are one-dimensional faces of $\sigma_1$. 
\end{lma}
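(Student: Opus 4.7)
The plan is to construct $\Delta'$ by a sequence of star subdivisions that add, one at a time, the rays of $\sigma_1$ missing from $\Delta(1)$.

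I would first identify these new rays. Set $\tau := \partial\sigma_1 \cap \partial\sigma_0$. Since $\sigma_0$ is simplicial and $\tau$ is a face of it, every ray of $\tau$ is a ray of $\sigma_0$ and hence already lies in $\Delta(1)$. Any ray $\rho$ of $\sigma_1$ not contained in $\tau$ cannot lie in $\partial\sigma_0$, for otherwise $\rho \subseteq \partial\sigma_1 \cap \partial\sigma_0 = \tau$; so $\rho \subseteq \interior\sigma_0$. Such $\rho$ is not in $\Delta(1)$ either, because the fan intersection axiom would then force $\rho = \rho\cap\sigma_0$ to be a proper face of $\sigma_0$, contradicting $\rho \subseteq \interior\sigma_0$. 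So the rays of $\sigma_1$ missing from $\Delta(1)$ are precisely those contained in $\interior\sigma_0$; label them $w_1,\dots,w_s$.

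Next I would fix an ordering of the $w_j$ and build simplicial fans $\Delta = \Delta^{(0)}, \dots, \Delta^{(s)} =: \Delta'$, with $\Delta^{(j)}$ the star subdivision of $\Delta^{(j-1)}$ at $w_j$. At step $j$ the point $w_j$ lies in the relative interior of a unique minimal cone $\sigma_j^* \in \Delta^{(j-1)}$, so the star subdivision is well defined; it is simplicial, adds only the single ray $w_j$, and leaves unchanged every cone of $\Delta^{(j-1)}$ that does not contain $\sigma_j^*$. Since $w_j \in \interior\sigma_0$, the minimality of $\sigma_j^*$ prevents it from lying in any proper face of $\sigma_0$; consequently any $\sigma \in \Delta$ with $\sigma \not\supseteq \sigma_0$ satisfies $\sigma_j^* \not\subseteq \sigma$ for every $j$ and is therefore preserved throughout, giving the second conclusion of the lemma. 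The third conclusion, that $\Delta'(1)\setminus\Delta(1) \subseteq \sigma_1(1)$, is immediate from the construction.

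The main obstacle is ensuring that the growing face $\tau_j := \tau + \sum_{i\le j}\R_+ w_i$ of $\sigma_1$ belongs to $\Delta^{(j)}$ at every step. Given the invariant $\tau_{j-1}\in \Delta^{(j-1)}$, the star subdivision at $w_j$ will produce $\tau_j$ precisely when $\tau_{j-1}$ is a face of some cone of $\Delta^{(j-1)}$ containing $\sigma_j^*$. To guarantee this I would first reduce to the case $\tau = \{0\}$ by passing to the quotient $N_\R/\spann(\tau)$, so that every ray of $\sigma_1$ becomes interior to the image of $\sigma_0$, and then choose the order of the $w_j$ to follow the flag $\tau_0 \subset \tau_1 \subset \dots \subset \tau_s = \sigma_1$, so that at each stage the required face–cone adjacency holds inductively. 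After $s$ steps $\sigma_1 = \tau_s$ belongs to $\Delta' := \Delta^{(s)}$, and combined with the preservation and new-ray properties noted above, all three conclusions of the lemma follow.
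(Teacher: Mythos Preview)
Your approach via iterated star subdivision is natural, but the proposal has a genuine gap at the point you yourself flag as ``the main obstacle.'' You assert that one can ``choose the order of the $w_j$ \dots\ so that at each stage the required face--cone adjacency holds inductively,'' but every ordering of the $w_j$ produces a flag $\tau_0\subset\tau_1\subset\dots\subset\tau_s=\sigma_1$; the content lies entirely in proving that \emph{some} ordering has the property that $\tau_{j-1}$ is a face of a cone of $\Delta^{(j-1)}$ containing $\sigma_j^*$ at each step, and you give no argument for this. The adjacency can fail: take $\sigma_0=\R_+e_1+\R_+e_2+\R_+e_3$ and $\sigma_1$ with rays $w_1=(1,1,1)$, $w_2=(10,2,1)$, $w_3=(2,1,10)$. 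Subdividing in the order $w_1,w_2,w_3$, one finds that $w_3$ lies in the cone $\R_+w_1+\R_+e_1+\R_+e_3$ of $\Delta^{(2)}$, which does not contain $w_2$ at all; the resulting $\Delta^{(3)}$ does not have $\sigma_1$ as a cone. (A different order, e.g.\ $w_2,w_1,w_3$, happens to succeed here, but you would need to prove that a successful order always exists, and this is not obvious in higher dimension.) The quotient reduction to $\tau=\{0\}$ does not help with this issue; the same obstruction appears in the quotient.

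The paper avoids this difficulty entirely by using a one-shot construction rather than an iteration: it lifts the rays of $\sigma_1$ not on $\partial\sigma_0$ to generic heights in $N_\R\oplus\R$, takes the lower faces of the resulting cone, and projects back to obtain a simplicial fan $\Delta_0$ with support $\sigma_0$ that contains $\sigma_1$ as a cone by design (this is the standard regular/placing triangulation, cf.\ Ziegler). It then explicitly describes $\Delta'$ as the union of the cones of $\Delta$ not containing $\sigma_0$ with all cones $\sigma+\tau$ where $\sigma\in\Delta_0$ and $\tau$ is a face of a cone of $\Delta$ meeting $\sigma_0$ only at the origin, and verifies directly that this is a simplicial fan refining $\Delta$. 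This sidesteps any ordering issue.
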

For examples of cones $\sigma_1\subseteq\sigma_0$, see Figure 1. 

\begin{figure}\label{barv}
\begin{center}
\includegraphics{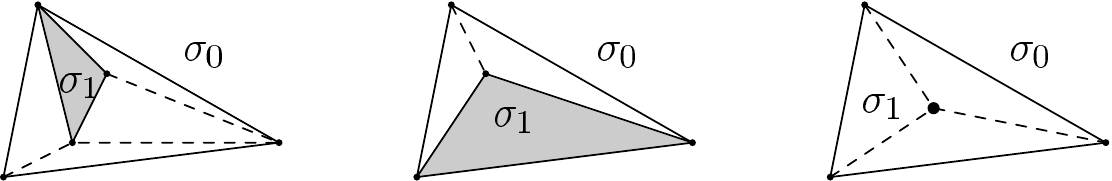}
\caption{Examples of cones $\sigma_1\subseteq \sigma_0$ in 
  Lemma~\ref{barvinok}. The cone $\sigma_0$ is three-dimensional; 
  the figure shows the intersection with an affine plane. 
  The dashed lines indicate the fan $\Delta_0$ in the proof of 
  Lemma~\ref{barvinok}.}
\end{center}
\end{figure}
\begin{proof}
Following~\cite[p.~129~ff]{Ziegler} we construct a fan $\Delta_0$, that contains $\sigma_1$ and whose support is $\sigma_0$. 
Embed $N_\R$ in $N_\R\oplus \R$ as the hyperplane $\{x_{m+1}=0\}$, let $\tau_0$ be the image of $\sigma_0$, and let $\pi:N_\R\oplus \R\to N_\R$ be the projection. 
For each ray $\R_+ v$ of $\sigma_1$ that is not in $\partial \sigma_0$, choose $t_v\in\R_+$ and let $T$ be the convex hull in $N_\R\oplus\R$ of $\tau_0$ and the rays $\R_+(v,t_v)$. 
Observe that for a generic choice of $t_v$, the faces of $T$ are simplicial cones. 
Let $\Delta_0$ be the collection of images of faces of $T$ except $\sigma_0$ itself. Note that $\pi$ maps $\partial T\setminus \interior \tau_0$ 1-1 onto $\sigma_0$.  It follows that $\Delta_0$ is a simplicial fan, with support equal to $\sigma_0$, and $\sigma_1$ is one of the cones in $\Delta_0$. 
If $\sigma\in\Delta_0$, then either $\sigma$ is a face of $\sigma_0$ or $\interior\sigma\subseteq\interior\sigma_0$. Also, note that all rays in $\Delta_0\setminus\Delta$ are one-dimensional faces of $\sigma_1$. 

\smallskip

We will now construct a fan $\Delta'$ that refines $\Delta$ and contains $\Delta_0$ as a subfan. Let $\Delta_1'$ be the collection of cones in $\Delta$ that do not contain $\sigma_0$. Moreover, let $\Delta_2'$ be the collection of cones of the form $\sigma+\tau$, where $\sigma\in\Delta_0$ and $\tau\in\Delta$ is a face of a cone $\tilde\tau\in\Delta$, such that $\tilde\tau\supseteq\sigma_0$, but $\tau\cap\sigma_0=\{0\}$. Finally, let $\Delta'$ be the union of $\Delta_1'$ and $\Delta_2'$. Note that this union is not disjoint. Observe that all cones in $\Delta'$ are simplicial. We claim that $\Delta'$ is a simplicial fan. 

To prove the claim, first note that if $\sigma\in\Delta$ does not contain $\sigma_0$, then clearly the faces of $\sigma$ do not contain $\sigma_0$. In other words, a face of a cone in $\Delta_1'$ is in $\Delta_1'$. Moreover, a face of a simplicial cone $\sigma+\tau\in\Delta_2'$ is of the form $\sigma'+\tau'$, where $\sigma'$ is a face of $\sigma$ and $\tau'$ is a face of $\tau$. Since $\Delta_0$ is a fan, $\sigma'\in\Delta_0$, and since $\Delta$ is a fan, $\tau'\in\Delta$. Moreover $\{0\}\subseteq \tau'\cap\sigma_0\subseteq\tau\cap\sigma_0=\{0\}$ so $\sigma' +\tau'\in\Delta_2'$. To conclude, a face of a cone in $\Delta'$ is in $\Delta'$. 

It remains to prove that if $\rho$ and $\rho'$ are two distinct cones in $\Delta'$, then $\interior\rho\cap\interior\rho'=\emptyset$. We have to consider three cases. 
In the first case, $\rho,\rho'\in\Delta_1'\subseteq\Delta$. Then $\interior\rho\cap\interior\rho'=\emptyset$, since $\Delta$ is a fan and $\rho\neq\rho'$. 

In the second case, $\rho\in\Delta_2'\setminus\Delta_1'$ and $\rho'\in\Delta_1'$. Then we can write $\rho=\sigma+\tau$, where $\interior\sigma\subseteq \interior\sigma_0$. Indeed, if $\sigma$ is a face of $\sigma_0$, then $\rho\in\Delta_1'$. It follows that $\interior\rho\cap\interior\rho'\subseteq \interior (\sigma_0+\tau)\cap\interior\rho' =\emptyset$. Indeed, $\sigma_0+\tau$ is a cone in $\Delta$, since $\Delta$ is simplicial, and by construction $\rho'\neq \sigma_0+\tau$. 

In the third case, $\rho=\sigma+\tau$ and $\rho'=\sigma'+\tau'$ are both in $\Delta_2'\setminus \Delta_1'$. 
If $\tau\neq \tau'$, then by construction $\sigma_0+\tau$ and $\sigma_0+\tau'$ are two distinct cones in $\Delta$. Hence $\interior\rho\cap\interior\rho'\subseteq \interior(\sigma_0+\tau)\cap\interior(\sigma_0+\tau')=\emptyset$. Next, assume that $\tau = \tau'$. Then $\sigma_0\cap\tau=\{0\}$, which implies that each element $v\in\sigma_0+\tau$ admits a unique representation $v=v_0+w$, where $v_0\in\sigma_0$ and $w\in\tau$. Assume that $v\in\interior \rho\cap\interior\rho'$. Then $v_0\in\interior\sigma\cap\interior\sigma'$, since $\interior\rho=\interior\sigma+\interior\tau$. Hence $\sigma=\sigma'$, which implies that $\rho=\rho'$. To conclude, $\Delta'$ is a simplicial fan.

Now let us show that $\Delta'$ has the desired properties. First, observe that all cones in $\Delta_0$, and in particular $\sigma_1$, are in $\Delta'$. Indeed, if $\sigma\in\Delta_0$, then $\sigma=\sigma+0\in\Delta_2'$. Next, by definition of $\Delta'$, each cone in $\Delta$ that does not contain $\sigma_0$ is in $\Delta_1'\subseteq\Delta'$. Moreover each ray in $\Delta'\setminus\Delta$ is in $\Delta_0\setminus \Delta$ and hence a one-dimensional face of $\sigma_1$. 

It remains to show that $\Delta'$ refines $\Delta$. Consider $\rho\in\Delta$. If $\rho$ does not contain $\sigma_0$, then $\rho$ is itself in $\Delta'$, so assume that $\rho\supseteq\sigma_0$. Since $\Delta$ is simplicial this means that $\rho=\sigma_0+\tau$ for some face $\tau$ of $\rho$, for which $\sigma_0\cap\tau=\{0\}$. Thus 
$
\rho=\sigma_0+\tau=\left (\bigcup_{\sigma\in\Delta_0}\sigma\right )+\tau=
\bigcup_{\sigma\in\Delta_0}\left (\sigma+\tau \right ),
$
and each $\sigma+\tau\in\Delta'_2\subseteq \Delta'$ by construction. 
Hence $\Delta'$ refines $\Delta$. 
\end{proof}

\subsection{Invariant divisors and support functions}\label{divisorstycke}
Let $\cl (X)$ and $\pic (X)$ denote the groups of Weil and Cartier divisors on $X$, respectively, modulo linear equivalence. For $X=X(\Delta)$, $\cl(X)$ and $\pic(X)$ are generated by divisors that are invariant under the action of the torus $T_N$. 
Since $X(\Delta)$ is normal, every Cartier divisor defines a Weil divisor. 

Each ray $\rho$ of $\Delta$ determines a prime Weil divisor $D(\rho)$ that is invariant under the action of $T_N$, and these divisors generate $\cl(X)$ and $\pic(X)$. A $T_N$-invariant Weil divisor is of the form $\sum a_i D(\rho_i)$, where $a_i\in\Z$ and the sum runs over the rays in $\Delta$. 

A $T_N$-invariant Cartier divisor can be represented as a certain piecewise linear  function. We say that $\sf:|\Delta|\to\R$ is a \emph{($\Delta$-linear) $\Q$-support function} if it is linear on each cone of $\Delta$ and if $\sf(|\Delta|\cap N_\Q)\subseteq \Q$. If the restriction of $\sf$ to a cone is given by some element of $M$ (rather than $M_\Q$) then $h$ is said to be a \emph{($\Delta$-linear) support function}. There is a one-to-one correspondence between $T_N$-invariant $\Q$-Cartier divisors and $\Q$-support functions and between $T_N$-invariant Cartier divisors and support functions, see~\cite[Ch~6, p.~6]{Mustata}. Moreover, a $T_N$-invariant $\Q$-Cartier divisor is a Weil divisor if and only if $\sf (|\Delta|\cap N)\subseteq \Z$. Given support functions $h_1$ and $h_2$, the corresponding divisors are linear equivalent if and only if $h_1-h_2$ is linear.

Note that a $\Delta$-linear support function is determined by its values on primitive vectors of rays in $\Delta$. In particular, if $D$ is a $\Q$-Cartier divisor of the form $D=\sum a_i D(\rho_i)$, then the corresponding $\Q$-support function 
is determined by $h(v_i)=a_i$, where $v_i$ is the primitive vector of $\rho_i$. 
Conversely, a support function $h$ determines a 
Weil-divisor $\sum h(v_i) D(\rho_i)$. 

A $\Delta$-linear support function $\sf$ is said to be \emph{strictly convex} if it is convex and defined by different elements $\xi_\sigma\in M$ for each $\sigma\in\Delta(m)$. A compact toric variety $X(\Delta)$ is projective if and only if there is a strictly convex $\Delta$-linear support function, see~\cite[Cor.~2.16]{Oda}. 
We will then say that $\Delta$ is projective. 
\begin{lma}\label{L402}
  Any fan $\Delta$ admits a regular (hence simplicial) 
  projective refinement.
\end{lma}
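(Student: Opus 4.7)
The plan is to proceed in two steps: first, produce a simplicial projective refinement of $\Delta$ (assuming $\Delta$ is complete, which we may since projectivity in our sense requires compactness), and second, refine it further into a regular fan while preserving projectivity.

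For the first step, I would invoke the classical fact that any complete fan admits a projective refinement, and give a concrete construction. Pick any complete projective fan $\Sigma_0$ in $N$ (for instance the fan of $\P^m$ associated to a basis of $N$), and let $\Delta\wedge\Sigma_0$ denote the common refinement, whose maximal cones are the intersections $\sigma\cap\tau$ for $\sigma\in\Delta(m)$, $\tau\in\Sigma_0(m)$. If $h_0$ is a strictly convex $\Sigma_0$-support function, then $h_0$ is also strictly convex as a support function on $\Delta\wedge\Sigma_0$, so the common refinement is projective. Refining further via star subdivisions along interior rays of the non-simplicial cones then yields a simplicial projective refinement $\Delta_1$ of $\Delta$; projectivity is preserved by the argument explained next.

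For the second step, I would perform successive star subdivisions on $\Delta_1$ to reach a regular fan. The key technical point is that star subdivision preserves projectivity: if $\Sigma$ is a projective fan with strictly convex support function $h$, and $v\in\interior\sigma\cap N$ for some $\sigma\in\Sigma$, then the star subdivision $\Sigma^*(v)$ admits a strictly convex support function $h'$ that agrees with $h$ outside the star of $\sigma$, satisfies $h'(v)=h(v)-\epsilon$ for a small $\epsilon>0$, and extends linearly on each new maximal cone. A direct check shows that for $\epsilon$ small enough, strict convexity at each wall of $\Sigma^*(v)$ is preserved, so $\Sigma^*(v)$ is projective. Choosing $v$ so as to successively decrease the multiplicity of each non-regular cone yields, after finitely many steps, a regular projective refinement $\Delta'$ of $\Delta$, as desired.

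The main obstacle is the projectivity-preservation step under star subdivision, which requires a careful $\epsilon$-perturbation argument on strictly convex support functions. The argument is classical (cf.\ Oda and Fulton), but verifying strict convexity across each new wall requires some combinatorial bookkeeping of the cones adjacent to $\sigma$, as well as a uniform choice of $\epsilon$ for the finitely many new walls introduced.
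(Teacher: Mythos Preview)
Your second step---showing that star subdivision preserves projectivity, so that the standard desingularization procedure yields a regular projective fan---is correct and is exactly what the paper's sketch asserts (citing~\cite[\S2.6]{F1}).

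The first step, however, has a genuine gap. You claim that if $h_0$ is a strictly convex $\Sigma_0$-support function, then $h_0$ is also strictly convex on the common refinement $\Delta\wedge\Sigma_0$. This is false: a maximal cone $\tau\in\Sigma_0(m)$ will typically be subdivided into several maximal cones of $\Delta\wedge\Sigma_0$, and $h_0$ is given by the \emph{same} linear functional $\xi_\tau\in M$ on each of them, so $h_0$ is convex but not strictly convex on the refinement. (Concretely, take $\Sigma_0$ the fan of $\P^2$ and $\Delta$ the fan of $\P^1\times\P^1$ in $\Z^2$: one maximal cone of $\Sigma_0$ splits in two, and $h_0$ is linear across the new wall.) Thus your argument does not establish that $\Delta\wedge\Sigma_0$ is projective, and the subsequent ``projectivity is preserved'' step has nothing to preserve. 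Producing a projective refinement of an arbitrary complete fan is precisely the content of the toric Chow Lemma~\cite[Prop.~2.17]{Oda}, which the paper simply invokes as a black box; its proof requires more than a single common-refinement step. Once you grant that lemma, your star-subdivision argument finishes the job just as the paper indicates.
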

\begin{proof}
  This is well known, so we only sketch the proof.
  First, by the toric Chow Lemma~\cite[Prop.~2.17]{Oda}
  $\Delta$ admits a projective refinement. In general,
  this refinement is not regular or even simplicial. 
  However, one can check that the standard procedure for
  desingularizing a toric variety by refining the fan
  (see~\cite[\S2.6]{F1}) preserves projectivity.
\end{proof}
\begin{lma}\label{L401}
  If the fan $\Delta$ in Lemma~\ref{barvinok} is projective, 
  then the refinement $\Delta'$ in that lemma can also be chosen projective.
\end{lma}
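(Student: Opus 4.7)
The plan is to start from a strictly convex $\Delta$-linear support function $h$ (whose existence is guaranteed by projectivity of $\Delta$) and show that $h':=h+\epsilon\psi$ is a strictly convex $\Delta'$-linear support function for sufficiently small $\epsilon>0$, where $\psi$ is a $\Delta'$-linear support function built from the heights $\{t_v\}$ used in the proof of Lemma~\ref{barvinok}.

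To construct $\psi$, I first extract a function $\phi:\sigma_0\to\R_{\ge 0}$ from the cone $T\subset N_\R\oplus\R$ of that proof, by setting $\phi(w):=\max\{s\ge 0:(w,s)\in T\}$. Since $T$ is strongly convex this maximum is finite, and $\phi$ turns out to be concave, $\Delta_0$-linear, vanishing on $\partial\sigma_0$, and taking the value $t_v$ on each new primitive vector $v$. For generic choice of the heights $\{t_v\}$---an open condition we may assume imposed in the proof of Lemma~\ref{barvinok}---the function $\phi$ is moreover strictly convex on $\Delta_0$. I then extend $\phi$ to $\psi:|\Delta|\to\R$ by declaring $\psi(v_0+w):=\phi(v_0)$ whenever $v_0+w\in\sigma_0+\tau$ for some $\tau$ with $\sigma_0+\tau\in\Delta$ (the decomposition $v_0\in\sigma_0,\ w\in\tau$ being unique by simpliciality of $\Delta$), and $\psi:=0$ on points lying in no such cone. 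The vanishing $\phi|_{\partial\sigma_0}\equiv 0$ makes $\psi$ well defined and continuous. On each cone $\sigma+\tau\in\Delta_2'$ the function $\psi$ equals the pullback of the linear form $\phi|_\sigma$ along the projection parallel to $\tau$, and on cones of $\Delta_1'$ not containing $\sigma_0$ it vanishes identically; in both cases it is linear, so $\psi$ is a $\Delta'$-linear support function.

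Strict convexity of $h+\epsilon\psi$ is then checked wall by wall. The walls shared by two cones $\sigma+\tau,\sigma'+\tau\in\Delta_2'$ with $\sigma,\sigma'$ adjacent maximal cones of $\Delta_0$---those arising purely from the refinement inside some $\sigma_0+\tau$---are the easy case: $h$ is linear across such a wall, while $\psi$ inherits the strictly positive convexity ``break'' of $\phi$ on $\Delta_0$, so $h'$ is strictly convex there for every $\epsilon>0$. All remaining walls are codimension-one cones of $\Delta$ that survive in $\Delta'$, possibly as proper faces of $\Delta_2'$-cones. Across such a wall $\omega$, $h$ has a strictly positive break $b_h$, a linear form on the ambient maximal cone of $\Delta'$ vanishing on $\omega$ and positive outside; by continuity, the break $b_\psi$ of $\psi$ is likewise a linear form vanishing on $\omega$. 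Both $b_h$ and $b_\psi$ thus lie in the one-dimensional space of linear forms vanishing on the codimension-one subspace spanned by $\omega$ inside the ambient maximal cone, so $b_\psi=c\,b_h$ for some $c\in\R$; the total break of $h'$ equals $(1+\epsilon c)b_h$, which remains strictly positive whenever $\epsilon<|c|^{-1}$ (and unconditionally when $c\ge 0$). Since $\Delta'$ has only finitely many walls, choosing $\epsilon>0$ smaller than all of the resulting bounds produces the required strictly convex $\Delta'$-linear support function, so $\Delta'$ is projective. The main subtlety is isolating this ``wrong-sign but controlled'' behavior of $b_\psi$ at the boundary walls of the refined region, for which the proportionality forced by codimension is the decisive trick.
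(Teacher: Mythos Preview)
Your strategy matches the paper's: perturb the strictly convex $\Delta$-linear function $h$ by a small multiple of the extension of a $\Delta_0$-linear function vanishing on $\partial\sigma_0$, then check strict convexity wall by wall. Your proportionality argument $b_\psi=c\,b_h$ at the non-internal walls (forced because both breaks vanish on the codimension-one span of the wall) is a clean way to see that a small enough $\epsilon$ works there, and is a bit more explicit than the paper's sketch.

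There is, however, a sign error. You correctly observe that $\phi(w)=\max\{s\ge 0:(w,s)\in T\}$ is \emph{concave}: its graph is the upper boundary of the convex cone $T$. Hence $\psi$ is concave on each $\sigma_0+\tau$, and at the internal walls---precisely where $h$ is linear---the function $h+\epsilon\psi$ is strictly \emph{concave}, not convex. Your later claim that ``$\psi$ inherits the strictly positive convexity break of $\phi$'' contradicts your own earlier (correct) observation. The paper's $h_0$ is, up to a positive scalar, $-\phi$; the fix is simply to take $h'=h-\epsilon\psi$, or equivalently to replace $\phi$ by $-\phi$ from the outset. With that correction your argument goes through and is essentially identical to the paper's.

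One small imprecision worth noting: the non-internal walls of $\Delta'$ need not literally be cones of $\Delta$ (consider a wall between $\sigma+\tau$ and $\sigma'+\tau'$ with $\tau\ne\tau'$). What is true---and all your proportionality argument actually needs---is that across any such wall the two adjacent maximal $\Delta'$-cones lie in \emph{distinct} maximal $\Delta$-cones, so $h$ has a nonzero break there.
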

\begin{proof}
  Assume that $\Delta$ is projective and let $h$ be a strictly convex 
  $\Delta$-linear support function.
  We will show that we can modify $h$ to a strictly convex 
  $\Delta'$-linear function $h'$, where $\Delta'$ is the fan 
  constructed in the proof of Lemma~\ref{barvinok}. 
  We will use the notation from that proof.

  The construction of the fan $\Delta_0$ in the proof of 
  Lemma~\ref{barvinok} implies that there is a strictly convex 
  $\Delta_0$-linear support function $h_0$ that is 
  zero on the boundary of $|\Delta_0|=\sigma_0$. 
  Pick a norm on $M_\R$ and choose 
  $0<\epsilon\ll\min_{\sigma,\tau\in\Delta(m),\sigma\ne\tau}\|\xi_\sigma-\xi_\tau\|$ 
  if $h=\xi_\sigma\in M_\Q$ on $\sigma\in\Delta(m)$. 
  
  Consider $\tau\in\Delta(m)$. Either $\tau\cap\sigma_0=\{0\}$ 
  or $\tau\supseteq \sigma_0$.
  In the first case $\tau\in\Delta'(m)$ and we let $h'=h$ on $\tau$. 
  
  Next, assume that $\tau\supseteq\sigma_0$. 
  Since $\Delta$ is simplicial there exists a unique 
  maximal face $\tau'$ of $\tau$ such that $\tau'\cap\sigma_0=\{0\}$ 
  It follows that an element in $\sigma_0$ 
  admits a unique representation $s+t$, with $s\in\sigma_0$ and $t\in\tau'$. 
  In $\Delta'$, $\tau$ will be subdivided into maximal cones 
  of the form $\rho=\tau'+\sigma$ where $\sigma$ 
  is a cone of maximal dimension in $\Delta_0$. 
  On each $\rho$ let $h'$ be defined by $h'(s+t)=\epsilon h_0(s)$. 
  Clearly $h'$ is linear and strictly convex 
  on the subfan of $\Delta'$ that has support on $\tau$. 
  Moreover, since $h_0$ vanishes on the boundary of $\sigma_0$, the choice of 
  $\epsilon$ ensures that  $h'$ is continuous and convex on $\Delta'$ 
  and different on all $\sigma\in\Delta'(n)$. 
  In other words, $h'$ is $\Delta'$-linear and strictly convex
  and hence $\Delta'$ is projective. 
\end{proof}
If all maximal cones of $\Delta$ are of dimension $m$, then 
$H^2(X(\Delta), \Z)=\pic(X(\Delta))$. 
If $\Delta$ is complete and regular, then $H^{1,1}(X(\Delta))=H^2(X (\Delta), \C)$. 
If $\Delta$ is simplicial, then $X(\Delta)$ is $\Q$-factorial, that is, a Weil-divisor is $\Q$-Cartier.

\section{Monomial maps}\label{sec:monmaps}
Let $\Delta$ and $\Delta'$ be fans in $N\cong \Z^m$ and $N'\cong \Z^{m'}$, respectively. Then any $\Z$-linear map $\phi : N \to N'$ gives rise to a rational map $f: X(\Delta)\dashrightarrow X(\Delta')$, which is equivariant with respect to the actions of $T_N$ and $T_{N'}$. 
Let $e_1, \dots e_m$ and $e'_1,\dots,e'_{m'}$ be bases of $N$ and $N'$,
respectively, and let $x_1,\dots,x_m$ 
and $x'_1,\dots,x'_{m'}$ 
be the corresponding bases for the duals $M$ and $M'$. 
Then $\phi=\sum_{1\leq j\leq m, 1\leq k\leq m'}a_{kj}e_j\otimes x_k'$, where $a_{kj}\in\Z$. Let $z_1,\dots, z_m$ and $z'_1,\dots, z'_{m'}$ be the induced coordinates on $T_N\cong (\C^*)^m$ and $T_{N'}\cong (\C^*)^{m'}$, respectively. 
Then $f:T_N\to T_{N'}$ is given by the monomial map $f: (z_1,\dots, z_m)\mapsto (z_1^{a_{11}}\dots z_m^{a_{1m}}, \dots, z_1^{a_{m'1}}\dots z_m^{a_{m'm}})$. Conversely, any rational, equivariant map $f: X(\Delta)\dashrightarrow X(\Delta')$ comes from a $\Z$-linear map $\phi:N\to N'$, see~\cite[p.19]{Oda}.

The map $f: X(\Delta)\dashrightarrow X(\Delta')$ is holomorphic precisely if the extension $\phi:N_\R\to N'_\R$ satisfies that for each $\sigma\in \Delta$ there is a $\sigma'\in\Delta'$, such that $\phi (\sigma)\subseteq \sigma'$. Let $\phi_{\Delta\Delta'}:(N,\Delta)\to(N',\Delta')$ be the map that takes $(v,\sigma)$ to $(\phi(v), \sigma')$, where $\sigma'$ is the smallest cone in $\Delta'$ that contains $\phi(\sigma)$. If $f$ is holomorphic 
we say that $\phi_{\Delta\Delta'}$ is \emph{regular}. If $f$ is not holomorphic, $\phi_{\Delta\Delta'}$ is not defined everywhere; we write $\phi_{\Delta\Delta'}: (N,\Delta)\dashrightarrow (N', \Delta')$. 
Observe that there is a sub-fan $\widetilde \Delta$ of $\Delta$ which contain all rays of $\Delta$, such that $\phi_{\Delta\Delta'}$ is well-defined on $(N,\widetilde\Delta)$. Indeed, the image of a ray in $\Delta$ under $\phi$ is always contained in a cone in $\Delta'$. 

A $\Z$-linear map $\phi:N\to N'$ induces a $\Z$-linear map 
$\phi^*: M'\to M$ given by  $\phi^*\xi'=\xi'\circ \phi$.

\subsection{Desingularization}
By regularizing fans we can desingularize toric varieties and equivariant maps between toric varieties. 
First, let $\widetilde \Delta$ be a regular refinement of $\Delta$ and let $\text{id}_{\widetilde\Delta\Delta}:(N,\widetilde\Delta)\to (N,\Delta)$ be the map induced by $\text{id}:N\to N$. 
Then the map $\pi: X(\widetilde\Delta)\to X(\Delta)$ induced by 
$\text{id}_{\widetilde\Delta\Delta}$ is a resolution of singularities, see~\cite[Ch.~2.5]{F1}. 

Second, let $N$ and $N'$ be lattices of the same rank, let $\Delta$ and $\Delta'$ be fans in $N$ and $N'$, respectively, and let $\phi :N\to N'$ be a $\Z$-linear map of maximal rank.  We claim that there is a regular refinement $\widetilde \Delta$ of $\Delta$ such that the map $\phi_{\widetilde\Delta\Delta'}:(N,\widetilde\Delta)\dashrightarrow (N',\Delta')$ induced by $\phi:N\to N'$ is regular. We obtain the left-hand diagram 
of~\eqref{lyftning} 
\begin{equation}\label{lyftning}
  \xymatrix{
    &&
    (N,\widetilde\Delta)
    \ar[d]_{\text{id}_{\widetilde\Delta\Delta}}
    \ar[dr]^{\phi_{\widetilde\Delta\Delta'}}
    &&
    X(\tilde\Delta)
    \ar[d]_{\pi}
    \ar[dr]^{\tilde{f}}
    &&
    \\
    &&
    (N,\Delta)
    \ar@{-->}[r]^{\phi_{\Delta\Delta'}}
    &    
    (N',\Delta')
    &
    X(\Delta)
    \ar@{-->}[r]^{f}
    &    
    X(\Delta')
  }
\end{equation}
inducing the right-hand diagram, where
$\pi:X(\widetilde \Delta)\to X(\Delta)$ 
and $\tilde f:X(\widetilde \Delta)\to X(\Delta')$ are holomorphic.

To prove the claim, let $\phi^{-1}(\Delta')$ be the collection of cones $\phi^{-1}(\sigma')$, where $\sigma'\in\Delta'$. Since $\phi$ is of maximal rank, the cones in $\phi^{-1}(\Delta')$ are strictly convex, and thus $\phi^{-1}(\Delta')$ is a fan. Now, any regular fan $\widetilde\Delta$ that refines both $\Delta$ and $\phi^{-1}(\Delta ')$ has the desired properties, and recall from Section~\ref{fansoch} that such a fan always exists.

\subsection{Pullback and pushforward under holomorphic maps}
Let $N$ and $N'$ be lattices of the same rank, let $\Delta$ and $\Delta'$ be fans in $N$ and $N'$, respectively, and let $\phi:N\to N'$ be a $\Z$-linear map of maximal rank, such that $\phi_{\Delta\Delta'}$ is regular. Let $f:X(\Delta)\rightarrow X(\Delta')$ be the corresponding holomorphic map on toric varieties. 

Let $D$ be a $T_{N'}$-invariant $\Q$-Cartier divisor on $X(\Delta')$ and let $\sf_D$ be the corresponding $\Q$-support function. Then the pullback $f^*D$ is a well-defined $\Q$-Cartier divisor, see~\cite[Ch.~2.2]{F2}, and $h_{f^*D}=\phi^* h_D$, see for example~\cite[Ch.~6, Exercise~8]{Mustata}. 
If $D$ is Cartier, then $f^*D$ is Cartier. To see this, assume that $\sf$ is a support function on $\Delta'$. Pick $\sigma\in\Delta$ and assume that $\phi(\sigma)\subseteq \sigma'$.
On $\sigma'\in\Delta'$, $\sf$ is defined by $\sf=\xi'$, for some $\xi'\in M'$. It follows that on $\sigma$, $\phi^* h=\phi^*\xi'\in M$. 

Next, let $D=\sum a_i D(\rho_i)$ be a $T_N$-invariant Weil divisor on $X(\Delta)$. Then $f_* D$ is a well-defined $T_{N'}$-invariant Weil divisor on $X(\Delta')$, see for example~\cite[Ch.~1.4]{F2}, and $f_*D=\sum a_in_i D(\phi(\rho_i))$, where the sum is over all $i$ such that $\phi(\rho_i)\in \Delta'$ and where $n_i\in\N^*$.
Note that the pushforward of a Cartier divisor is 
in general only $\Q$-Cartier. 
Pullback and pushforward respect linear equivalence. 

\subsection{Pullback under rational maps}
Let $N$ and $N'$ be lattices of the same rank, let $\Delta$ and $\Delta'$ be fans in $N$ and $N'$, respectively, and let $\phi:N\to N'$ be a $\Z$-linear map of maximal rank. 
Let $D$ be a $T_{N'}$-invariant Cartier divisor on $X(\Delta')$. 
In terms of the right-hand diagram of~\eqref{lyftning},
we can define the pullback of $D$ under $f$ as $f^*D:=\pi_*\tilde f^* D$. 
In fact this definition does not depend on the particular 
choice of $\widetilde\Delta$. 
Observe that $f^*D$ is the $T_N$-invariant Weil divisor 
$\sum -\sf_D(\phi (v_i)) D(\rho_i)$, where the sum is over 
the rays $\rho_i\in\Delta (1)$. 

Assume that $\Delta$ is simplicial, and let $\sf$ be a $\Delta'$-linear $\Q$-support function. Let $\phi_{\Delta\Delta'}^*h$ be the $\Delta$-linear support function defined by $(\phi_{\Delta\Delta'}^* h)(v)=h(\phi (v))$ if $v$ is a primitive vector of a ray in $\Delta$. 
In other words, $\phi_{\Delta\Delta'}^*h$ is obtained from $\phi^*h$ using 
$\Delta$-linear interpolation.
If $D$ is a $T_{N'}$-invariant $\Q$-Cartier divisor on $X(\Delta')$, and $h_D$ is the corresponding $\Q$-support function, then $f^*D$ is determined by the $\Q$-support function $\phi_{\Delta\Delta'}^*h_D$.

Let $\sigma\in\Delta$ and let $\sf$ be a $\Delta'$-linear $\Q$-support function. Assume that $\phi(\sigma)$ is contained in a cone $\sigma'\in \Delta'$. Then $\sf$ is linear on $\phi(\sigma)$, which implies that $\phi^*\sf$ is linear on $\sigma$ and
\begin{equation}\label{stammer}
\phi^*_{\Delta\Delta'}\sf|_\sigma=\phi^*\sf|_{\sigma}.
\end{equation}
In particular, if $\phi_{\Delta\Delta'}$ is regular, then $\phi_{\Delta\Delta'}^* h=\phi^* h$ for all $\Delta'$-linear $\Q$-support functions. 
This is not the case in general if $\phi_{\Delta\Delta'}$ is not regular. Assume that there are cones $\sigma\in\Delta$ and $\sigma'_1,\sigma'_2\in\Delta'$, such that $(\interior\phi(\sigma))\cap\sigma'_j\neq\emptyset$ for $j=1,2$, and moreover that $\sf|_{\sigma'_1}$ and $\sf|_{\sigma'_2}$ are not defined by the same element in $M'$. Then $\phi^*_{\Delta\Delta'}\sf\neq\phi^*\sf$; indeed $\phi_{\Delta\Delta'}^*\sf$ is linear on $\sigma$, whereas $\sf$ is not linear on $\phi (\sigma)$.

\subsection{Criteria for stability}
We can express the stability of $f:X(\Delta)\dashrightarrow X(\Delta)$ in terms of $\phi:N\to N$. 
\begin{lma}\label{kommuterar}
Assume that $N$, $N'$, and $N''$ are lattices of the same rank, that $\Delta$, $\Delta'$, and $\Delta''$ are complete simplicial fans in $N$, $N'$, and $N''$, respectively, and that 
\[
N\stackrel{\phi}{\rightarrow} N'
\stackrel{\phi'}{\rightarrow} N''
\]
are $\Z$-linear maps of maximal rank, with corresponding rational equivariant maps 
\[
X(\Delta)\stackrel{f}{\dashrightarrow} X(\Delta')
\stackrel{f'}{\dashrightarrow} X(\Delta'').
\]
Moreover, let $f^*$, $f'^*$ be the corresponding pullback maps
\[
\pic(X(\Delta''))\stackrel{f'^*}{\rightarrow} \pic(X(\Delta'))
\stackrel{f^*}{\rightarrow} \pic(X(\Delta)).
\]
For $\rho\in\Delta(1)$ let $\sigma'_\rho$ be the (unique) cone in $\Delta'$ such that $\interior\phi(\rho)\subseteq \interior \sigma'_\rho$. Then  $(f'\circ f)^*=f^*f'^*$ if and only if $\phi'(\sigma'_\rho)$ is contained in a cone in $\Delta''$ for all $\rho\in\Delta(1)$. 
\end{lma}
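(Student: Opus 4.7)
The plan is to translate both pullback maps on $\pic(X(\Delta))$ into operations on $\Delta$-linear $\Q$-support functions, using the explicit formulas from the previous subsection. Let $D$ be a $T_{N''}$-invariant (Q-)Cartier divisor on $X(\Delta'')$ with support function $h$. Then $(f'\circ f)^*[D]\in\pic(X(\Delta))$ is represented by $h_1 := (\phi'\circ\phi)^*_{\Delta\Delta''}h$, whose value on the primitive vector $v$ of any $\rho\in\Delta(1)$ is $h(\phi'\phi(v))$, while $f^*f'^*[D]$ is represented by $h_2 := \phi^*_{\Delta\Delta'}(\phi'^*_{\Delta'\Delta''}h)$, whose value on $v$ is $(\phi'^*_{\Delta'\Delta''}h)(\phi(v))$. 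Two $\Delta$-linear support functions determine the same class in $\pic(X(\Delta))$ precisely when their difference lies in $M$, so the goal is to analyze when $h_1-h_2\in M$ for every $h$.

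For the \emph{if} direction I will in fact show the stronger statement that $h_1$ and $h_2$ coincide as functions. Fix $\rho\in\Delta(1)$ with primitive vector $v$, and let $\sigma''\in\Delta''$ be a cone containing $\phi'(\sigma'_\rho)$ (which exists by hypothesis). Then $h|_{\sigma''}$ is linear, so $\phi'^*h=h\circ\phi'$ is linear on $\sigma'_\rho$, and formula~\eqref{stammer} yields $\phi'^*_{\Delta'\Delta''}h|_{\sigma'_\rho}=\phi'^*h|_{\sigma'_\rho}$. Since $\phi(v)\in\interior\sigma'_\rho$, this gives $h_2(v)=h(\phi'\phi(v))=h_1(v)$. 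As $h_1$ and $h_2$ are both $\Delta$-linear and agree on all primitive ray generators, they coincide globally; in particular the Picard classes agree.

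For the \emph{only if} direction I argue by contrapositive. Suppose there is some $\rho_0\in\Delta(1)$ with primitive generator $v_0$ such that $\phi'(\sigma'_{\rho_0})$ is not contained in any single cone of $\Delta''$. Then the subdivision of $\sigma'_{\rho_0}$ induced by $\phi'^{-1}(\Delta'')$ has more than one maximal piece, so one can choose a $\Delta''$-linear $\Q$-support function $h$ for which $h\circ\phi'$ is non-linear on $\sigma'_{\rho_0}$. For such $h$, the $\Delta'$-linear interpolation $\phi'^*_{\Delta'\Delta''}h$ restricts on $\sigma'_{\rho_0}$ to the linear function agreeing with $h\circ\phi'$ only on the rays of $\sigma'_{\rho_0}$, so at the interior point $\phi(v_0)$ it generically differs from $h(\phi'\phi(v_0))$, forcing $h_1(v_0)\neq h_2(v_0)$. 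I then arrange that the discrepancy $(h_1-h_2)(v)$ vanishes on sufficiently many other primitive ray generators of $\Delta$ to preclude its being realized as $\langle\xi,\cdot\rangle$ for any $\xi\in M$; this can be engineered either by taking $h$ to be the $\Q$-support function of a single ray divisor $D(\rho'')$ chosen in the offending region, or by passing to a projective refinement of $\Delta''$ and exploiting the strict convexity of $h\circ\phi'$ on $\sigma'_{\rho_0}$.

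The main obstacle is the \emph{only if} direction: the ``if'' part is a direct unwinding of definitions and equation~\eqref{stammer}, but for the converse one must exhibit a concrete test divisor whose discrepancy between the two pullbacks cannot be absorbed into a global linear functional on $N$. The conceptual point is that while $\phi'^*_{\Delta'\Delta''}$ replaces the actual pullback $h\circ\phi'$ by its $\Delta'$-linear interpolation, the information lost at interior points of cones $\sigma'_\rho$ that $\phi'$ fails to send into a single cone of $\Delta''$ is precisely what obstructs the compatibility $(f'\circ f)^*=f^*f'^*$.
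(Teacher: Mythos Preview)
Your ``if'' direction is correct and matches the paper's argument essentially verbatim: both unwind the definition of $\phi'^*_{\Delta'\Delta''}$ using~\eqref{stammer} to conclude $h_1=h_2$ pointwise on primitive ray generators.

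The gap is in the ``only if'' direction. You correctly identify the obstacle: one must produce a $\Delta''$-linear $h$ for which $h_1-h_2$ is \emph{not} the restriction to ray generators of any $\xi\in M_\Q$. But your two proposed mechanisms for forcing this are not justified. The support function $h$ is built from data in $\Delta''$, whereas the linearity test for $h_1-h_2$ involves the ray generators of $\Delta$; there is no evident way to ``arrange that the discrepancy vanishes on sufficiently many other primitive ray generators of $\Delta$'' by choosing $h$, since a change in $h$ propagates through $\phi'^*_{\Delta'\Delta''}$ and $\phi^*_{\Delta\Delta'}$ in a way you do not control on the $\Delta$ side. Likewise the appeal to ``generic'' nonlinearity at $\phi(v_0)$ and to ``strict convexity'' after a projective refinement of $\Delta''$ are gestures, not arguments: refining $\Delta''$ changes $\pic(X(\Delta''))$ and hence the statement to be proved.

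The paper handles this differently and in two steps. First, it invokes an external result of Lin (\cite[Prop.~5.5]{Lin}) asserting that the \emph{a priori} weaker condition ``$h_1-h_2\in M_\Q$ for all $h$'' is in fact equivalent to the stronger ``$h_1=h_2$ for all $h$''. This collapses your obstacle entirely: one now only needs to exhibit a single $h$ and a single primitive vector $v$ with $h_1(v)\neq h_2(v)$. Second, the paper constructs such an $h$ explicitly: it finds two maximal cones $\sigma''_1,\sigma''_2\in\Delta''$ meeting $\phi'(\sigma'_{\rho_0})$ in full dimension, picks a ray $\rho''_1$ lying in one but not the other, and takes $h$ to be the ``indicator'' support function equal to $1$ on the primitive vector of $\rho''_1$ and $0$ on all other primitive ray vectors. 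This $h$ is visibly nonlinear on $\phi'(\sigma'_{\rho_0})$, and a direct inequality (not a genericity argument) shows $\phi'^*_{\Delta'\Delta''}h(\phi(v_0))\neq h(\phi'\phi(v_0))$. Without Lin's reduction, or some substitute for it, your argument does not close.
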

\begin{proof}
  Note that $f^*f'^*=(f'\circ f)^*$ on $\pic(X(\Delta'')$ 
  if and only if, for every
  $\Delta''$-linear support function $h$, the function
 $\phi_{\Delta\Delta'}^*\phi_{\Delta'\Delta''}^*h-(\phi'\circ\phi)_{\Delta\Delta''}^*h$
  on $N_\Q$ is linear, that is, belongs to $M_\Q$.
  However, this turns 
  out\footnote{We thank Jan-Li Lin for pointing  this out.} 
  to be equivalent to the
  (\textit{a priori} stronger) condition
  $\phi_{\Delta\Delta'}^*\phi_{\Delta'\Delta''}^*h=(\phi'\circ\phi)_{\Delta\Delta''}^*h$
 for every $\Delta''$-linear support function $h$, 
 see~\cite[Prop.~5.5]{Lin}.
 Further, the latter condition can be written 
 $\phi_{\Delta\Delta'}^*\phi_{\Delta'\Delta''}^*h(v)=\phi^*\phi'^*h(v)$
 for all primitive vectors of rays of $\Delta$. 
 
 Let $\rho$ be a ray of $\Delta$ with corresponding 
 primitive vector $v$ and let $\sigma_\rho'\in\Delta'$ be the (unique)
 cone for which $\phi(v)\in\interior\sigma_\rho'$.

 First assume there is a cone $\sigma''\in\Delta''$
 such that $\phi'(\sigma_\rho')\subseteq\sigma''$. 
 and let $\sf$ be a $\Delta''$-linear support function. Then 
 \begin{equation*}
   \phi_{\Delta\Delta'}^*\phi'^*_{\Delta'\Delta''}\sf(v)
   =\phi^*(\phi'^*_{\Delta'\Delta''}\sf)|_{\sigma_\rho'}(v)
   =\phi^*(\phi'^*\sf)|_{\sigma_\rho'}(v)
   =\phi^*\phi'^*\sf(v); 
 \end{equation*}
 here we have used that $v$ is a primitive vector 
 of $\rho$ for the first equality and \eqref{stammer}
 for the second equality. 
 Hence the ``if''-direction of the lemma follows. 
 
 Now assume $\phi'(\sigma_\rho')$ is not contained 
 in any cone in $\Delta''$.  It follows that there are 
 cones $\sigma_1'', \sigma_2''\in\Delta''$, 
 such that 
 $\dim (\phi'(\sigma'_\rho)\cap \sigma''_j)=\dim\sigma'_\rho$ 
 for $j=1,2$. 
 Note that then $\sigma''_1\not\subseteq\sigma_2''$. 
 Pick $\rho''_1\in\Delta''(1)$, such that $\rho''_1$ 
 is a face of $\sigma''_1$ but not of $\sigma''_2$, 
 and let $v_1''$ be the corresponding primitive vector. 
 Let $\sf$ be the $\Delta''$-linear
 $\Q$-support function that is determined by 
 $\sf(v_1'')=1$, but $\sf(v_i'')=0$ for all other 
 primitive vectors of rays in $\Delta''$. 
 Then $\sf\not\equiv 0$ on $\sigma''_2$, 
 but $h\equiv 0$ on $\sigma''_1$, which implies that  
 $\phi'^*_{\Delta'\Delta''}\sf$ is linear on $\sigma'_\rho$, 
 whereas $\phi'^*\sf$ is not. In particular,
 $\phi'^*_{\Delta\Delta'}\sf(\phi(v)) > \phi'^*\sf(\phi(v))$, 
 since $\phi(v)\in\interior\sigma'_\rho$.
 Hence 
 $\phi_{\Delta\Delta'}^*\phi'^*_{\Delta'\Delta''}h(v)\neq
 \phi^*\phi'^*h(v)$,
 proving the ``only if''-direction of the lemma.
\end{proof}

The following results are immediate consequences of Lemma~\ref{kommuterar}.

\begin{cor}\label{stablma}
Assume that $\Delta$ is a complete simplicial fan in $N$ and that 
$\phi:N\to N$ is a $\Z$-linear map, with corresponding rational equivariant map 
$f:X(\Delta)\dashrightarrow X(\Delta)$. 

Then $f$ is $1$-stable if and only if all cones $\sigma$ in $\Delta$ for which there is a ray $\rho\in\Delta(1)$ such that $\interior \phi^n(\rho)\subseteq\interior\sigma$ for some $n$, satisfy that $\phi^{n'}(\sigma)$ is contained in a cone in $\Delta$ for all $n'\in\N$.
\end{cor}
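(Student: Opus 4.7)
The plan is to deduce the corollary from Lemma~\ref{kommuterar} applied to pairs of iterates of $\phi$. Throughout, I implicitly assume that $\phi$ has maximal rank, so that the lemma applies to each pair $(\phi^a,\phi^b)$.

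First, I would rewrite 1-stability in a more convenient form. The condition $(f^n)^*=(f^*)^n$ on $\pic(X(\Delta))$ for all $n\ge 1$ is equivalent, by a straightforward induction on $n$, to the condition $(f^{a+b})^*=(f^a)^*\circ (f^b)^*$ for all $a,b\ge 1$. Since $f^{a+b}=f^b\circ f^a$, Lemma~\ref{kommuterar} (applied with $N=N'=N''$, $\Delta=\Delta'=\Delta''$, and the pair $(\phi^a,\phi^b)$) translates this into the combinatorial condition that, for every $\rho\in\Delta(1)$, the image $\phi^b(\sigma^{(a)}_\rho)$ is contained in some cone of $\Delta$, where $\sigma^{(a)}_\rho$ denotes the unique cone of $\Delta$ whose relative interior contains $\interior\phi^a(\rho)$.

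Finally, I would match this up with the statement of the corollary. The cones $\sigma\in\Delta$ for which there exist $\rho\in\Delta(1)$ and $n\ge 1$ with $\interior\phi^n(\rho)\subseteq\interior\sigma$ are precisely those of the form $\sigma^{(n)}_\rho$, so the combinatorial condition above is equivalent to demanding that $\phi^{n'}(\sigma)$ lie in a cone of $\Delta$ for all $n'\ge 1$ and all such $\sigma$. It remains only to verify that the boundary cases in the corollary's statement are automatic: for $n'=0$ one has $\phi^0(\sigma)=\sigma\in\Delta$; and for $n=0$ the hypothesis $\interior\rho\subseteq\interior\sigma$ forces $\sigma=\rho$ by disjointness of relative interiors of cones in a fan, after which $\phi^{n'}(\rho)$ is a ray (or the origin), which lies in a single cone of $\Delta$ by completeness. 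Thus the equivalence of the corollary follows, and I anticipate no serious obstacle beyond a careful bookkeeping of these trivial cases.
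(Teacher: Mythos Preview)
Your proof is correct and follows exactly the approach implied by the paper, which simply declares the corollary an ``immediate consequence'' of Lemma~\ref{kommuterar} without further detail. Your reduction of 1-stability to the condition $(f^{a+b})^*=(f^a)^*(f^b)^*$ for all $a,b\ge1$, followed by an application of the lemma to each pair $(\phi^a,\phi^b)$ and the careful handling of the boundary cases $n=0$ and $n'=0$, is precisely the bookkeeping needed to justify the word ``immediate.''
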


When $\phi:N\to N$ satisfies the assumption in Corollary~\ref{stablma} we say that it is \emph{torically stable} on $\Delta$. 

\begin{cor}\label{invkoner}
  Assume that $\Delta$ is a complete simplicial fan and 
  $\phi:N\to N$ a $\Z$-linear map.
  Assume there is a collection $\cS\subseteq\Delta$ such that 
  $\phi(\sigma)\subseteq\sigma$ for 
  $\sigma\in\cS$ and such that each ray in $\Delta$ is 
  either mapped onto another ray in $\Delta$ or mapped 
  into one of the cones in $\cS$.
  Then $f:X(\Delta)\dashrightarrow X(\Delta)$ is 1-stable. 
\end{cor}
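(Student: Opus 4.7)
The plan is to reduce everything to the criterion in Corollary~\ref{stablma}. Given $\sigma\in\Delta$ with $\interior\phi^n(\rho)\subseteq\interior\sigma$ for some $\rho\in\Delta(1)$ and some $n\geq 0$, I need to show $\phi^{n'}(\sigma)$ is contained in a cone of $\Delta$ for every $n'\in\N$.

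First I would unpack the orbit of $\rho$. The hypothesis on $\cS$ gives at each iterate a dichotomy: $\phi^k(\rho)$ is either a ray in $\Delta(1)$ or lies inside some cone of $\cS$. Since each $\tau\in\cS$ satisfies $\phi(\tau)\subseteq\tau$, once the orbit enters an $\cS$-cone it remains there forever. Hence either every $\phi^k(\rho)$ is a ray of $\Delta$, or there is a smallest $k_0$ and a fixed $\tau_0\in\cS$ with $\phi^k(\rho)\subseteq\tau_0$ for all $k\geq k_0$.

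Next I would split on which alternative occurs for the prescribed $n$. If $\phi^n(\rho)\in\Delta(1)$, then $\interior\phi^n(\rho)\subseteq\interior\sigma$ together with the disjointness of relative interiors of cones in the fan $\Delta$ forces $\sigma=\phi^n(\rho)$; the orbit description above then shows that every $\phi^{n'}(\sigma)=\phi^{n+n'}(\rho)$ is either a ray in $\Delta$ or contained in a cone of $\cS\subseteq\Delta$, as required. If instead $\phi^n(\rho)\subseteq\tau_0$ for some $\tau_0\in\cS$, let $v$ be the primitive generator of $\rho$, so $\phi^n(v)\in\tau_0\cap\interior\sigma$. Write $\tau_0$ as the disjoint union of the relative interiors of its faces; then $\phi^n(v)$ lies in the relative interior of some face of $\tau_0$, and disjointness of relative interiors in $\Delta$ forces $\sigma$ to be this face. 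Hence $\sigma\subseteq\tau_0$, and invariance of $\tau_0$ yields $\phi^{n'}(\sigma)\subseteq\tau_0\in\Delta$ for all $n'\geq 0$.

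The only step I expect to require care is identifying $\sigma$ as a face of $\tau_0$ in the second case; a priori $\sigma$ could be any cone of $\Delta$ whose interior meets $\tau_0$, and one has to use both the fan axioms for $\Delta$ and the decomposition of $\tau_0$ into relative interiors of its faces to conclude $\sigma\subseteq\tau_0$. Once this is settled, invoking $\phi(\tau_0)\subseteq\tau_0$ closes the argument, and the corollary follows from Corollary~\ref{stablma}.
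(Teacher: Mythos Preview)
Your argument is correct and follows exactly the route the paper intends: the corollary is stated as an immediate consequence of Corollary~\ref{stablma}, and you have simply spelled out the verification, including the one nontrivial point that $\sigma$ must be a face of $\tau_0$ via the disjointness of relative interiors in a fan. Nothing further is needed.
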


\begin{remark}\label{teckenrmk}
  In Corollary~\ref{invkoner} it suffices to require that $\phi$
  maps every cone in $\cS$ into some other cone in $\cS$.
\end{remark}

In Theorems~A and~B we require that the eigenvalues of $\phi$ be
positive. The reason is that when some eigenvalues are negative,
it may be impossible to find invariant cones:
\begin{prop}\label{spar}
  Let $\phi:N\to N$ be a $\Z$-linear map, where $N\cong \Z^m$. 
  If $\sigma$ is a simplicial cone of dimension $m$ such that 
  $\phi(\sigma)\subseteq\sigma$, then the trace 
  of $\phi$, i.e.,\ the sum of the eigenvalues, must be nonnegative.
\end{prop}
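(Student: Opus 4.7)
The plan is to exploit the fact that the trace of a linear operator is basis-independent, and choose a basis coming from the generators of the cone itself.

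Since $\sigma$ is an $m$-dimensional simplicial cone in $N_\R$, it is generated by exactly $m$ vectors $v_1,\dots,v_m\in N$, and these vectors are linearly independent; hence they form an $\R$-basis of $N_\R$. Because $\phi(\sigma)\subseteq\sigma$ and each $v_i\in\sigma$, each image $\phi(v_i)$ lies in $\sigma$ and can therefore be written as a nonnegative combination
\begin{equation*}
\phi(v_i)=\sum_{j=1}^m c_{ji}v_j,\qquad c_{ji}\geq 0.
\end{equation*}

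In the basis $\{v_1,\dots,v_m\}$ the matrix of $\phi$ is $C=(c_{ji})$, whose diagonal entries $c_{ii}$ are nonnegative. Since the trace is invariant under change of basis,
\begin{equation*}
\tr(\phi)=\sum_{i=1}^m c_{ii}\geq 0,
\end{equation*}
which equals the sum of the eigenvalues of $\phi$ (counted with multiplicity). This proves the proposition.

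There is no real obstacle here; the only subtle point to confirm is that a simplicial cone of dimension $m$ in an $m$-dimensional lattice is generated by $m$ linearly independent vectors, which is exactly the definition of simplicial recalled in Section~\ref{sec:toric}. The hypothesis that $\sigma$ has dimension equal to $\mathrm{rank}\,N$ is used precisely so that these generators form a basis of $N_\R$, allowing the trace to be computed in this basis.
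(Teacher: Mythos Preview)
Your proof is correct and takes essentially the same approach as the paper: write $\phi$ in the basis $v_1,\dots,v_m$ generating $\sigma$, observe that $\phi(\sigma)\subseteq\sigma$ forces the matrix entries (in particular the diagonal ones) to be nonnegative, and conclude $\tr\phi\ge0$. The paper phrases the computation of the diagonal entries via determinants, but the underlying idea is identical; one tiny quibble is that the proposition does not assume $\sigma$ is rational, so you should take $v_i\in N_\R$ rather than $N$, which changes nothing in the argument.
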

This result is presumably known, but we include a proof
for completeness.
\begin{proof}
  Let $v_1,\dots,v_m$ be a basis for $N_\R$ such that
  $\sigma=\sum_{j=1}^m\R_+ v_j$. 
  We may assume $\det(v_1,\dots,v_m)=1$.
  Since $\phi(\sigma)\subseteq\sigma$, we must have
  \begin{equation}\label{e204}
    \det(v_1,\dots,v_{j-1},\phi(v_j),v_{j+1},\dots,v_m)\ge0
    \quad\text{for $1\le j\le m$}.
  \end{equation}
  As the left hand side of~\eqref{e204} equals the $j$th
  diagonal element in the matrix of $\phi$ in the chosen basis,
  the lemma follows by summing~\eqref{e204} over $j$.
\end{proof}
%
%
%
%
%
%
\section{Smooth stabilization in dimension two}\label{sec:dimtwo}
We now look at two-dimensional monomial mappings. 
Recall that a $\Z$-linear map
$\phi:N\to N$ is torically stable
on a fan $\Delta$ if it satisfies the condition in Corollary~\ref{stablma}.

Note that the 
eigenvalues $\mu_1$, $\mu_2$ are either both real or complex 
conjugates of each other.
When they are real, they are either both integers or
both irrational.

In~\cite{Favre}, Favre gave a complete characterization of 
when we can make $\phi$ torically stable 
on a possibly irregular fan, see Theorem~C' in the introduction.
For the rest of this section we analyze whether we can make 
$\phi$ torically stable on a \emph{regular} fan.
We will prove Theorem~C and give several examples
showing that this result is essentially sharp. 
We also recover Theorem~C'.
The main new ideas are contained in Section~\ref{sec:S301}.
\begin{remark}\label{R201}
  The statement in~\cite[Th\'eor\`eme Principal]{Favre} 
  does deal with smooth toric varieties, but there is 
  a gap in the proof. Suppose 
  $\mu_2/\mu_1$ is not of the form $e^{i\pi\theta}$ with 
  $\theta\in\R\setminus\Q$. What Favre proves is that 
  we can find a (not necessarily regular)
  refinement $\Delta'$ of $\Delta$ on which
  $\phi$ is torically stable.
  He then asserts that $\Delta'$ becomes a regular fan by
  passing to a sublattice $N'\subseteq N$. However, 
  this last step does not work in general, 
  see~\cite[\S2.2, p.36]{F1}.
\end{remark}
%
%
%
%
\subsection{Basic facts on fans in dimension two}\label{facts2}
We need a few basic results about refinements of fans
in dimension two.
Let us call a fan $\Delta$  \emph{symmetric} 
if $-\sigma\in\Delta$ for every cone $\sigma\in\Delta$.

First, as described in~\cite[Section~2.6]{F1} there
exists a canonical procedure for making an irregular fan
regular. In fact, we have
\begin{lma}\label{lma:minrefine}
  Every fan $\Delta$ admits a regular refinement $\Delta'$ such that:
  \begin{itemize}
  \item[(i)]
    any regular cone in $\Delta$ is also a cone 
    in $\Delta'$;
  \item[(ii)]
    if $\Delta$ is symmetric, then so is $\Delta'$.
  \end{itemize}
\end{lma}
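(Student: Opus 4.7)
The plan is to apply the standard canonical desingularization procedure for two-dimensional fans, cone by cone, and then verify that it has both listed properties essentially for free.

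First I would recall the Hirzebruch--Jung continued fraction procedure described in~\cite[\S2.6]{F1}. Given a strongly convex rational cone $\sigma\subseteq N_\R$ of dimension two with primitive generators $v,w$, one may perform a change of basis so that $v=(1,0)$ and $w=(-b,a)$ with $0<b<a$, $\gcd(a,b)=1$. If the cone is not regular, one writes $a/b$ as a continued fraction and inserts the finitely many primitive lattice vectors prescribed by that expansion as new rays in the interior of $\sigma$. This subdivides $\sigma$ into finitely many regular two-dimensional subcones whose only rays on the boundary $\partial\sigma$ are $\R_+v$ and $\R_+w$ themselves; all new rays lie in $\interior\sigma$. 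If $\sigma$ is already regular, no subdivision is needed and we leave $\sigma$ alone. Let $\Delta'$ be the union of the refinements obtained in this way over all maximal cones of $\Delta$, together with all the rays of $\Delta$.

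To see that $\Delta'$ is indeed a fan, I observe that since all new rays are added in the relative interior of the respective two-dimensional cones, two adjacent maximal cones of $\Delta$ meet along a ray that belongs to the original fan, and the two separate refinements agree (trivially) on that shared ray. Hence the relative interiors of the cones in $\Delta'$ are disjoint, $\Delta'$ refines $\Delta$, and $\Delta'$ is regular by construction. Property~(i) is immediate, since a regular cone $\sigma\in\Delta$ is not touched by the procedure: if $\dim\sigma=2$, the continued fraction algorithm leaves $\sigma$ alone, and the one-dimensional and zero-dimensional regular cones appear as faces in $\Delta'$ automatically.

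For property~(ii), I would use that the continued fraction refinement of $\sigma$ depends only on $\sigma$ itself (more precisely on the pair of primitive generators), not on the surrounding fan. Under the involution $v\mapsto -v$ of $N_\R$, the primitive generators of $-\sigma$ are the negatives of those of $\sigma$, so the algorithm applied to $-\sigma$ produces precisely the cones $-\tau$ for $\tau$ a cone in the refinement of $\sigma$. Thus if $\Delta$ is symmetric, then for every refined maximal cone $\sigma\in\Delta$ the opposite cone $-\sigma$ is also in $\Delta$ and is refined into the negatives of the same pieces, so $\Delta'$ is symmetric as well.

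The only potential obstacle would have been compatibility of the refinements across shared boundaries, but in dimension two two distinct maximal cones meet in a ray, and the continued fraction procedure adds no new rays on $\partial\sigma$; hence there is nothing to reconcile. I therefore expect the argument to be short, with the content concentrated in citing the two-dimensional desingularization of~\cite[\S2.6]{F1} and recording its naturality under $v\mapsto -v$.
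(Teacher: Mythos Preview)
Your argument is correct and follows essentially the same route as the paper: both invoke the standard two-dimensional desingularization from~\cite[\S2.6]{F1}, subdivide only the irregular two-dimensional cones by inserting interior rays, and observe that regular cones are untouched. The only cosmetic difference is that you use the full Hirzebruch--Jung continued fraction refinement at once (which is canonical and hence automatically commutes with $v\mapsto -v$), whereas the paper phrases it as an iterative procedure---pick any interior lattice point, subdivide, repeat---and then handles~(ii) by explicitly subdividing $\tau$ and $-\tau$ in tandem.
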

Both the lemma and its proof  below are valid in any dimension.
\begin{proof}[Sketch of proof]
  The construction of $\Delta'$ proceeds inductively as follows.
  Pick an irregular two-dimensional cone $\tau$ in $\Delta$,
  let $\sigma_1,\sigma_2$ be its one-dimensional faces
  and let $v_j\in\sigma_j\cap N$ be the associated primitive vectors.
  Since $\tau$ is irregular, there exists 
  $t_i\in(0,1)\cap\Q$, $i=1,2$, 
  such that $v:=t_1v_1+t_2v_2\in N$.
  Let $\sigma=\R_+v$ and let
  $\tau_i$, $i=1,2$ be the two-dimensional cones
  whose one-dimensional faces are $\sigma_i$ and $\sigma$.
  Applying this procedure finitely many times yields a regular
  fan $\Delta'$. 
  If $\Delta$ is symmetric, then we may simultaneously subdivide
  $\tau$ and $-\tau$ into $\tau_1$, $\tau_2$ and
  $-\tau_1$, $-\tau_2$, respectively. In this way, $\Delta'$ will
  stay symmetric.
\end{proof}
Second, we need to refine fans that may already be regular.
Let $\tau$ be a regular two-dimensional cone
and $\sigma_1,\sigma_2$ its one-dimensional faces with corresponding primitive vectors $v_1$ and $v_2$. Then $v_1,v_2$ generate $N$. 
Let $\sigma=\R_+(v_1+v_2)$ and let
$\tau_i$, $i=1,2$ be the two-dimensional cones
whose one-dimensional faces are $\sigma_i$ and $\sigma$.
The \emph{barycentric subdivision} of $\tau$ consists of
replacing $\tau$ by $\tau_1$ and $\tau_2$.
\begin{remark}
  Both the barycentric subdivision and the closely
  related procedure in the proof of Lemma~\ref{lma:minrefine}
  are special cases of Lemma~\ref{barvinok}.
\end{remark}
\begin{lma}\label{lma:subdivide2}
  Let $(\tau_n)_{n\ge0}$ be a sequence of 
  regular two-dimensional cones such that 
  $\tau_{n+1}$ is one of the two cones obtained
  by barycentric subdivision of $\tau_{n}$, $n\ge0$.
  Then $\bigcap_{n\ge0}\tau_n=\R_+w$ for some $w\in N_\R$. 
\end{lma}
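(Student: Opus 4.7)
The plan is to show that the angular opening $\theta_n$ of $\tau_n$ tends to zero; combined with the nesting $\tau_{n+1}\subseteq\tau_n$, this will force $\bigcap_n \tau_n$ to be a single ray.

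Let $v_n^{(1)}, v_n^{(2)}\in N$ be the primitive vectors of the two rays of $\tau_n$. Since $\tau_n$ is regular, these form a $\Z$-basis of $N$, so the parallelogram they span has area equal to the covolume of $N$, independent of $n$. Fixing any Euclidean norm $\|\cdot\|$ on $N_\R$, this gives $\sin\theta_n = C/(\|v_n^{(1)}\|\cdot\|v_n^{(2)}\|)$ for some constant $C>0$. Because the $\theta_n$ are non-increasing and $\theta_0<\pi$ by strict convexity of $\tau_0$, it will suffice to show $\|v_n^{(1)}\|\cdot\|v_n^{(2)}\|\to\infty$.

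This is where the dynamics enter. The definition of barycentric subdivision forces exactly one of $v_{n+1}^{(1)}, v_{n+1}^{(2)}$ to equal $v_n^{(1)} + v_n^{(2)}$, while the other is inherited from $\{v_n^{(1)}, v_n^{(2)}\}$. I will change coordinates so that $v_0^{(1)}, v_0^{(2)}$ becomes the standard basis of $\R^2$; then each $v_n^{(i)}$ lies in $\tau_0$, the first quadrant, and so has nonnegative integer entries summing to some integer $a_n^{(i)}\geq 1$. In each step $(a_n^{(1)}, a_n^{(2)})$ is replaced by either $(a_n^{(1)}, a_n^{(1)}+a_n^{(2)})$ or $(a_n^{(1)}+a_n^{(2)}, a_n^{(2)})$, so $a_n^{(1)}+a_n^{(2)}$ strictly increases at each step. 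Then $a_n^{(1)} a_n^{(2)}\geq\max(a_n^{(1)}, a_n^{(2)})\geq\tfrac12(a_n^{(1)}+a_n^{(2)})\to\infty$, and by equivalence of norms in $\R^2$, $\|v_n^{(1)}\|\cdot\|v_n^{(2)}\|\to\infty$, as desired.

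The remainder is routine: the nested closed arcs $\tau_n\cap S^1$ have lengths tending to zero, so by compactness their intersection is a single point, giving $\bigcap_n\tau_n = \R_+ w$ for some $w\in N_\R$. The one subtlety worth flagging is that $\sin\theta_n\to 0$ together with $\theta_n\in(0,\theta_0]$ and $\theta_0<\pi$ genuinely forces $\theta_n\to 0$ rather than $\theta_n\to\pi$; this is where the monotonicity of $(\theta_n)$ is essential.
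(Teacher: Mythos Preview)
Your proof is correct and takes essentially the same approach as the paper: both express the primitive generators of $\tau_n$ in the basis coming from $\tau_0$, observe that the (nonnegative integer) coordinates grow without bound, and conclude that the cones shrink to a ray. The paper records this tersely as ``$\max\{p_n,q_n\}\to\infty$ or $\max\{p'_n,q'_n\}\to\infty$'' and leaves the passage to a single ray implicit, whereas you spell out that step via the area/sine formula $\sin\theta_n = C/(\|v_n^{(1)}\|\cdot\|v_n^{(2)}\|)$ and a compactness argument on $S^1$; this is a helpful elaboration but not a genuinely different route.
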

\begin{proof}
  Since $\tau_0$ is regular, 
  $\tau_0=\R_+v+\R_+v'$, where $v, v'$ generate $N$.
  Write $\tau_n=\R_+v_n+\R_+v'_n$, where $v_n, v'_n$ 
  generate $N$. We can assume  
  $v_{n+1}=v_n+v'_n$ and $v'_{n+1}\in\{v_n,v'_n\}$.
  Inductively, we see that 
  $v_n=p_nv+q_nv'$ and $v'_n=p'_nv+q'_nv'$,
  where $p_n,q_n,p'_n,q'_n\ge0$ and 
  $|p_nq'_n-p'_nq_n|=1$. 
  The lemma follows since 
  $\max\{p_n,q_n\}\to\infty$ or 
  $\max\{p'_n,q'_n\}\to\infty$ as $n\to\infty$.
\end{proof}
\begin{lma}\label{lma:subdivide1}
  Consider regular two-dimensional cones 
  $\tau,\tau_0\subseteq N_\R$ 
  such that $\tau\subsetneq\tau_0$. 
  Then $\tau$ is obtained from $\tau_0$ 
  by performing finitely many barycentric subdivisions.
\end{lma}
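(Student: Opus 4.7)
The plan is to iteratively perform barycentric subdivisions of $\tau_0$, at each stage keeping the subcone that contains $\tau$, and to show via Lemma~\ref{lma:subdivide2} that this procedure must terminate with $\tau$ itself in finitely many steps.

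The key step is the following claim: if $\tau\subsetneq\tau_0$ is a regular two-dimensional subcone of the regular cone $\tau_0=\R_+v_1+\R_+v_2$, then $\tau$ is contained in one of the two cones $\tau^L=\R_+v_1+\R_+(v_1+v_2)$ or $\tau^R=\R_+(v_1+v_2)+\R_+v_2$ obtained by barycentric subdivision of $\tau_0$. To prove this, let $w_1,w_2$ be the primitive generators of $\tau$ and write $w_i=a_iv_1+b_iv_2$ with $a_i,b_i\in\Z_{\geq 0}$; after possibly swapping the $w_i$ we arrange $a_1b_2-a_2b_1=1$, which is allowed since this determinant has absolute value $1$ by regularity of $\tau$. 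Rewriting $w_i=(a_i-b_i)v_1+b_i(v_1+v_2)=a_i(v_1+v_2)+(b_i-a_i)v_2$ shows that $w_i\in\tau^L$ iff $a_i\geq b_i$ and $w_i\in\tau^R$ iff $a_i\leq b_i$, so $\tau$ failing to lie in either subcone would force the strict inequalities $a_1>b_1$ and $b_2>a_2$ (the opposite configuration contradicts the orientation $a_1b_2-a_2b_1=1>0$). But then
\[
1=a_1b_2-a_2b_1\geq (b_1+1)(a_2+1)-a_2b_1=a_2+b_1+1,
\]
which forces $a_2=b_1=0$ and, by primitivity, $a_1=b_2=1$, so $\tau=\tau_0$, contradicting the strict inclusion.

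Given the claim, I iterate: if $\tau$ equals $\tau^L$ or $\tau^R$ we are done; otherwise $\tau$ is strictly contained in one of them, and we apply the claim again to that cone. If this process never terminated we would obtain an infinite sequence of regular two-dimensional cones $\tau_0\supsetneq\tau_1\supsetneq\tau_2\supsetneq\cdots$, each a barycentric subdivision of the previous, with $\tau\subseteq\tau_n$ for every $n$. By Lemma~\ref{lma:subdivide2}, $\bigcap_n\tau_n$ is a single ray, contradicting $\dim\tau=2$. The main obstacle is the arithmetic claim above; it relies essentially on both $\tau$ and $\tau_0$ being regular, so that the determinantal identity $a_1b_2-a_2b_1=1$ rules out proper straddling of the barycenter ray $\R_+(v_1+v_2)$ by $\tau$.
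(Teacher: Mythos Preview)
Your proof is correct and follows essentially the same approach as the paper's: both reduce via Lemma~\ref{lma:subdivide2} to the claim that $\tau$ lies in one of the two barycentric halves of $\tau_0$, and both prove this claim by the same determinantal inequality $a_1b_2-a_2b_1\ge a_2+b_1+1$ (the paper writes $p_i,q_i$ for your $a_i,b_i$ and phrases the conclusion as $v_1+v_2\notin\interior\tau$). The only cosmetic difference is that the paper first normalizes so that $p_1>p_2$, whereas you handle the two straddling configurations by an orientation argument; the arithmetic is identical.
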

\begin{proof}
  By Lemma~\ref{lma:subdivide2} and induction
  it suffices to show that 
  $\tau$ is contained in one of the 
  two-dimensional cones obtained by 
  barycentric subdivision of $\tau_0$.

  Write $\tau_0=\R_+v_1+\R_+v_2$ and 
  $\tau=\R_+w_1+\R_+w_2$ where $v_1,v_2$ and 
  $w_1,w_2$ are generators of $N$. 
  As $\tau$ is regular and $\tau\subseteq\tau_0$ 
  we may assume $w_i=p_iv_1+q_iv_2$, where 
  $p_i,q_i\ge0$, $i=1,2$, $p_1q_2-p_2q_1=1$, $p_1>p_2$, and thus $q_2>q_1$. 

It suffices to prove that $v_1+v_2\not\in\interior\tau$ unless $\tau=\tau_0$. 
Assume that $v_1+v_2\in\interior\tau$.  Then 
  $p_1>q_1$ and $p_2<q_2$, which implies that $p_1q_2-p_2q_1 \geq q_1+p_2+1$. It follows that $q_1=p_2=0$, and hence $\tau=\tau_0$. 
\end{proof}
\begin{cor}\label{cor:regular}
  Let $\tau_n$, $n\ge 0$ be regular two-dimensional cones such that 
  $\tau_{n+1}\subsetneq\tau_n$ for all $n$.
  Then $\bigcap_{n\ge0}\tau_n=\R_+w$ for some $w\in N_\R$. 
\end{cor}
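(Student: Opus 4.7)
The plan is to reduce Corollary~\ref{cor:regular} to Lemma~\ref{lma:subdivide2} by interpolating the given sequence $(\tau_n)$ with the finite sequences of barycentric subdivisions provided by Lemma~\ref{lma:subdivide1}.

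First I would apply Lemma~\ref{lma:subdivide1} to each adjacent pair $\tau_{n+1}\subsetneq\tau_n$. It yields a finite chain of regular two-dimensional cones
\[
\tau_n=\tau_n^{(0)}\supsetneq\tau_n^{(1)}\supsetneq\dots\supsetneq\tau_n^{(k_n)}=\tau_{n+1},
\]
in which each $\tau_n^{(j+1)}$ is one of the two cones obtained from $\tau_n^{(j)}$ by barycentric subdivision. Since $\tau_{n+1}\subsetneq\tau_n$ strictly, we have $k_n\geq 1$ for every $n$.

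Next I would concatenate these chains into a single infinite sequence $(\tau'_k)_{k\geq 0}$ of regular two-dimensional cones, with $\tau'_{k+1}$ obtained from $\tau'_k$ by a barycentric subdivision at each step. By construction, $(\tau_n)_{n\geq 0}$ appears as a subsequence of $(\tau'_k)_{k\geq 0}$, and in particular
\[
\bigcap_{n\geq 0}\tau_n=\bigcap_{k\geq 0}\tau'_k.
\]
Applying Lemma~\ref{lma:subdivide2} to $(\tau'_k)$ gives $\bigcap_{k\geq 0}\tau'_k=\R_+w$ for some $w\in N_\R$, which completes the argument.

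There is essentially no obstacle here: the two preceding lemmas do all the real work, and the only point requiring a word is that the strictness $\tau_{n+1}\subsetneq\tau_n$ guarantees that the interpolated sequence $(\tau'_k)$ is genuinely infinite, so that Lemma~\ref{lma:subdivide2} applies as stated.
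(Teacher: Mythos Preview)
Your proof is correct and is exactly the argument the paper has in mind: the corollary is stated without proof because it follows immediately by interpolating via Lemma~\ref{lma:subdivide1} and then applying Lemma~\ref{lma:subdivide2}, just as you do. Your remark that strictness of $\tau_{n+1}\subsetneq\tau_n$ ensures $k_n\ge1$, so that the concatenated sequence is infinite, is the only point worth making explicit, and you have.
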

%
%
%
%
\subsection{The case $|\mu_1|>|\mu_2|$}
We consider first the case when $|\mu_1|>|\mu_2|$. Then the $\mu_i$ are real 
and the corresponding eigenspaces $E_i\subseteq N_\R$ are one-dimensional. Either $\mu_1,\mu_2\in\Z$ or $\mu_1,\mu_2\notin \Q$. 
As $n\to \infty$ we have $\phi^n(v)\to E_1$ for any $v\in N_\R\setminus E_2$
and $\phi^{-n}(v)\to E_2$ for any $v\in N_\R\setminus E_1$.

We will use the following criterion for making $\phi$ torically stable.
\begin{lma}\label{lma:criterion1}
  Suppose that $\Delta$ is a regular fan 
  and define $U_i\subseteq N_\R$ as the 
  union of all cones in $\Delta$ that 
  intersect $E_i\setminus\{0\}$ for $i=1,2$.
  Assume that $\phi(U_1)\subseteq U_1$ and $\phi^{-1}(U_2)\subseteq U_2$.
  Then there exists a regular refinement $\Delta'$ of $\Delta$ 
  on which $\phi$ is torically stable. 
  If $\Delta$ is symmetric, then we can choose 
  $\Delta'$ symmetric.

  Conversely, suppose $\phi$ is torically stable with respect to 
  a regular fan $\Delta$ and define $U_i$ as above.
  Then $\phi(U_1)\subseteq U_1$ and $\phi^{-1}(U_2)\subseteq U_2$.
\end{lma}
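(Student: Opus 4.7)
The plan is to use the attracting-repelling dynamics of $\phi$ on the projectivization of $N_\R$, together with the subdivision tools from Section~\ref{facts2}, to build forward-invariant cones around $E_1$, backward-invariant cones around $E_2$, and a regular refinement $\Delta'$ of $\Delta$ incorporating these cones and the (finitely many) transient orbit rays.

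\emph{Forward direction.} Write $U_i^\pm$ for the union of cones of $\Delta$ meeting $E_i^\pm$, so $U_i=U_i^+\cup U_i^-$. The hypothesis $\phi(U_1)\subseteq U_1$, combined with $\phi(E_1^\pm)=\pm E_1^\pm$, yields $\phi(U_1^\pm)\subseteq U_1^\pm$ when $\mu_1>0$ and $\phi(U_1^\pm)\subseteq U_1^\mp$ when $\mu_1<0$. I will build regular two-dimensional cones $\sigma_1^\pm$ containing $E_1^\pm$ in their closures and satisfying $\phi(\sigma_1^\pm)\subseteq\sigma_1^\pm$ (resp.\ $\subseteq\sigma_1^\mp$) when $\mu_1>0$ (resp.\ $\mu_1<0$). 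If $E_1^\pm$ lies in the interior of a cone of $\Delta$, that cone serves as $\sigma_1^\pm$, possibly after further subdivision via Lemma~\ref{lma:subdivide1} to ensure it lies in the $E_1$-attracting basin $N_\R\setminus E_2$. If instead $E_1^\pm$ is a rational ray not yet in $\Delta(1)$, I adjoin it and re-regularize via Lemma~\ref{lma:minrefine}, taking $\sigma_1^\pm$ to be the adjacent cone on the side toward which $\phi$ contracts. Analogous constructions at $E_2^\pm$, using $\phi^{-1}(U_2)\subseteq U_2$, yield cones $\sigma_2^\pm$ with $\phi^{-1}(\sigma_2^\pm)\subseteq\sigma_2^\pm$ or the corresponding swap. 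Incorporating $\sigma_1^\pm,\sigma_2^\pm$ into a regular intermediate refinement $\Delta_1$, I next process transient orbits: for each ray $\rho\in\Delta_1(1)$ whose forward $\phi$-orbit does not already lie in $\sigma_1^+\cup\sigma_1^-$, I adjoin $\phi(\rho),\phi^2(\rho),\ldots$ as rays and re-regularize after each via Lemma~\ref{lma:minrefine}. Because $|\mu_1|>|\mu_2|$, every ray off $E_2$ enters $\sigma_1^+\cup\sigma_1^-$ in finitely many steps, so this terminates in a regular refinement $\Delta'$ of $\Delta$. Toric stability of $\phi$ on $\Delta'$ then follows from Corollary~\ref{invkoner} with $\cS=\{\sigma_1^+,\sigma_1^-\}$, combined with Remark~\ref{teckenrmk} when $\mu_1<0$. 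If $\Delta$ is symmetric, executing each refinement symmetrically, as permitted by Lemma~\ref{lma:minrefine}(ii), yields $\Delta'$ symmetric.

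\emph{Converse direction.} Suppose $\phi$ is torically stable on the regular fan $\Delta$, and let $\sigma\in\Delta$ meet $E_1\setminus\{0\}$. When $\dim\sigma\leq 1$ the statement is immediate, so assume $\sigma$ is two-dimensional with boundary rays $u_1,u_2$ and $E_1^\epsilon\in\sigma$. At least one $u_i$ lies off $E_2$, and by the angular contraction of $\phi$ toward $E_1$, the iterate $\phi^n(u_i)$ lies in $\interior\sigma$ for some $n$. Corollary~\ref{stablma} then forces $\phi(\sigma)$ into a single cone $\tau\in\Delta$; since $\phi(\sigma)$ contains $\phi(E_1^\epsilon)\in E_1\setminus\{0\}$, $\tau$ meets $E_1\setminus\{0\}$, giving $\phi(\sigma)\subseteq\tau\subseteq U_1$. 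The argument that $\phi^{-1}(U_2)\subseteq U_2$ is entirely analogous, with $\phi^{-1}$ and $E_2$ replacing $\phi$ and $E_1$.

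The principal obstacle lies in the forward direction: the iterative refinement must preserve the invariance of $\sigma_1^\pm,\sigma_2^\pm$. This is ensured because each newly adjoined ray lies outside these invariant cones at the step it is added (its orbit is still transient by hypothesis), and Lemma~\ref{lma:minrefine} subdivides only the cone containing the new ray, leaving $\sigma_1^\pm$ and $\sigma_2^\pm$ intact throughout.
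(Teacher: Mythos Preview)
Your converse direction matches the paper's. The forward direction, however, has a genuine gap. When you adjoin an orbit ray $\phi^k(\rho)$ and re-regularize via Lemma~\ref{lma:minrefine}, the regularization produces \emph{new} rays inside the subdivided cone. These rays are not in $\Delta_1(1)$, so your loop never tracks their forward orbits; hence there is no guarantee that every ray of the final $\Delta'$ maps to a ray of $\Delta'$ or into $\sigma_1^\pm$, and Corollary~\ref{invkoner} does not apply. Iterating the whole procedure would require a termination argument you do not supply. Your construction of $\sigma_1^\pm$ also has smaller problems: the case $E_1^\pm\in\Delta(1)$ is not covered; when $\mu_2<0$ there is no single ``side toward which $\phi$ contracts'' across $E_1$; and subdividing the cone containing $E_1^\pm$ can destroy the invariance that the hypothesis $\phi(U_1)\subseteq U_1$ guaranteed only for the original cone. (The backward-invariant cones $\sigma_2^\pm$ play no role in establishing forward stability and are unnecessary.)

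The paper's argument is simpler and handles termination cleanly. It never builds auxiliary cones $\sigma_i^\pm$: the sets $U_1,U_2$ are already unions of cones of $\Delta$ and serve directly as the absorbing and repelling regions, so one refines only the complement. The device making this work is a finite partition $\Omega_0=U_2,\Omega_1,\dots,\Omega_J,\Omega_{J+1}=U_1$ of $N_\R\setminus\{0\}$ satisfying $\phi(\Omega_j)\subseteq\Omega_{j+1}\cup U_1$ for $1\le j\le J$. At stage $j$ one adjoins all rays $\phi(\sigma)\in\Omega_j$ for $\sigma$ ranging over the rays of the \emph{current} fan lying in $\Omega_{j-1}$ (thus including those created by the previous regularization), and then regularizes. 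Since regularization at stage $j$ only adds rays inside $\Omega_j$, these are automatically picked up at stage $j+1$; the process terminates after $J$ stages and never touches $U_1\cup U_2$.
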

\begin{proof}
  We may assume $U_1\cup U_2\ne N_\R$, or else $f:X(\Delta)\dashrightarrow X(\Delta)$ is $1$-stable by Corollary~\ref{invkoner}. 
  We define an integer $J\ge1$ and a sequence of 
  (not necessarily convex) cones
  \begin{equation*}
    \Omega_0=U_2, \Omega_1,\dots,\Omega_J,\Omega_{J+1}=U_1
  \end{equation*}
  as follows. 
  The set $\Omega_1:=\phi(\Omega_0)\setminus(U_1\cup U_2)$ 
  is nonempty and there exists $J\ge1$ minimal such that 
  $\phi^J(\Omega_1)\subseteq U_1$.
  Set $\Omega_j=\phi^{j-1}(\Omega_1)\setminus U_1$ for $1<j\le J$.
  Then $\{\Omega_j\setminus\{0\}\}_{j=0}^{J+1}$ defines a partition of 
  $N_\R\setminus\{0\}$.
  Note that $\phi(\Omega_j)\subseteq\Omega_{j+1}\cup U_1$ 
  for $1\le j\le J$.

  Let $\Delta_1$ be the fan obtained from $\Delta$ by 
  adding all rays of the form $\phi(\sigma)\in\Omega_1$, 
  where $\sigma$ is a ray in $\Delta$ contained in $U_2$.
  Let $\Delta'_1$ be a regular refinement of $\Delta_1$, in which the regular cones of $\Delta_1$ are kept, as described in Lemma~\ref{lma:minrefine}. 
  Note that this refinement procedure does not subdivide 
  any cone contained in $U_1\cup U_2$.

  Inductively, for $1<j\le J$,
  let $\Delta_j$ be the fan obtained from $\Delta'_{j-1}$ by
  adding all rays of the form $\phi(\sigma)\in\Omega_j$,
  where $\sigma$ is a ray in $\Delta'_{j-1}$ contained in 
  $\Omega_{j-1}$, and let $\Delta'_j$ be the 
  regular refinement of $\Delta_j$ given by Lemma~\ref{lma:minrefine}.
  This refinement procedure does not modify any cone contained in 
  $U_1\cup U_2\cup\Omega_1\cup\dots\cup\Omega_{j-1}$.
  Then we can use $\Delta'=\Delta'_J$.
  If $\Delta$ is symmetric, then so is $\Delta'$. 

  For the second part of the lemma, 
  note that if $\sigma$ is a ray in $\Delta$ that is
  not contained in $E_1\cup E_2$, then 
  $\phi^n(\interior\sigma)\subseteq\interior U_1$ for $n\gg1$. 
  Thus, for $\phi$ to be torically stable on $\Delta$,
  $\phi$ must map any cone contained 
  in $U_1$ into another cone contained in $U_1$. This implies
  $\phi(U_1)\subseteq U_1$.
  A similar argument shows $\phi^{-1}(U_2)\subseteq U_2$.
\end{proof}
%
%
\subsubsection{Integer eigenvalues}\label{S101}
The first subcase is when $|\mu_1|>|\mu_2|$ and $\mu_1,\mu_2\in\Z$ and thus the corresponding eigenspaces $E_1, E_2$ are rational. 
We claim that $\phi$ can always be made torically stable on a regular fan in this case.
To see this, we only need to satisfy the hypotheses of 
Lemma~\ref{lma:criterion1}. After refining, we may assume that $\Delta$
is symmetric and regular and that the eigenspaces $E_i\subseteq N_\R$ 
are unions of cones in $\Delta$. 

For $i=1,2$, let $U_i$ be the union of 
all cones in $\Delta$ intersecting $E_i$. 
If $\mu_1,\mu_2>0$, then $\phi(U_1)\subseteq U_1$ and
and $\phi^{-1}(U_2)\subseteq U_2$.
Hence Lemma~\ref{lma:criterion1} applies.
The same is true also when $\mu_1,\mu_2<0$ since 
$\Delta$ is symmetric.

When $\mu_1$ and $\mu_2$ have opposite signs, we have to be more 
careful. For definiteness, let us assume $\mu_1>0>\mu_2$. 
(The case $\mu_1<0<\mu_2$ is handled the same way as long as
all fans we construct are symmetric.)
Let $\sigma_1$ and $\sigma_2$ be two-dimensional cones 
in $\Delta$ sharing a common face $\tau$ contained in $E_1$.
Provided $\Delta$ is symmetric, $\phi(U_1)\subseteq U_1$ is equivalent to 
$\phi(\sigma_1)\subseteq\sigma_2$ and
$\phi(\sigma_2)\subseteq\sigma_1$, which will only happen
if $\sigma_1$ and $\sigma_2$ have roughly the same size.
We claim this can be arranged by subdividing the cones $\sigma_i$.
Pick generators $v_1$, $v_2$ for $N$ such that 
$v_1\in\tau=\sigma_1\cap\sigma_2$.
Then $\phi$ is given by the matrix
$\bigl( \begin{smallmatrix} 
  a&b\\ 0&-d 
\end{smallmatrix} \bigr)$
where $a=\mu_1>0$ and $0<d=|\mu_2|<a$.
The one-dimensional faces
of $\sigma_1$ (resp.\ $\sigma_2$)
are $\tau$ and a ray whose primitive
vector is of the form $r_1v_1+v_2$ 
(resp. $r_2v_1-v_2$), where $r_1,r_2\in\Z$.
By making a barycentric subdivision of $\sigma_i$ 
and replacing $\sigma_i$ by the subcone containing 
$\tau$ we replace $r_i$ by $r_i+1$.
Repeating this procedure finitely many times, we can
achieve $r_1=r_2=r\gg0$.
Picking $r>|b|/(a-d)$ it is straightforward to verify that 
$\phi(\sigma_1)\subseteq\sigma_2$
and $\phi(\sigma_2)\subseteq\sigma_1$.
Making the construction symmetric, we obtain 
$\phi(U_1)\subseteq U_1$.
A similar construction gives 
$\phi^{-1}(U_2)\subseteq U_2$.
Thus Lemma~\ref{lma:criterion1} applies.
%
%
\subsubsection{Irrational eigenvalues}\label{sec:S301}
The second subcase is when $|\mu_1|>|\mu_2|$ and 
$\mu_1$ and $\mu_2$ are both real irrational.
Then the corresponding eigenspaces $E_i\subseteq N_\R$, $i=1,2$,
contain no nonzero lattice points.
\begin{prop}\label{prop:dim2stab1}
  If $\mu_1,\mu_2$ are of the same sign then any fan 
  $\Delta$ admits a regular refinement $\Delta'$ on which 
  $\phi$ is torically stable.
\end{prop}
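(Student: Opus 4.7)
The plan is to apply Lemma~\ref{lma:criterion1} after refining $\Delta$ to a complete, regular, symmetric fan. Since $\mu_1,\mu_2$ are irrational, the eigenspaces $E_1, E_2 \subseteq N_\R$ meet $N$ only at the origin, so neither is a ray of any fan we work with.

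The first task is to produce the desired refinement. I would complete $\Delta$ to a complete fan (in dimension two one inserts auxiliary rays and two-dimensional cones to cover $N_\R$), then symmetrize by taking the common refinement with $-\Delta := \{-\sigma : \sigma \in \Delta\}$; this gives a complete fan $\Delta_0$ with $-\Delta_0 = \Delta_0$. Lemma~\ref{lma:minrefine} then produces a regular refinement of $\Delta_0$ that is still symmetric, which I use in place of $\Delta$ from now on. Under these assumptions each component of $E_i \setminus \{0\}$ passes through the relative interior of exactly one two-dimensional cone of $\Delta$, so that $U_i = \sigma_i^+ \cup \sigma_i^-$ with $\sigma_i^- = -\sigma_i^+$, in the notation of Lemma~\ref{lma:criterion1}.

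The key step is verifying $\phi(U_1) \subseteq U_1$ via a projective contraction. Pick a basis $(f_1, f_2)$ of $N_\R$ along the eigenvectors of $\phi$, so $\phi(v_1 f_1 + v_2 f_2) = \mu_1 v_1 f_1 + \mu_2 v_2 f_2$, and label so that $\sigma_1^+$ contains $\R_+ f_1$; then $\sigma_1^+ = \{v_1 f_1 + v_2 f_2 : v_1 \geq 0,\ s_- v_1 \leq v_2 \leq s_+ v_1\}$ for some $s_- < 0 < s_+$. For $v_1 > 0$ the slope $v_2/v_1$ is sent to $(\mu_2/\mu_1)(v_2/v_1) \in [s_-, s_+]$ since $\mu_2/\mu_1 \in (0, 1)$ (here we use $\mu_1 \mu_2 > 0$ together with $|\mu_1|>|\mu_2|$). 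When $\mu_1 > 0$ the first coordinate remains nonnegative and $\phi(\sigma_1^+) \subseteq \sigma_1^+$; when $\mu_1 < 0$ (so also $\mu_2<0$) the first coordinate changes sign and $\phi(\sigma_1^+) \subseteq -\sigma_1^+ = \sigma_1^-$. Either way $\phi(U_1) \subseteq U_1$. The analogous argument for $\phi^{-1}$---whose eigenvalues $\mu_1^{-1}, \mu_2^{-1}$ have the same sign with $|\mu_2^{-1}| > |\mu_1^{-1}|$ and attracting eigenspace $E_2$---yields $\phi^{-1}(U_2) \subseteq U_2$. Lemma~\ref{lma:criterion1} then delivers the desired regular refinement $\Delta'$ of $\Delta$ on which $\phi$ is torically stable.

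The main obstacle is the bookkeeping needed in the preparation step---ensuring the working fan is simultaneously complete, regular, and symmetric, the last property being essential so that $-\sigma_1^+ = \sigma_1^-$ is actually a cone of the fan in the case of negative eigenvalues. Once that preparation is done, the dynamical content reduces transparently to the scalar contraction $s \mapsto (\mu_2/\mu_1) s$ of slopes near the attracting fixed point $[E_1]$ in $\P(N_\R)$.
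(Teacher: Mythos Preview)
Your proof is correct and follows the same approach as the paper: reduce to a symmetric (regular) fan and verify the hypotheses of Lemma~\ref{lma:criterion1} via the projective contraction $s\mapsto(\mu_2/\mu_1)s$ toward $[E_1]$. The paper merely asserts that any symmetric cone $U_i$ with $E_i\setminus\{0\}\subseteq\interior U_i$ satisfies the required invariance, whereas you spell out the slope computation explicitly---but the argument is the same.
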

\begin{proof}
  We may assume $\Delta$ is symmetric.
  The assumption that $\mu_1$ and $\mu_2$ have  the same
  sign implies that \emph{any} symmetric cones $U_i$, $i=1,2$
  for which $E_i\setminus\{0\}\subseteq\interior U_i$ must satisfy
  $\phi(U_1)\subseteq U_1$ and $\phi^{-1}(U_2)\subseteq U_2$.
  Thus the proposition follows from Lemma~\ref{lma:criterion1}.
\end{proof}

Now assume $\mu_1$ and $\mu_2$ have different signs.
This case is quite delicate.
Let us assume for now that $\mu_1>0>\mu_2$.

Our starting point is a regular two-dimensional cone $\sigma_1$
containing an eigenvector associated to $\mu_1$
but not containing any eigenvector associated
to $\mu_2$. 
Such a cone exists and can be constructed
using repeated barycentric subdivisions and 
invoking Lemma~\ref{lma:subdivide2}.
Write $\sigma_1=\R_+v_1+\R_+v_2$, where $v_1,v_2$
are generators for $N$. 
Then $\phi$ admits eigenvectors of the form
$v_1+z_iv_2$ associated to $\mu_i$, $i=1,2$,
where $z_2<0<z_1$.
After exchanging $v_1$ and $v_2$ if necessary, 
we may and will assume that $\max\{|z_1|,|z_2|\}>1$.

We now inductively 
define a sequence $(v_{1,n},v_{2,n})_{n\in\Z}$ 
of generators for $N$. They will have the property that 
$\phi$ admits an eigenvector of the form
$v_{1,n}+z_{i,n}v_{2,n}$ associated to $\mu_i$, $i=1,2$,
where $z_{2,n}<0<z_{1,n}$ and $\max\{|z_{1,n}|,|z_{2,n}|\}>1$.
Set $v_{i,0}:=v_i$ and $z_{i,0}=z_i$, $i=1,2$.

First suppose $n>0$. 
If $z_{1,n-1}>1$, set
$(v_{1,n},v_{2,n}):=(v_{1,n-1}+v_{2,n-1},v_{2,n-1})$
and if $0<z_{1,n-1}<1$, set $(v_{1,n},v_{2,n}):=(v_{2,n-1},v_{1,n-1})$.
This leads to
\begin{equation}\label{e201}
  (z_{1,n},z_{2,n})
  =\begin{cases}
  (z_{1,n-1}-1,z_{2,n-1}-1)  &\text{if $z_{1,n-1}>1$}\\
  (z_{1,n-1}^{-1},z_{2,n-1}^{-1}) &\text{if $0<z_{1,n-1}<1$}
  \end{cases}.
\end{equation}
Now suppose $n<0$.
If $z_{2,n+1}<-1$, set
$(v_{1,n},v_{2,n}):=(v_{1,n+1}-v_{2,n+1},v_{2,n+1})$
and if $-1<z_{2,n+1}<0$, set $(v_{1,n},v_{2,n}):=(-v_{2,n+1},-v_{1,n+1})$.
We obtain
\begin{equation}\label{e202}
  (z_{1,n},z_{2,n})
  =\begin{cases}
  (z_{1,n+1}+1,z_{2,n+1}+1)  &\text{if $z_{2,n+1}<-1$}\\
  (z_{1,n+1}^{-1},z_{2,n+1}^{-1}) &\text{if $-1<z_{2,n+1}<0$}.
  \end{cases}
\end{equation}

Notice that~\eqref{e201} and~\eqref{e202} in fact hold
for \emph{all} $n\in\Z$. This follows from the fact that
$\max\{|z_{1,n}|,|z_{2,n}|\}>1$.
For example, suppose $n\le0$ and that $z_{1,n-1}>1$.
To verify~\eqref{e201} we must show that
$(z_{1,n},z_{2,n})=(z_{1,n-1}-1,z_{2,n-1}-1)$.
This follows from~\eqref{e202} applied to $n-1$
if we know that $z_{2,n}<-1$.
But if $-1<z_{2,n}<0$, then $z_{1,n}>1$ and
so~\eqref{e202} would give 
$z_{1,n-1}=z_{1,n}^{-1}<1$, a contradiction. 

For any $n\in\Z$,
$\sigma_{1,n}:=\R_+v_{1,n}+\R_+v_{2,n}$ 
and $\sigma_{2,n}:=\R_+v_{1,n}+\R_+(-v_{2,n})$ 
are regular cones containing eigenvectors 
associated to $\mu_1$ and $\mu_2$, respectively, in their interiors.
For $n>0$, $\sigma_{1,n}$ is obtained by barycentric subdivision 
of $\sigma_{1,n-1}$.
For $n<0$, $\sigma_{2,n}$ is obtained by barycentric subdivision 
of $\sigma_{2,n+1}$.
This implies that the sequences $(\sigma_{i,n})_{n\in\Z}$, 
$i=1,2$ are largely independent of the initial choice of 
cone $\sigma_1$.
Indeed, suppose we start with another cone $\sigma'_1$,
obtaining corresponding sequences $(\sigma'_{i,n})_{n\in\Z}$.
By Lemma~\ref{lma:subdivide1} there exist
$l_i\in\Z$, $i=1,2$, such that $\sigma'_{1,n}=\sigma_{1,n+l_1}$ 
and $\sigma'_{2,n}=\sigma_{2,n+l_2}$ for
$n\gg0$ and $n\ll0$, respectively.

Let 
$A_n=\bigl( \begin{smallmatrix} 
  a_n&b_n\\ c_n&d_n 
\end{smallmatrix} \bigr)$
be the matrix of $\phi$ in the basis $(v_{1,n},v_{2,n})$.
We are interested in whether $A_n$ has nonnegative entries.
A direct computation shows 
$b_n=(\mu_1-\mu_2)/(z_{1,n}-z_{2,n})>0$ and
$c_n=(\mu_1-\mu_2)/(z_{1,n}^{-1}-z_{2,n}^{-1})>0$
for all $n$.
As for the diagonal entries, 
note that $a_n+d_n=\mu_1+\mu_2=:\gamma>0$ is
independent of $n$.
Set $\delta_n=a_n-d_n$.
We see that $A_n$ has nonnegative entries 
if and only if $|\delta_n|\le\gamma$.
\begin{lma}\label{lma:criterion2}
  The sequence $(A_n)_{n\in\Z}$ is periodic.
  Further, the following conditions are equivalent:
  \begin{itemize}
  \item[(i)]
    there exists $n$ such that $a_n,b_n,c_n,d_n\ge0$;
  \item[(ii)]
    there exist infinitely many $n\ge0$ such that $a_n,b_n,c_n,d_n\ge0$;
  \item[(iii)]
    any fan $\Delta$ admits a regular refinement $\Delta'$ on which 
    $\phi$ is torically stable.
  \end{itemize}
\end{lma}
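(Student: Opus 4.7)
My plan is to first establish the periodicity of $(A_n)_{n\in\Z}$ and then derive the three equivalences from Lemma~\ref{lma:criterion1}; the periodicity is the main obstacle and amounts, essentially, to Lagrange's theorem on continued fractions of quadratic irrationals. Since $A_n\in\mathrm{GL}_2(\Z)$ has fixed trace $\gamma=\mu_1+\mu_2\in\Z$ and fixed determinant $\mu_1\mu_2\in\Z$, and is completely determined by the pair $(z_{1,n},z_{2,n})$ together with these invariants, it suffices to prove periodicity of $(z_{1,n},z_{2,n})$. Reading off the eigenvector equations shows that $z_{1,n}$ and $z_{2,n}$ are Galois conjugate roots of a quadratic polynomial over $\Q$ — roots of a quadratic irrational sitting in the fixed real quadratic field $\Q(\sqrt{\gamma^2-4\mu_1\mu_2})$. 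The recursions~\eqref{e201}--\eqref{e202} describe a piecewise $\mathrm{PSL}_2(\Z)$-action — the slow Euclidean/Gauss algorithm applied simultaneously to both coordinates — and the backward step~\eqref{e202} is literally the inverse of the forward step~\eqref{e201}. So it is enough to show eventual forward periodicity, which is exactly the classical fact that the continued fraction expansion of a quadratic irrational is eventually periodic (equivalently, the $\mathrm{SL}_2(\Z)$-orbit of an indefinite integral binary quadratic form contains only finitely many reduced representatives). Invertibility then promotes eventual periodicity to full bilateral periodicity of $(A_n)$, and (i)$\Leftrightarrow$(ii) is immediate.

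For (i)$\Rightarrow$(iii), pick $n_+\gg 0$ and $n_-\ll 0$ with $A_{n_\pm}\ge 0$ (these exist by bilateral periodicity). A direct check in the basis $(v_{1,n},v_{2,n})$ gives $\phi(\sigma_{1,n})\subseteq \sigma_{1,n}$ whenever $A_n\ge 0$, and the parallel computation of $A_n^{-1}$, using $\det A_n=\mu_1\mu_2<0$, yields $\phi^{-1}(\sigma_{2,n})\subseteq -\sigma_{2,n}$. Given an arbitrary fan $\Delta$, I would first regularize and symmetrize it via Lemma~\ref{lma:minrefine}, and then perform finitely many preliminary barycentric subdivisions to narrow the $2$-cone containing the $\mu_1$-eigenray until it no longer contains the $\mu_2$-eigenray. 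Using this cone as the initial cone $\sigma_{1,0}$ in the construction, Lemma~\ref{lma:subdivide1} identifies its further barycentric subdivisions containing the $\mu_1$-eigenray with the sequence $\sigma_{1,n}$, $n\ge 0$; in particular $\pm\sigma_{1,n_+}$ can be realized as cones of a further refinement. A symmetric argument on the $E_2$ side realizes $\pm\sigma_{2,n_-}$. Setting $U_1=\sigma_{1,n_+}\cup(-\sigma_{1,n_+})$ and $U_2=\sigma_{2,n_-}\cup(-\sigma_{2,n_-})$, the invariance hypotheses of Lemma~\ref{lma:criterion1} are satisfied, and that lemma produces the required regular refinement on which $\phi$ is torically stable.

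For (iii)$\Rightarrow$(i), I apply (iii) to any complete fan to obtain a regular refinement $\Delta'$ on which $\phi$ is torically stable, and invoke the converse half of Lemma~\ref{lma:criterion1} to get $\phi(U_1)\subseteq U_1$. The unique $2$-cone $\tau\in\Delta'$ whose interior contains the $\mu_1$-eigenvector $e_1$ must then satisfy $\phi(\tau)\subseteq\tau$: torical stability forces $\phi(\tau)$ to lie in a single cone of $\Delta'$, and $\phi(e_1)=\mu_1 e_1\in\interior\tau$ pins that cone down to $\tau$. Moreover $\tau$ cannot contain the $\mu_2$-eigenvector $e_2$, for then $\phi(e_2)=\mu_2 e_2\in -\tau$ would force $\phi(\tau)\not\subseteq\tau$. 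Hence $\tau$ is a legitimate initial cone of the construction (after, if necessary, swapping its generators to satisfy the normalization $\max(|z_{1,0}|,|z_{2,0}|)>1$), and the resulting $A_0$, being the matrix of $\phi|_\tau$ in a basis of $\tau$ mapping $\tau$ into itself, has nonnegative entries; this is precisely (i).
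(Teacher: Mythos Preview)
Your proof is correct and follows essentially the same route as the paper's: periodicity via the bounded integer data attached to the quadratic irrational $z_{1,n}$ (the paper makes this self-contained by tracking the triple $(b_n,\delta_n,c_n)$ with fixed discriminant $D=\delta_n^2+4b_nc_n$, which is precisely the Lagrange argument you invoke), and then Lemma~\ref{lma:criterion1} in both directions for the equivalences. Your computation $\phi^{-1}(\sigma_{2,n})\subseteq -\sigma_{2,n}$ is in fact the accurate statement (the paper writes $\subseteq\sigma_{2,n}$, but since $U_2=\sigma_{2,n_2}\cup(-\sigma_{2,n_2})$ is symmetric this is immaterial), and your use of Corollary~\ref{stablma} to pin down $\phi(\tau)\subseteq\tau$ in the (iii)$\Rightarrow$(i) step is a clean way to justify what the paper leaves implicit.
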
    
\begin{proof}
  Note that $z_{i,n}$, $i=1,2$ are the roots of
  $b_nz^2+\delta_nz-c_n=0$. 
  It follows from~\eqref{e201} that 
  \begin{equation}
    (b_{n+1},\delta_{n+1},c_{n+1})
    =\begin{cases}
      (b_n,\delta_n+2b_n,c_n-b_n-\delta_n)  &\text{if $c_n>b_n+\delta_n$}\\
      (c_n,-\delta_n,b_n) &\text{if $c_n<b_n+\delta_n$}
    \end{cases}\label{e203}
  \end{equation}
  We see that  the quantity $D\=D_n=\delta_n^2+4b_nc_n$
  is independent of $n$; in fact, $D$ is the discriminant of $b_nz^2+\delta_nz-b_n$. As $b_n,c_n,\delta_n$ are integers and 
  $b_n,c_n>0$, it follows that the sequence 
  $(b_n,\delta_n,c_n)_{n\in\Z}$,
  and hence also the sequence $(A_n)_{n\in\Z}$,
  must be periodic. 
  This immediately shows that~(i) and~(ii) are equivalent.
  
  Before showing that~(i) and~(ii) are equivalent to~(iii), 
  recall that the data constructed so far is essentially 
  independent of the initial choice of regular cone $\sigma_1$.
  In particular, the sequence $(A_n)_{n\in\Z}$ is 
  independent of this choice, up to an index shift, and
  so the validity of~(ii) is independent of $\sigma_1$.

  To show that~(ii) implies~(iii), suppose $A_n$ has 
  nonnegative entries.
  Then the regular cone $\sigma_{1,n}:=\R_+v_{1,n}+\R_+v_{2,n}$ 
  is invariant: $\phi(\sigma_{1,n})\subseteq\sigma_{1,n}$.
  Similarly, the regular cone 
  $\sigma_{2,n}:=\R_+v_{1,n}+\R_+(-v_{2,n})$ 
  satisfies $\phi^{-1}(\sigma_{2,n})\subseteq\sigma_{2,n}$.

  Pick $n_1\gg0$ and $n_2\ll0$ such that 
  $A_{n_i}$ has nonnegative entries for $i=1,2$.
  We may assume $\sigma_{1,n_1}$ 
  and $\sigma_{2,n_2}$ are arbitrarily small
  regular cones containing eigenvectors associated to 
  $\mu_1$ and $\mu_2$, respectively.
  By Lemma~\ref{lma:subdivide1}, we may, after 
  replacing $\Delta$ by a suitable 
  symmetric regular refinement,
  assume that $\pm\sigma_{1,n_1}$ and $\pm\sigma_{2,n_2}$ 
  are cones in $\Delta$. 
  We may then apply Lemma~\ref{lma:criterion1} to
  $U_i=\sigma_{i,n_i}\cup(-\sigma_{i,n_i})$ 
  and conclude that~(iii) holds.

  Finally, to show that~(iii) implies~(i), assume that 
  $\phi$ is torically stable on a regular refinement 
  $\Delta'$ of $\Delta$. 
  We can then use as our initial cone $\sigma_1$
  a cone in $\Delta'$ containing an eigenvector 
  associated to $\mu_1$.
  Indeed, it follows from Lemma~\ref{lma:criterion1}
  that $\phi(\sigma_1)\subseteq\sigma_1$ and that
  $\sigma_1$ cannot contain any eigenvector associated to
  $\mu_2$. The fact that $\phi(\sigma_1)\subseteq\sigma_1$
  implies that $A_0$ has nonnegative entries.
\end{proof}
\begin{remark}
  The sequence $(z_{1,n})_{n\ge0}$ encodes the 
  continued fractions expansion of $z_1$, and the proof that
  $(A_n)_{n\ge0}$ is periodic corresponds to  the classical 
  proof of the (pre)periodicity of the continued 
  fractions expansion of a quadratic surd
  (a result due to Lagrange, see~\cite[Theorem 177, p.185]{HardyWright}).
\end{remark}
\begin{prop}\label{P101}
  When $|\mu_2|<1$,
  any fan $\Delta$ admits a regular refinement $\Delta'$ 
  on which $\phi$ is torically stable.
\end{prop}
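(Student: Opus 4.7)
The plan is to apply the criterion in Lemma~\ref{lma:criterion2} and verify its condition~(i): that $A_n$ has nonnegative entries for some $n$. The same-sign case $\mu_1\mu_2>0$ is already handled by Proposition~\ref{prop:dim2stab1}, so I would assume $\mu_1$ and $\mu_2$ have opposite signs, and by the symmetry noted in the construction it suffices to treat $\mu_1>0>\mu_2$, placing us squarely in the setup leading up to Lemma~\ref{lma:criterion2}.

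Next I would record two integrality facts. Since $\phi$ has an integer matrix, $\gamma=\mu_1+\mu_2=\tr\phi$ and $k:=-\mu_1\mu_2=-\det\phi=\mu_1|\mu_2|$ are positive integers. The hypothesis $|\mu_2|<1$ translates into $k\le\gamma$: writing $\mu_1=\gamma+|\mu_2|$, one computes
\[
k=(\gamma+|\mu_2|)|\mu_2|=\gamma|\mu_2|+|\mu_2|^2<\gamma+1,
\]
so $k\le\gamma$. Moreover, $\delta_n+\gamma=2a_n$ is even, so $\delta_n\equiv\gamma\pmod 2$.

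The core of the proof is then the following observation: $|\delta_n|\le\gamma$ for every $n$. Recall that $D:=\delta_n^2+4b_nc_n$ is an $n$-independent discriminant, equal to $(\mu_1-\mu_2)^2=\gamma^2+4k$. If $|\delta_n|>\gamma$ for some $n$, the parity congruence forces $|\delta_n|\ge\gamma+2$, whence $\delta_n^2\ge\gamma^2+4\gamma+4$ and
\[
4b_nc_n=D-\delta_n^2\le 4k-4\gamma-4\le -4,
\]
contradicting the fact (established in the paper) that $b_n$ and $c_n$ are positive integers. Consequently $a_n=(\gamma+\delta_n)/2$ and $d_n=(\gamma-\delta_n)/2$ are both $\ge 0$, and together with $b_n,c_n>0$ this shows that every $A_n$ is entrywise nonnegative.

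This verifies condition~(i) of Lemma~\ref{lma:criterion2}, so that lemma immediately delivers condition~(iii), which is precisely the statement of Proposition~\ref{P101}. The only substantive step is the elementary parity-plus-discriminant bound above; there is no real obstacle, since all the geometric machinery---the construction of the sequences $(v_{i,n})$ and $(A_n)$, the invariance of $D$, and the equivalence of the three conditions---is already packaged into Lemma~\ref{lma:criterion2}.
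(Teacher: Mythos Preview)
Your proof is correct and in fact cleaner than the paper's. Both arguments make the same reduction (via Proposition~\ref{prop:dim2stab1} for the same-sign case, and symmetry for $\mu_1<0<\mu_2$) to the setup $\mu_1>0>\mu_2$ of Section~\ref{sec:S301}, and both aim to verify condition~(i) of Lemma~\ref{lma:criterion2}. The paper then works with the recursion~\eqref{e203}: it isolates a special case ($c_n=1$, $|\delta_n|=b_n$) and otherwise follows the dynamics of $(b_n,\delta_n,c_n)$ to locate a single index $n$ with $-b_n\le\delta_n<b_n$, hence $|\delta_n|<b_nc_n$, hence $|\delta_n|\le\sqrt{D}-2<\gamma$. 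Your approach avoids the recursion altogether: the parity observation $\delta_n\equiv\gamma\pmod2$ combined with the discriminant identity $D=\gamma^2+4k$ and the arithmetic bound $k\le\gamma$ (coming directly from $|\mu_2|<1$) forces $|\delta_n|\le\gamma$ for \emph{every} $n$. So you actually prove the stronger statement that all $A_n$ are entrywise nonnegative, with less work.

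Two small points worth tightening. First, you should remark that $|\mu_2|<1$ together with $\det\phi\ne0$ forces the eigenvalues to be irrational (an integer $\mu_2$ with $|\mu_2|<1$ would vanish); this is what places you in Section~\ref{sec:S301} rather than Section~\ref{S101}. Second, the reduction from $\mu_1<0<\mu_2$ to $\mu_1>0>\mu_2$ deserves one explicit sentence: apply the argument to $-\phi$, producing a \emph{symmetric} regular refinement on which $-\phi$ is torically stable, and observe that on a symmetric fan $\phi$ and $-\phi$ are simultaneously torically stable. The paper does exactly this at the end of its proof.
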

\begin{proof}
  Note that $\mu_1$ and $\mu_2$ must be real irrational
  and that $|\mu_1|>1>|\mu_2|$.
  By Proposition~\ref{prop:dim2stab1} we 
  may assume they have different signs.
  First suppose $\mu_1>0>\mu_2$. 
  The condition $-1<\mu_2<0$ easily translates into 
  $\sqrt{D}-2<\gamma<\sqrt{D}$, 
  where $\gamma=\mu_1+\mu_2$ and $D$ is as in the proof of 
  Lemma~\ref{lma:criterion2}; 
  indeed $\mu_j=(\gamma\pm\sqrt D)/2$.
  As noted above, $b_n,c_n>0$ for all $n$
  and by Lemma~\ref{lma:criterion2} we only need to find 
  $n\in\N$ such that $|\delta_n|\le\gamma$,
  where $\delta_n=a_n-d_n$.

  First suppose there exists $n$ 
  such that $c_n=1$ and $|\delta_n|=b_n$. 
  Then $D=\delta_n^2+4b_nc_n=(b_n+2)^2-4$, so
  $\Z\ni\gamma>\sqrt{D}-2$ implies
  $\gamma>b_n+2-2=b_n=|\delta_n|$ and we are done.
  
  In general, it suffices to find $n$
  with $|\delta_n|\le\sqrt{D}-2$, a condition
  equivalent to $|\delta_n|<b_nc_n$.
  There exists $n_0\ge0$ such that $\delta_{n_0}<0$,
  or else we would be able to apply the first
  transformation in~\eqref{e203} infinitely many
  times in a row, which is clearly not possible. Indeed, the second transformation changes the sign of $\delta_n$. 
  Successively applying~\eqref{e203}
  we find $n\ge n_0$ with $-b_n\le\delta_n<b_n$.
  Then $|\delta_n|<b_nc_n$, unless $\delta_n=-b_n$ 
  and $c_n=1$, a case we have already taken care of.

  Finally, consider the case $\mu_1<0<\mu_2$. By what precedes, we
  can find a \emph{symmetric} regular refinement $\Delta'$ 
  of $\Delta$ on which the map $-\phi:N\to N$ is torically stable.
  Then $\phi$ is also torically stable on $\Delta'$.
\end{proof}
The following example shows that Theorem~C fails in general when $|\mu_1|>|\mu_2|>1$. 
\begin{ex}\label{motex1}
  It follows from Lemma~\ref{lma:criterion2} that
  the linear map $\phi:N\to N$ given by the matrix 
  $A=A_\phi=\bigl( \begin{smallmatrix} 
    -1&3\\ 3&2 
  \end{smallmatrix} \bigr)$
  cannot be made torically stable for any complete 
  regular fan.
  Indeed, $A_0=A$,
  $A_1=\bigl( \begin{smallmatrix} 
    2&3\\ 3&-1 
  \end{smallmatrix} \bigr)$
  and $A_n=A_{n-2}$. 
  Here $\mu_j=(1\pm3\sqrt 5)/2$. 
\end{ex}
We record the following consequence of our analysis in Section~\ref{sec:S301}.
\begin{cor}
  Assume that the eigenvalues of $\phi:N\to N$ 
  satisfy $\mu_1>-\mu_2>0$ and $\mu_i\not\in\Z$ for $i=1,2$. 
  Moreover, assume $N$ has generators $v_1,v_2$ such that 
  $\phi$ is given by a matrix with nonnegative coefficients
  in the associated basis for $N_\R$. 
  Then $\phi$ admits an eigenvector $e_1=v_1+z_1v_2$
  in the first quadrant $\sigma_0=\R_+v_1+\R_+v_2$
  and there exists a sequence 
  $(\sigma_j)_{j\ge0}$ of regular cones such that 
  $\R_+e_1\subseteq\sigma_{j+1}\subseteq\sigma_j$,
  $\bigcap_{j=0}^\infty\sigma_j=\R_+e_1$ and $\phi(\sigma_j)\subseteq\sigma_j$
  for $j\ge0$.
  \end{cor}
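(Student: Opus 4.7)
The plan is to reuse the construction developed in Section~\ref{sec:S301} and then extract a suitable subsequence of the cones $\sigma_{1,n}$ built there. To set things up I first produce the eigenvector $e_1=v_1+z_1v_2$ with $z_1>0$: since $\mu_1\not\in\Z$ while the diagonal entries of the nonnegative matrix $A$ of $\phi$ in the basis $(v_1,v_2)$ are integers, neither $v_1$ nor $v_2$ can be an eigenvector, so the off-diagonal entries $b,c$ of $A$ are strictly positive; the usual formula for $\mu_1$ then gives $\mu_1>a$ and hence $z_1=(\mu_1-a)/b>0$, with $z_1$ automatically irrational.

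Next, after possibly swapping $v_1$ and $v_2$ (which leaves $\sigma_0$ unchanged), I may assume $\max\{|z_1|,|z_2|\}>1$ and run the recursion of Section~\ref{sec:S301} starting from $(v_{1,0},v_{2,0}):=(v_1,v_2)$. This yields a sequence of regular cones $\sigma_{1,n}=\R_+v_{1,n}+\R_+v_{2,n}$, each containing $e_1$, together with the matrices $A_n$ of $\phi$ in the successive bases. The key input is Lemma~\ref{lma:criterion2}: the sequence $(A_n)_{n\ge 0}$ is periodic with some period $p\ge 1$. Since $A_0=A$ has nonnegative entries by hypothesis, so does $A_{jp}$ for every $j\ge 0$. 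I then set $\sigma_j:=\sigma_{1,jp}$; each such $\sigma_j$ is a regular cone containing $e_1$, and the nonnegativity of $A_{jp}$ says exactly that $\phi$ maps the generators of $\sigma_j$ back into $\sigma_j$, so $\phi(\sigma_j)\subseteq\sigma_j$.

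What remains is to verify $\bigcap_{j\ge 0}\sigma_j=\R_+e_1$. Monotonicity $\sigma_{j+1}\subseteq\sigma_j$ is immediate from the recursion~\eqref{e201}, since each step is either a strict barycentric subdivision (when $z_{1,n-1}>1$) or a mere relabeling of the generators (when $0<z_{1,n-1}<1$). The main subtlety, which I expect to be the only real obstacle, is to ensure the \emph{strict} inclusion $\sigma_{j+1}\subsetneq\sigma_j$ that is needed to invoke Corollary~\ref{cor:regular}. For this I plan to exploit the fact that a relabeling step turns $z_{1,n}$ into $z_{1,n}^{-1}>1$, so it must be immediately followed by a genuine subdivision; consequently every block of $p$ consecutive indices contains at least one strict subdivision, forcing $\sigma_{j+1}\subsetneq\sigma_j$. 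Corollary~\ref{cor:regular} then provides $w\in N_\R$ with $\bigcap_j\sigma_j=\R_+w$, and since $e_1$ lies in every $\sigma_j$ this forces $w\in\R_+e_1$, completing the proof.
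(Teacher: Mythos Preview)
Your argument is correct and is essentially the intended one: the paper states the corollary as an immediate consequence of the analysis in Section~\ref{sec:S301} and gives no separate proof. Extracting the invariant subsequence $\sigma_j=\sigma_{1,jp}$ from the periodicity of $(A_n)$ in Lemma~\ref{lma:criterion2}, and then invoking Corollary~\ref{cor:regular} for the intersection, is exactly how one reads the result off from that analysis.
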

We thus obtain an independent proof of~\cite[Lemma~7]{eigenval};
see also~\cite{dyncomp}.
%
%
%
%
\subsection{The case $|\mu_1|=|\mu_2|$}
Write $\lambda=|\mu_1|=|\mu_2|$. 
There are two subcases.
%
%
\subsubsection{The diagonalizable case}
First consider the case when $\phi:N_\C\to N_\C$
is diagonalizable. 
When $\mu_1/\mu_2$ is not a root of unity, Favre~\cite{Favre} 
observed that $f$ cannot be made torically stable even on an 
irregular fan~\cite{Favre}. 
Indeed, the orbit $\bigcup_{n\ge0}\phi^n(\rho)$ of any
ray $\rho$ is dense in $N_\R$, so stability is impossible 
in view of Lemma~\ref{stablma}.

Now suppose $\mu_1/\mu_2$ is a root of unity.
Then $\phi^n=\lambda^n\id$ for some $n>0$,
where $\lambda=|\mu_1|=|\mu_2|$.
This implies that when $f$ is stable, $\phi$ must map any 
ray to another ray in the fan. 
We can only achieve this in special cases, such as when
$\mu_1=\mu_2$. Indeed, then 
$\phi=\pm\lambda\id$ and any symmetric fan is invariant. 

The following example illustrates the problems
that may appear when $\mu_1/\mu_2$ is a root of unity different from $1$. 
See also~\cite[Exemple~2]{Favre}.
\begin{ex}\label{motex2}
Let $\phi:N\to N$ be given by the matrix
  $A=A_\phi=\bigl( \begin{smallmatrix} 
    -1&-1\\ 3&-1 
  \end{smallmatrix} \bigr)$. Then $\mu_j=2e^{2\pi ij/3}$, $j=1,2$. In particular, $\phi^3=8\id$.  We claim that no complete regular fan $\Delta$ can be invariant by $\phi$. 
 To see this, consider any ray in $\Delta$ and
  let $v\in N$ be the corresponding primitive vector.
  Then $\phi(v)=lv'$ where $v'$ is another primitive vector and
  $l=l(v)\in\N$. If $v_1$ and $v_2$ are the primitive vectors of two adjacent rays in $\Delta$,
  then $l(v_1)l(v_2)=|\det\phi|=4$, since $\Delta$ is regular.
  Thus there are two cases: either $l(v)=2$ for all $v$,
  or $\{l(v_1),l(v_2)\}=\{1,4\}$ for any two adjacent 
  primitive vectors $v_1, v_2$. The first case is not possible
  as all entries in $\phi$ would have to be even. The second
  case cannot occur in view of $\phi^3=8\id$.
\end{ex}  
%
%
\subsubsection{The non-diagonalizable case}\label{S102}
Finally assume $\phi:N_\C\to N_\C$ is not diagonalizable.
Then $\mu_1=\mu_2=\pm\lambda$, where $\lambda\in\N$.
There exists a primitive lattice point 
$v\in N$ such that $\R v$ is 
the eigenspace for $\phi:N_\R\to N_\R$.
After subdividing, we may assume $\Delta$ is regular,
symmetric and that $\sigma\=\R_+v$ and $-\sigma$
are cones in $\Delta$. 
Pick $w\in N$ such that 
$(v,w)$ are generators for $N$ and 
such that $\R_+w$ and $-\R_+w$ are cones in $\Delta$.
The matrix of $\phi$ is given by
$A=A_\phi=\bigl( \begin{smallmatrix} 
  a&b\\ 0&a 
\end{smallmatrix} \bigr)$
where $a=\pm\lambda$ and $b\in\Z$, $b\ne0$.
Replacing $w$ by $-w$ if necessary, we have $b>0$.

First assume $a=\lambda$.
Let $\tau\in\Delta(2)$ be the unique
cone contained in $\R_+v+\R_+w$ and having
$\sigma$ as one of its faces. Then 
$\phi(\tau)\subsetneq\tau$ and 
$\phi(-\tau)\subsetneq-\tau$.
We can now proceed as in the proof of Lemma~\ref{lma:criterion1}
and refine $\Delta$ into a symmetric 
regular fan $\Delta'$ such that $\pm\tau\in\Delta'$
and for all rays $\sigma'\in\Delta(1)$
and all $n\ge0$ we have either $\phi^n(\sigma')\in \Delta(1)$
or $\phi^n(\sigma')\subseteq\pm\tau$. Then $\phi$ is torically stable
on $\Delta'$. The case when $a=-\lambda$ is handled in the same 
way, keeping all fans symmetric.
%
%
%
%
\subsection{Proof of Theorems~C and~C'}
We now have all ingredients necessary to complete the proof of Theorem~C.
In case~(a), that is, $|\mu_2|<1$, we are done by
Proposition~\ref{P101}.
In case~(b), that is, $|\mu_1|>|\mu_2|$ and $\mu_1,\mu_2\in\Z$,
the result follows as explained in Section~\ref{S101}.

Finally consider case~(c), that is, $\mu_1,\mu_2\in\R$ and $\mu_1\mu_2>0$.
If $\mu_1$ and $\mu_2$ are irrational, then we are done by 
Proposition~\ref{prop:dim2stab1}, so having treated cases~(a) and~(b), 
we may assume $\mu_1=\mu_2\in\Z$. Then either $\phi=\mu_1\id$,
with the theorem being trivial, or $\phi$ is not diagonalizable over $\C$,
in which case the theorem follows from the discussion in Section~\ref{S102}.

In fact, we have also proved Theorem~C', except for the 
case when $\mu_1$, $\mu_2$ are real, irrational, and of different sign.
We can then refine the original fan $\Delta$ so that it contains 
(possibly irregular) cones $\sigma_1$, $\sigma_2$ 
for which $\phi(\sigma_1)\subseteq\pm\sigma_1$ and 
$\phi^{-1}(\sigma_2)\subseteq\pm\sigma_2$. The proof of 
Lemma~\ref{lma:criterion1} now goes through and produces
a refinement $\Delta'$ of $\Delta$ on which $\phi$ is 
torically stable. In fact, the only irregular cones
in $\Delta'$ are $\pm\sigma_1$ and $\pm\sigma_2$.
%
%
%
%
%

\section{Stabilization - Proof of Theorems A and A'}\label{sec:proofA}
Throughout this section we assume that $\phi: N\to N$ has distinct 
and positive eigenvalues.
To prove Theorems A and A' we will use the criterion in 
Corollary~\ref{invkoner}.

The mapping $\phi:N_\R\to N_\R$ induces a mapping $\phi^*:M_\R\to M_\R$, defined by $\langle \phi^*\xi, v\rangle := \langle \xi, \phi (v)\rangle$ and with the same eigenvalues as $\phi$. Given a one-dimensional eigenspace $E\subseteq N_\R$ of $\phi$, let $\widetilde E\subseteq M_\R$ denote the corresponding eigenspace of $\phi^*$, and let $E^\perp:=\{v\in N_\R \mid \langle \xi, v\rangle =0~~\forall \xi\in \widetilde E\}\subseteq N_\R$. 
Note that, since the eigenvalues of $\phi$ are distinct,
$E^\perp$ is spanned by the eigenvectors that are not in $E$. 

\subsection{Real dynamics}\label{step1}
We say that a set $Z\subseteq N_\R$ is 
\emph{invariant} (under $\phi$) if $\phi (Z)\subseteq Z$. 

The following result is well-known, see for example \cite[Exercise~13, p.552]{Lang}.

\begin{lma}\label{tuesday}
  Any invariant subspace $V\subseteq N_\R$
  is spanned by eigenvectors of $\phi$. 
\end{lma}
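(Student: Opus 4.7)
The plan is to exploit the fact that, by the standing assumption in this section, $\phi$ has $m$ distinct positive (in particular real) eigenvalues $\mu_1,\dots,\mu_m$, so $\phi:N_\R\to N_\R$ is diagonalizable over $\R$. Fix a basis $e_1,\dots,e_m$ of $N_\R$ consisting of eigenvectors, with $\phi(e_i)=\mu_i e_i$.

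The first step is to restrict $\phi$ to $V$. Since $V$ is invariant, $\phi|_V:V\to V$ is a well-defined linear operator whose minimal polynomial divides the minimal polynomial of $\phi$. The latter equals $\prod_{i=1}^m(x-\mu_i)$, which splits into \emph{distinct} linear factors over $\R$. Hence the minimal polynomial of $\phi|_V$ is itself a product of distinct linear factors drawn from $\{x-\mu_i\}$, so $\phi|_V$ is diagonalizable. A basis of eigenvectors for $\phi|_V$ then provides a basis of $V$ consisting of eigenvectors of $\phi$, which is exactly what is claimed.

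If one prefers a direct argument avoiding the minimal polynomial, here is the alternative route I would sketch. Given $v\in V$, expand $v=\sum_{i=1}^m c_ie_i$ and consider the vectors $\phi^k(v)=\sum_i c_i\mu_i^k e_i$ for $k=0,1,\dots,m-1$, all of which lie in $V$ by invariance. Because the $\mu_i$ are pairwise distinct, the Vandermonde matrix $(\mu_i^k)_{0\le k\le m-1,\,1\le i\le m}$ is invertible, so each vector $c_ie_i$ can be written as a $\R$-linear combination of $\phi^0(v),\dots,\phi^{m-1}(v)$ and therefore lies in $V$. Thus $v$ is a sum of eigenvectors of $\phi$ contained in $V$, and letting $v$ vary proves that $V$ is spanned by eigenvectors of $\phi$.

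There is no real obstacle here: the key input is simply that $\phi$ has a simple spectrum over $\R$, which turns the statement into a standard fact in linear algebra. The only mild point to be careful about is invoking distinctness of the eigenvalues at exactly the right place (either to conclude that the minimal polynomial is squarefree, or equivalently to invoke invertibility of the Vandermonde matrix).
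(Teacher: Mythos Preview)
Your proof is correct; both the minimal-polynomial argument and the Vandermonde argument are valid and standard. The paper does not actually prove this lemma but simply cites it as a well-known exercise in Lang's \emph{Algebra}, so there is nothing to compare your approach against.
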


Given an invariant subspace $V\subseteq N_\R$, let $E_1,\dots, E_{\dim V}$ be the invariant eigenspaces of $\phi$, corresponding to the eigenvalues $\nu_1 >\dots > \nu_{\dim V}>0$, that span $V$.  For $1\leq j\leq \dim V$, let $V_j:=E_j\oplus\dots\oplus E_{\dim V}$. Then we have a filtration $V=V_1\supsetneq V_2\supsetneq \dots \supsetneq V_{\dim V+1}:=\{0\}$ and if $v\in V_j\setminus V_{j+1}$, then $\phi^n(v)\to E_j$ when $n\to\infty$.

\subsection{Invariant rational subspaces}\label{step2}
We say that a subspace $V\subseteq N_\R$ is \emph{rational} if $\overline{V\cap N_\Q}=V$. This is equivalent to the lattice $N\cap V$ having rank equal to $\dim V$. 
A subspace $V\subseteq N_\R$ is rational if and only if its 
annihilator $V^o:=\{\xi\in M_\R \mid \xi|_V\equiv 0\}\subseteq M_\R$ is rational. Note that $(V^o)^o=V$.

Assume that $V$ and $W$ are rational subspaces. Then $V+W$ is rational and hence so is $V\cap W=(V^o+W^o)^o$. Given $V\subseteq N_\R$ it follows that there is a minimal rational subspace of $N_\R$ that contains $V$ and a maximal rational subspace contained in $V$. 
\begin{lma}\label{rummena}
Assume that $V\subseteq N_\R$ is invariant under $\phi$. Then the minimal rational subspace that contains $V$ and the maximal rational subspace contained in $V$ are both invariant.
\end{lma}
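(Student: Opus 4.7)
The plan is to reduce everything to two rationality-preservation properties of $\phi$:
\begin{itemize}
\item[(a)] if $W\subseteq N_\R$ is a rational subspace, then so is $\phi(W)$;
\item[(b)] if $W\subseteq N_\R$ is a rational subspace, then so is $\phi^{-1}(W)$.
\end{itemize}
Property (a) follows because $W\cap N_\Q$ $\Q$-spans $W$, and $\phi$ is $\Q$-linear with $\phi(N_\Q)\subseteq N_\Q$, so $\phi(W\cap N_\Q)\subseteq \phi(W)\cap N_\Q$ spans $\phi(W)$ over $\Q$. Property (b) is most cleanly seen via annihilators: one checks $\phi^{-1}(W)^o=\phi^*(W^o)$, and since $\phi^*:M_\R\to M_\R$ is $\Z$-linear, it sends rational subspaces to rational subspaces, so $\phi^{-1}(W)^o$ is rational and hence $\phi^{-1}(W)=(\phi^{-1}(W)^o)^o$ is rational.

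With these in hand, let $W_{\min}$ denote the minimal rational subspace containing $V$. Since $V$ is invariant, $\phi(V)\subseteq V\subseteq W_{\min}$, so $V\subseteq \phi^{-1}(W_{\min})$. By (b), $\phi^{-1}(W_{\min})$ is rational, and intersections of rational subspaces are rational, so $W_{\min}\cap \phi^{-1}(W_{\min})$ is a rational subspace containing $V$. Minimality forces $W_{\min}\cap \phi^{-1}(W_{\min})=W_{\min}$, i.e.\ $\phi(W_{\min})\subseteq W_{\min}$.

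For the maximal rational subspace $W_{\max}\subseteq V$, one runs the dual argument using (a). By (a), $\phi(W_{\max})$ is rational, and sums of rational subspaces are rational (as already recorded in the excerpt), so $W_{\max}+\phi(W_{\max})$ is a rational subspace. It lies in $V$ because $W_{\max}\subseteq V$ and $\phi(W_{\max})\subseteq \phi(V)\subseteq V$. Maximality gives $W_{\max}+\phi(W_{\max})=W_{\max}$, i.e.\ $\phi(W_{\max})\subseteq W_{\max}$.

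The only real content is the two rationality-preservation statements (a) and (b); the minimality/maximality argument is then formal. The mildly subtle point is (b): one must not be misled into thinking $\phi^{-1}(W)$ is rational for "linear algebra" reasons alone — it is the $\Z$-linearity of $\phi$ (equivalently of $\phi^*$) that is used, and passing to $M_\R$ via annihilators makes this transparent.
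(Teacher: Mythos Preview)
Your proof is correct, and it takes a somewhat different route from the paper's. The paper argues, for the minimal rational subspace $W\supseteq V$, that $V=\phi(V)\subseteq\phi(W)$ (using that $\phi$ is invertible in the ambient setting, so $\phi(V)=V$), hence $V\subseteq W\cap\phi(W)$; since $\phi(W)$ is rational, minimality gives $W\subseteq\phi(W)$, and then a dimension count forces $\phi(W)=W$. You instead work with $\phi^{-1}(W_{\min})$ and use its rationality via the annihilator identity $\phi^{-1}(W)^o=\phi^*(W^o)$, which lets you conclude $\phi(W_{\min})\subseteq W_{\min}$ directly, without ever invoking invertibility of $\phi$. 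For the maximal rational subspace, the paper deduces the result from the first part by passing to annihilators and applying the minimal case to $\phi^*$ and $V^o$; you give a direct argument with $W_{\max}+\phi(W_{\max})$. Your approach is slightly more general (it works for any $\Z$-linear $\phi$, not just invertible ones) and makes explicit the two rationality-preservation facts (a) and (b) that drive everything; the paper's version is terser and leans on the standing hypothesis that $\phi$ has nonzero eigenvalues.
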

\begin{proof}
  Let $W$ be the minimal rational subspace that contains $V$. 
  Then $V\subseteq W\cap \phi (W)=W$, since $W$ is minimal. 
  To conclude, $\phi(W)=W$. The second statement follows from the first, 
  using annihilators.
\end{proof}

The mapping $\phi$ induces a binary tree $T(\phi)$ of rational invariant subspaces of $N_\R$, which should be compared to the real filtration in Section~\ref{step1}.
The nodes of $T(\phi)$ are of the form $(V,W)$, where $V$ and $W$ are rational invariant subspaces of $N_\R$, such that $V\subseteq W$. The root of $T(\phi)$ is $(\{0\},N_\R)$ and $(V,W)$ is a leaf if $V=W$. 
Assume that $V\neq W$. 
Among all one-dimensional eigenspaces $E$ of $V$ such that $E\subseteq W$, but $E\not\subseteq V$, let $E(V,W)$ be the one with the largest eigenvalue. 
Let $V'$ be the smallest rational subspace that contains $V+E(V,W)$ and let $W'$ be the largest rational subspace contained in $W\cap E(V,W)^\perp$. 
Then the two children of $(V,W)$ are $(V',W)$ and $(V,W')$. Note that $V'\subseteq W$ since $V$ and $E(V,W)$ are contained in $W$ and $W$ is rational, and that $V\subseteq W'$ since $W$ and $E(V,W)^\perp$ contain $V$ and $V$ is rational. Observe, in light of Lemma~\ref{rummena}, that $V'$ and $W'$ are invariant. 

\begin{lma}\label{dikotomi}
Let $(V,W)$ be a node in $T(\phi)$ and $U$ a rational invariant subspace such that $V\subseteq U\subseteq W$. Then either $E(V,W)\subseteq U$ or $U\subseteq E(V,W)^\perp$.
\end{lma}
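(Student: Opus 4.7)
The plan is to exploit the fact that the eigenvalues of $\phi$ are real, positive, and \emph{distinct}, so that $\phi$ is diagonalizable over $\R$ with one-dimensional eigenspaces $E_1,\dots,E_m$ giving $N_\R=\bigoplus_j E_j$. By Lemma~\ref{tuesday}, every $\phi$-invariant subspace of $N_\R$ is a sum of eigenvectors; combined with distinctness of eigenvalues, this forces any such subspace to be a sum of some subcollection of the $E_j$. Thus I would write $V=\bigoplus_{j\in I_V}E_j$, $W=\bigoplus_{j\in I_W}E_j$ and $U=\bigoplus_{j\in I_U}E_j$, where the inclusions $V\subseteq U\subseteq W$ translate into $I_V\subseteq I_U\subseteq I_W$.

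Next I would identify $E(V,W)$ with $E_{i_0}$ for the appropriate index $i_0\in I_W\setminus I_V$ (the one corresponding to the largest eigenvalue in $I_W\setminus I_V$, though the particular choice plays no role in the argument). The dichotomy of the lemma is then simply the tautology ``either $i_0\in I_U$ or $i_0\notin I_U$'': in the first case $E(V,W)=E_{i_0}\subseteq U$ by construction, so the first alternative of the lemma holds.

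The only step that needs a short verification is that in the second case, $i_0\notin I_U$ really implies $U\subseteq E(V,W)^\perp$. For this I would pick a dual basis $\widetilde e_1,\dots,\widetilde e_m\in M_\R$ to an eigenbasis $e_1,\dots,e_m$; a direct computation gives $(\phi^*\widetilde e_j)(e_k)=\widetilde e_j(\phi(e_k))=\mu_k\delta_{jk}=\mu_j\widetilde e_j(e_k)$, so $\phi^*\widetilde e_j=\mu_j\widetilde e_j$ and hence $\widetilde{E_{i_0}}=\R\widetilde e_{i_0}$. Then $E_{i_0}^\perp=\ker\widetilde e_{i_0}=\bigoplus_{j\neq i_0}E_j$, and $i_0\notin I_U$ gives $U\subseteq E(V,W)^\perp$ as required. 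There is no real obstacle here: once Lemma~\ref{tuesday} and the diagonalizability are in place, the lemma reduces to bookkeeping about which eigenspaces appear in the decomposition of $U$.
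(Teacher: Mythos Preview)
Your proof is correct and takes a genuinely different route from the paper's. The paper argues dynamically: assuming $U\not\subseteq E(V,W)^\perp$, it picks $v\in U$ with nonzero component along $E:=E(V,W)$, passes to the quotient $\widetilde W=W/V$, and uses that $\widetilde E$ carries the dominant eigenvalue of $\tilde\phi$ to show $\nu^{-n}\tilde\phi^n(\tilde v)$ converges into $\widetilde E\setminus\{0\}$; invariance of $\widetilde U$ then forces $\widetilde E\subseteq\widetilde U$ and hence $E\subseteq U$. You instead invoke Lemma~\ref{tuesday} up front to decompose every invariant subspace as $\bigoplus_{j\in I}E_j$, after which the dichotomy is the tautology ``$i_0\in I_U$ or $i_0\notin I_U$'', together with the easy identification $E_{i_0}^\perp=\bigoplus_{j\neq i_0}E_j$.

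Your approach is shorter and makes transparent that neither the rationality of $U$ nor the specific choice of $i_0$ (as the index of the largest eigenvalue in $I_W\setminus I_V$) is needed for the conclusion---the dichotomy holds for any eigenspace whatsoever. The paper's argument, by contrast, foreshadows the attraction arguments used later (e.g.\ in Lemma~\ref{attraherar}) and does not rely on having the full eigenspace decomposition of $U$ in hand, only on the dominance of one eigenvalue in the quotient; but since Lemma~\ref{tuesday} is already available, your route is the more economical one here.
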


\begin{proof}
Pick $x\in M_\R$, such that $E^\perp=\{x=0\}$, where $E:=E(V,W)$. Assume $U\not\subseteq E^\perp$, and pick $v \in U$, such that $x(v)\neq 0$. Then $v\notin E^\perp \supseteq V$. Let $\widetilde W=W/V$, let $\tilde\phi:\widetilde W\to\widetilde W$ be the map induced by $\phi$, and let $\widetilde E$, $\widetilde U$, and $\widetilde v$ be the images of $E$, $U$, and $v$, respectively, under the quotient map $W\to \widetilde W$. 
Then $\widetilde E$ is an eigenspace for $\tilde \phi$ with eigenvalue $\nu$ dominating all other eigenvalues of $\tilde\phi$. Thus $\nu^{-n}\tilde\phi^n(\tilde v)$ converges to a nonzero element of $\widetilde E$. 
This implies that $\widetilde E\subseteq \widetilde U$, since $\tilde v\in\widetilde U$ and $\widetilde U$ is invariant under $\tilde\phi$. It follows that $E\subseteq U$. 
\end{proof}

Let us create a new tree from $T(\phi)$. Replace each node $(V,W)$ in $T(\phi)$ by $V$ and thereafter collapse all edges between nodes $V$ and $V$. We will refer to the tree so obtained as the \emph{reduced tree} induced by $\phi$ and denote it by $\tred(\phi)$.  Observe that the nodes in $\tred(\phi)$ are in one-to-one correspondence with the leaves in $T(\phi)$. Given a node $V$ in $\tred (\phi)$ with parent $V'$, among all one-dimensional eigenspaces of $\phi$ in $V\setminus V'$, let $E(V)$ be the one corresponding to the largest eigenvalue. Then, by construction, $V$ is the smallest (invariant) rational subspace of $N_\R$ that contains $V'+E(V)$. 

We claim that all rational invariant subspaces of $N_\R$ are in $\tred(\phi)$. To see this, given a rational invariant subspace $U$, let $S(U)=\{(V,W)\in T(\phi) \mid V\subseteq U\subseteq W\}$. 
Note that $S(U)$ is non-empty, since $(\{0\},N_\R)\in S(U)$. Pick $(V,W)\in S(U)$. By Lemma~\ref{dikotomi} either $E:=E(V,W)\subseteq U$ or $U\subseteq E^\perp$. In the first case $V'\subseteq U$, where $V'$ is the smallest rational invariant subspace of $N_\R$ that contains $V+E$. In the second case $U\subseteq W'$, where $W'$ is the largest rational invariant subspace that is contained in $W\cap E^\perp$. Thus, exactly one of the children of $(V,W)$ is in $S(U)$. It follows that $S(U)$ is a maximal chain in $T(\phi)$. In particular $S(U)$ contains a leaf of  $T(\phi)$, which has to be of the form $(U,U)$. Hence $U$ is a node in $\tred(\phi)$. It is, however, not true that $(V,W)$ is a node in $T(\phi)$ as soon as $V\subseteq W$ are rational and invariant.

\subsection{Invariant chambers}\label{chambers}

With each node $(V,W)$ in $T(\phi)$ we associate a \emph{chamber} $C(V,W)$. The chamber $C(V,W)$ is an invariant open dense subset of $W$ and is defined recursively as follows. 
First let $C(\{0\},N_\R)=N_\R$.
Then, if $C(V,W)$ is defined and $(V',W)$, $(V,W')$ 
are the children of $(V,W)$, 
let $C(V',W):=C(V,W)\setminus E(V,W)^\perp$ and $C(V,W'):=C(V,W)\cap W'$. Note that $C(V,W)\cap N_\Q$ is a disjoint union of $C(V',W)\cap N_\Q$ and $C(V,W')\cap N_\Q$.  In particular, the chambers associated with the leaves of $T(\phi)$ induce invariant partitions of $N_\Q$ and $N$ (but not of $N_\R$, in general).

To the node $V$ in $\tred(\phi)$ associate the chamber $C(V):=C(V,V)$. Then the chambers $C(V)$ provide partitions of $N_\Q$ and $N$. More precisely, given a node $V'$ in $\tred(\phi)$, the chambers $C(V)$, where $V$ ranges over ancestors of $V'$ in $\tred (\phi)$, give partitions of $V'\cap N_\Q$ and $V'\cap N$. Assume that the genealogy of $V$ is the chain of nodes in $\tred(\phi)$:  
\begin{equation}\label{gen}
\{0\}=V_0\subsetneq V_1\subsetneq \dots \subsetneq V_s=V. 
\end{equation}
For $1\leq k\leq s$ pick $x_k\in M_\R$, such that $E(V_k)^\perp=\{x_k=0\}$. Then 
\begin{equation*}
C(V)=V\setminus \bigcup_{k=1}^s E(V_k)^\perp = 
V\cap \bigcap_{k=1}^s\{x_k\neq 0\}. 
\end{equation*}
Observe that $V$ is the smallest rational subspace of $N_\R$ that contains the subspaces $\{E(V_k)\}_{1\leq k\leq s}$.

\begin{lma}\label{newadapted}
Let $V$ be a node in $\tred(\phi)$ and let $v\in C(V)$. Then there is no rational invariant proper subspace $U\subsetneq V$ containing $v$.
\end{lma}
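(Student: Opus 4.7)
The plan is to argue by contradiction: assume $U\subsetneq V$ is a rational invariant subspace with $v\in U$, and derive that in fact $V\subseteq U$. Let $\{0\}=V_0\subsetneq V_1\subsetneq\dots\subsetneq V_s=V$ be the genealogy of $V$ in $\tred(\phi)$, as in~\eqref{gen}. I will prove by induction on $k$ that $V_k\subseteq U$; setting $k=s$ then yields the contradiction $V=V_s\subseteq U\subsetneq V$. The base case $k=0$ is immediate.

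For the inductive step, fix $k\ge 1$ and assume $V_{k-1}\subseteq U$. The path in $T(\phi)$ from the root $(\{0\},N_\R)$ to the leaf $(V,V)$ passes through a node of the form $(V_{k-1},W_k)$ at which the first coordinate jumps from $V_{k-1}$ to $V_k$, i.e.\ where a left-child step is taken. At such a node $E(V_{k-1},W_k)$ and $E(V_k)$ coincide, since each is characterized as the largest-eigenvalue eigenspace of $\phi$ that is contained in $V_k$ but not in $V_{k-1}$. Along this path the second coordinate shrinks monotonically from $N_\R$ to $V$, so $V\subseteq W_k$ and hence $U\subseteq V\subseteq W_k$. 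The hypotheses of Lemma~\ref{dikotomi} are therefore satisfied for the node $(V_{k-1},W_k)$ and the subspace $U$, giving either $E(V_k)\subseteq U$ or $U\subseteq E(V_k)^\perp$. The second alternative is impossible, because $v\in U$ whereas $v\in C(V)$ forces $v\notin E(V_k)^\perp$ by the explicit formula for $C(V)$ given just before the statement. Hence $E(V_k)\subseteq U$, and since $U$ is rational and invariant and contains $V_{k-1}+E(V_k)$, the defining minimality of $V_k$ as the smallest such subspace yields $V_k\subseteq U$, completing the induction.

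The only bookkeeping subtlety is the interplay between $T(\phi)$ and $\tred(\phi)$, namely checking at each induction step that one can locate a node $(V_{k-1},W_k)$ on the path with $E(V_{k-1},W_k)=E(V_k)$ and $V\subseteq W_k$; both follow directly from the construction of the binary tree $T(\phi)$ and the observation that the second coordinate is never enlarged as one descends.
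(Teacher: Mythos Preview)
Your proof is correct and follows essentially the same approach as the paper's: both show inductively that each $V_k$ in the genealogy lies in $U$, using the dichotomy of Lemma~\ref{dikotomi}. The only packaging difference is that you invoke Lemma~\ref{dikotomi} directly (after locating the node $(V_{k-1},W_k)$ on the path in $T(\phi)$ and checking its hypotheses), whereas the paper instead repeats the eigenvalue-convergence argument from that lemma's proof without explicitly identifying such a node; this is a cosmetic distinction rather than a different strategy.
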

\begin{proof}
Let \eqref{gen} be the genealogy of $V$ in $\tred(\phi)$, with corresponding $x_k\in M_\R$, and let $U$ be the smallest rational invariant subspace of $V$ containing $v$. Pick $r$ maximal such that $V_r\subseteq U$. Assume $r<s$. Since $v\in C(V)$,  $x_{r+1}(v)\neq 0$. By arguments as in the proof of Lemma~\ref{dikotomi} one can show that $E(V_{r+1})\subseteq U$. 
Since $V_{r+1}$ is the smallest rational invariant subspace of $V$ that contains $V_r+E(V_{r+1})$, $V_{r+1}\subseteq U$, which contradicts the maximality of $r$. Hence $U=V$, which proves the lemma. 
\end{proof}

The chamber $C(V)$ admits a further decomposition into $2^s$ connected components. Given $x_1,\dots, x_s\in M_\R$ and $\eta=(\eta_1,\dots, \eta_s)\in\{\pm 1\}^{s}$, let $C(V,\eta):=V\cap \bigcap_{j=1}^s\{\eta_jx_j>0\}$; we will refer to $\eta$ as a \emph{sign vector}. Then the $C(V,\eta)$ are clearly disjoint and $C(V)=\bigcup_{\eta\in\{\pm 1\}^s}C(V,\eta)$. Hence the chamber components $C(V,\eta)$, where $V$ ranges over the nodes in $\tred (\phi)$ and $\eta$ over possible sign vectors, provide partitions of $N$ and $N_\Q$. Moreover, if the eigenvalues of $\phi$ are positive, then each $C(V,\eta)$ is invariant under $\phi$. 
If $V'$ is an ancestor of $V$, say $V'=V_{s'}$, we will refer to $\eta':=(\eta_1,\dots, \eta_{s'})$
as the \emph{truncation} of $\eta=(\eta_1,\dots, \eta_{s'},\eta_{s'+1},\dots, \eta_s)$.

For each $(V,\eta)$, let $E(V,\eta):=E(V)\cap C(V,\eta)$ and let $e(V,\eta)\in N_\R$ be a generator for the ray $E(V,\eta)$. 

\subsection{Adapted system of cones}\label{adapt}
We define an \emph{adapted system of cones} to be a collection of simplicial cones $\sigma(V,\eta)$, where $V$ runs over the vertices in $\tred(\phi)$ and $\eta$ over possible sign vectors, that satisfies the following conditions
\begin{enumerate}
\item[(A1)]
$\interior \sigma(V,\eta)\subseteq C(V,\eta)$ and $\sigma(V,\eta)$ spans $V$
\item[(A2)]
if $V'\subseteq V$ is the parent of $V$ in $\tred(\phi)$, and $\pi:V\to V/V'$ is the natural projection, then $\sigma(V,\eta)\cap V'=\sigma(V',\eta')$, where $\eta'$ is the truncation of $\eta$, and $\pi(e(V,\eta))\in \interior \pi(\sigma(V,\eta))$.
\end{enumerate}
We say that the system is \emph{rational} if all cones $\sigma(V,\eta)$ are rational, and \emph{invariant} (under $\phi$) if each cone is invariant (under $\phi$).

\begin{lma}\label{attraherar}
Let $\mathcal S=\{\sigma(V,\eta)\}$ be an adapted system of cones and $v\in N$.  Then there exists $n_0=n_0(v)\in\N$ such that for $n\geq n_0$, $\phi^n(v)\in \sigma(V,\eta)$ for some $\sigma(V,\eta)\in\mathcal S$. More precisely, if $v\in C(V,\eta)$, then $\phi^n(v)\in\sigma(V,\eta)$ for $n\geq n_0$. 
\end{lma}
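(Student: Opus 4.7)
The plan is to prove Lemma~\ref{attraherar} by induction on the depth $s$ of $V$ in $\tred(\phi)$, strengthening the conclusion so that it applies to every $v \in V \cap C(V,\eta)$ rather than only to lattice points. This strengthening is forced on us because the inductive step will project $v$ into the parent subspace $V'$, producing a vector that is generally not in $N$. The base case $s = 0$ is trivial, since $V = \{0\}$ forces $v = 0$.

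For the inductive step with $s \geq 1$, let $V'$ be the parent of $V$ in $\tred(\phi)$, and set $d' := \dim V' < d := \dim V$ and $\eta' :=$ the truncation of $\eta$. By Lemma~\ref{tuesday}, $V$ is spanned by eigenvectors of $\phi$, so I write $V = V' \oplus W$ as a direct sum of $\phi$-invariant subspaces, with $W$ the sum of those eigenspaces of $\phi|_V$ not contained in $V'$. I fix generators $u_1,\dots,u_d$ of $\sigma(V,\eta)$ so that $u_1,\dots,u_{d'}$ generate the face $\sigma(V',\eta')$ (possible by (A2)), and set $W_0 := \spann(u_{d'+1},\dots,u_d)$. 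Expressing $\phi^n(v) = \sum_k a_k(\phi^n(v))\,u_k$, the argument splits into two pieces. The \emph{projection analysis}: the induced map $\tilde\phi$ on $V/V'$ has top eigenvalue $\nu := \nu(E(V))$, and by (A2) the element $\pi(e(V,\eta))$ lies in the interior of $\pi(\sigma(V,\eta))$. Since $v \in C(V,\eta)$ forces a nonzero component of $v$ along $E(V,\eta)$ with sign $\eta_s$, one obtains $\pi(\phi^n(v))/\nu^n \to c\,\pi(e(V,\eta))$ with $c > 0$, and hence $a_k(\phi^n(v)) > 0$ for all $k > d'$ and $n \gg 1$. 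The \emph{residue analysis}: the vector
\begin{equation*}
   R_n := \sum_{k \leq d'} a_k(\phi^n(v))\, u_k \in V'
\end{equation*}
satisfies, after a direct computation using $v = v_{V'} + v_W$,
\begin{equation*}
   R_n = \phi^n(v_{V'}) + c\,\nu^n\bigl(e(V,\eta) - e^{W_0}\bigr) + o(\nu^n),
\end{equation*}
where $e^{W_0}$ is the unique element of $W_0$ with the same $\pi$-image as $e(V,\eta)$. One verifies that $v_{V'} \in V' \cap C(V',\eta')$, since $x_k|_W \equiv 0$ for $k < s$ gives $x_k(v_{V'}) = x_k(v)$.

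The structural input I would rely on is an eigenvalue hierarchy: along the ancestor chain $\{0\} \subsetneq V_1 \subsetneq \cdots \subsetneq V_s = V$ in $\tred(\phi)$, the eigenvalues $\nu(E(V_k))$ are strictly decreasing in $k$. This follows from unwinding the $T(\phi)$ construction, since each left-child transition consumes the largest eigenvalue still available in the current $W$, and that eigenspace is thereafter contained in the new first coordinate and cannot reappear. Consequently the top eigenvalue of $\phi|_{V'}$ equals $\mu := \nu(E(V_1)) > \nu$, and the chamber condition at level $1$ forces the $E(V_1)$-component of $v$, hence of $v_{V'}$, to be nonzero, so $\phi^n(v_{V'})$ grows at the strictly faster rate $\mu^n$.

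Applying the strengthened inductive hypothesis to $v_{V'}$ yields $\phi^n(v_{V'}) \in \sigma(V',\eta')$ for $n \geq n_1$. The step I expect to be delicate is showing that the subdominant perturbation $c\,\nu^n\bigl(e(V,\eta) - e^{W_0}\bigr) + o(\nu^n)$ does not push $R_n$ out of $\sigma(V',\eta')$. The issue is that the normalized limit $\phi^n(v_{V'})/\mu^n$ converges to the direction of $E(V_1)$, which sits on the face $\sigma(V_1,\eta^{(1)})$ of $\sigma(V',\eta')$ rather than in its strict interior, so some coordinates $a_k(\phi^n(v_{V'}))$ grow strictly slower than $\mu^n$. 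The way forward is to iterate the hierarchy argument down the ancestor chain: whenever a coordinate of $\phi^n(v_{V'})$ fails to inherit the leading growth rate, its effective rate is still controlled by a sub-dominant eigenvalue $\nu(E(V_k))$ with $k > 1$, which by strict monotonicity still exceeds $\nu = \nu(E(V_s))$. This coordinate-by-coordinate bookkeeping through the genealogy, facilitated by the strict decrease of eigenvalues along the chain, is where the bulk of the work lies.
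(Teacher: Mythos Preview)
Your inductive route is sound, but it is more circuitous than the paper's. The paper works directly with the dual cone: writing the genealogy $\{0\}=V_0\subsetneq\cdots\subsetneq V_s=V$, condition~(A2) forces the dual cone $\sigma(V,\eta)^*$ to be generated by functionals $\xi_{k,j}$ satisfying $\xi_{k,j}|_{V_{k-1}}=0$ and $\langle\xi_{k,j},e_k\rangle>0$. Regarding each $\xi_{k,j}$ as an element of $(V/V_{k-1})^*$, on which $\phi^*$ has top eigenvalue $\nu_k$ (your eigenvalue hierarchy), one gets $\nu_k^{-n}\phi^{*n}\xi_{k,j}\to c_{k,j}x_k$ with $c_{k,j}>0$; the chamber condition $\langle x_k,v\rangle>0$ then gives $\langle\xi_{k,j},\phi^n(v)\rangle=\langle\phi^{*n}\xi_{k,j},v\rangle>0$ for all large $n$, and the lemma follows in one stroke. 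No induction, no residue bookkeeping.

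In your scheme the induction hypothesis as stated---mere eventual membership $\phi^n(v_{V'})\in\sigma(V',\eta')$---is too weak for the residue step: to absorb the $O(\nu^n)$ perturbation you need a \emph{lower} bound $a_k(\phi^n(v_{V'}))\gtrsim\nu_j^n$ for each level-$j$ coordinate, with $\nu_j>\nu=\nu_s$. You anticipate this and propose to ``iterate the hierarchy argument down the ancestor chain'', and that does work; but filling it in amounts to showing, for each $k\le d'$ at level $j$, that the dual functional $\xi_k$ vanishes on $V_{j-1}$ and pairs positively with $e_j$ (which~(A2) guarantees), so that $\langle\xi_k,\phi^n(v_{V'})\rangle\sim c\,\nu_j^n$. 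At that point you have reproduced the paper's argument coordinate by coordinate, and the inductive framing has become scaffolding rather than substance. Your verifications that $v_{V'}\in C(V',\eta')$ and that $\nu(E(V_k))$ is strictly decreasing along the genealogy are correct and are exactly the facts the paper uses.
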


\begin{proof}
From Section~\ref{chambers} we know that $v$ is contained in a unique chamber $C(V,\eta)$. Let $\sigma=\sigma(V,\eta)$ be the corresponding cone in $\mathcal S$. Assume that \eqref{gen} 
is the genealogy of $V$ in $\tred(\phi)$. Write $e_k:=e(V_k,\eta^k)$ and $\sigma_k:=\sigma(V_k,\eta^k)$, where $\eta^k$ is the truncation of $\eta$. 
Then for $1\leq k\leq s$ the cone $\sigma_k\in \mathcal S$ is of the form 
$\sigma_k=\sigma_{k-1}+\sum_{j=1}^{m_k}\R_+v_{k,j}$, where $\sigma_0=\{0\}$, $m_k:=\dim V_k -\dim V_{k-1}$,  and $v_{k,j}\in C(V_k,\eta^k)$, so that $\sigma=\sigma_s=\sum_{k=1}^s\sum_{j=1}^{m_k}\R_+v_{k,j}$. Moreover, $\pi_k(e_k)\in\interior \pi_k(\sum_{j=1}^{m_k}\R_+ v_{k,j})$, where $\pi_k:V_k\to V_k/V_{k-1}$ is the natural projection. 

For $1\leq k\leq s$, choose $x_k\in V^*$ such that $\langle x_k,e_j\rangle=\delta_{kj}$.
We 
identify $(V/V_{k-1})^*$ with $\{\xi\in V^*\mid \xi|_{V_{k-1}}=0\}$. Then $\phi^*$ induces a self-mapping on $(V/V_{k-1})^*$ and if $\langle \xi, e_k\rangle>0$, then $\phi^{*n}\xi\to\R_+ x_k$ when $n\to\infty$. Indeed, the subspace $\R_+x_k\subseteq (V/V_{k-1})^*$ is the one with the largest eigenvalue. 

The dual cone $\sigma^*$ of $\sigma$ is of the form $\sigma^*=\sum_{k=1}^{s}\sum_{j=1}^{m_k}\R_+ \xi_{k,j}$, where $\xi_{k,j}\in\ker (V^*\to V_{k-1}^*)\cong (V/V_{k-1})^*$ and $\langle \xi_{k,j}, e_k \rangle >0$; in particular, $\phi^{*n}\xi_{k,j}\to \R_+x_k$. 

Since $v\in C(V,\eta)$, $\langle x_k,v\rangle >0$ for $1\leq k \leq s$. By continuity, there is an $n_0=n_0(v)\in \N$ such that $\langle \xi_{k,j},\phi^n(v)\rangle=\langle \phi^{*n}\xi_{k,j}, v\rangle>0$ for $1\leq k\leq s$, $1\leq j\leq m_k$ and $n\geq n_0$. Thus $\phi^n(v)\in\interior \sigma$ for $n\geq n_0$. 
\end{proof}

\begin{remark}\label{attrmk}
As can be seen from the proof, the first part of Lemma~\ref{attraherar} 
remains valid if some of the eigenvalues $(\mu_i)_{i=1}^m$
of $\phi$ are negative as long as $|\mu_1|>\dots > |\mu_m|>0$. In general, if $v\in C(V)$, then, for $n\geq n_0$,  $\phi^n(v)\in\sigma(V,\eta)$ for some sign vector $\eta$. 
\end{remark}

\begin{lma}\label{invariant}
Let $\mathcal S= \{\sigma(V,\eta)\}$ be an adapted system of cones. Then there exists $n_0\in\N$, such that $\mathcal S$ is invariant under $\phi^n$ for $n\geq n_0$. 
\end{lma}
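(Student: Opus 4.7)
The plan is to induct on the depth $s$ of $V$ in the reduced tree $\tred(\phi)$, relying on the already-proved Lemma~\ref{attraherar} to handle individual rays and combining with the recursive structure imposed by (A2). Since $\mathcal S$ is finite, it suffices to produce, for each cone, a threshold $n_0$; the maximum of these will work uniformly.

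The base case $s=0$ is trivial since $\sigma(\{0\})=\{0\}$. For the inductive step, let $V$ have depth $s\geq 1$, let $V'$ be its parent in $\tred(\phi)$, and suppose that for every cone $\sigma(V'',\eta'') \in \mathcal S$ with $V''$ a proper ancestor of $V$ one has $\phi^n(\sigma(V'',\eta'')) \subseteq \sigma(V'',\eta'')$ for $n$ large. Fix $\eta$ and write $\sigma = \sigma(V,\eta)$. By (A2) we have
\[
\sigma = \sigma(V',\eta') + \sum_{j=1}^{m_s} \R_+ v_j,
\]
with $v_j$ the new generators at level $s$, and $\phi^n(\sigma)$ is the convex cone spanned by the images of these generators together with those of $\sigma(V',\eta')$. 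The rays of the subsumed cone $\sigma(V',\eta') \subseteq \sigma$ are already absorbed into $\sigma$ after finitely many iterations by the inductive hypothesis.

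The new content is to control each extreme ray $\R_+ v_j$. Applying Lemma~\ref{attraherar} to a lattice representative on $\R_+ v_j$, one obtains $n_0(v_j)$ and a unique pair $(V_j,\eta_j)$ with $v_j \in C(V_j,\eta_j)$ and $\phi^n(v_j) \in \sigma(V_j,\eta_j)$ for $n \geq n_0(v_j)$, where $V_j$ is the minimal rational invariant subspace containing $v_j$. In the generic situation $V_j = V$, the inclusion $v_j \in \overline{C(V,\eta)}$ together with the disjointness of distinct chambers at a fixed node forces $\eta_j = \eta$, and hence $\phi^n(v_j) \in \sigma$ directly. Taking $n_0 = \max_j n_0(v_j)$ and then a further maximum over the finitely many cones of $\mathcal S$ yields a uniform threshold.

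The main obstacle is the possibility $V_j \subsetneq V$, in which case $V_j$ is a ``cousin'' of $V$ in $\tred(\phi)$ rather than an ancestor, so a priori $\sigma(V_j,\eta_j)$ bears no direct relation to $\sigma$. To defeat this case I would argue, via the condition $\pi(e(V,\eta)) \in \interior \pi(\sigma)$ from (A2) together with Lemma~\ref{tuesday} and Lemma~\ref{newadapted}, that the orbit of $v_j$ spans an invariant rational subspace forced to fit compatibly into the genealogical chain of $V$: concretely, the intersection $\sigma \cap V_j$ must coincide with the cone $\sigma(V_j,\eta_j)$ of the adapted system, so that $\phi^n(v_j) \in \sigma(V_j,\eta_j) = \sigma \cap V_j \subseteq \sigma$ for $n \geq n_0(v_j)$. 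Establishing this compatibility between the intersections of adapted cones with rational invariant subspaces---using the minimality of $V$ over $V'+E(V)$ and the chamber partition of $N_\Q$ from Section~\ref{chambers}---is the technical heart of the argument.
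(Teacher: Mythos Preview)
Your inductive strategy over $\tred(\phi)$, using Lemma~\ref{attraherar} to control the new extreme rays at each node, is exactly the paper's approach. The paper's argument is shorter because it asserts outright that the new generators $v_j$ lie in $C(V,\eta)$: once this is granted, the precise clause of Lemma~\ref{attraherar} gives $\phi^n(v_j)\in\interior\sigma(V,\eta)$ for $n\ge n_0(v_j)$ directly, and your ``main obstacle'' case $V_j\subsetneq V$ simply does not arise (by Lemma~\ref{newadapted}, $v_j\in C(V,\eta)$ forces the minimal rational invariant subspace through $v_j$ to be $V$ itself).

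Your proposed treatment of the case $V_j\subsetneq V$ has a genuine gap. The axioms (A1)--(A2) relate $\sigma(V,\eta)$ only to the cone at its \emph{parent} $V'$; they impose no compatibility between $\sigma(V,\eta)$ and cones $\sigma(V_j,\eta_j)$ at nodes $V_j$ lying outside the genealogy of $V$. Your key claim $\sigma(V,\eta)\cap V_j=\sigma(V_j,\eta_j)$ therefore has no reason to hold. Concretely, in dimension three with integer eigenvalues take $V=V_{123}$ with parent $V_{12}$, choose $\sigma(V_{12},\eta')=\R_+e_1+\R_+(ce_1+de_2)$ with $c,d>0$, and take the new ray $v=ae_2+be_3\in V_{23}$ with $a,b>0$. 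This satisfies (A1)--(A2), yet $V_j=V_{23}$ sits in a different branch of $\tred(\phi)$, the intersection $\sigma(V_{123},\eta)\cap V_{23}=\R_+v$ is one-dimensional while $\sigma(V_{23},\eta_j)$ is two-dimensional, and in fact $\phi^n(v)\to\R_+e_2\not\subseteq\sigma(V_{123},\eta)$. So the compatibility you propose to establish is not a consequence of the definition, and the argument cannot be completed along the lines you sketch. (A minor additional point: applying Lemma~\ref{attraherar} to a ``lattice representative'' on $\R_+v_j$ presupposes rationality of the system, which is not part of the hypothesis; the proof of Lemma~\ref{attraherar} does however go through for any real $v\in C(V,\eta)$, so this is harmless once one has $v_j\in C(V,\eta)$.)
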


\begin{proof}
To each node $V$ in $\tred(\phi)$,  we will associate $n_0(V)\in\N$, 
such that $\phi^n(\sigma(V,\eta))\subseteq\sigma(V,\eta)$ for all sign vectors $\eta$ and $n\geq n_0(V)$; this is done by induction over $\tred(\phi)$. 

Set $n_0(\{0\})=0$. Let $V$ be a node in $\tred (\phi)$, such that $n_0(V')$ is defined, where $V'$ is the parent of $V$. Pick a sign vector $\eta$ and let $\eta'$ be the truncation. Then $\sigma(V,\eta)$ is of the form $\sigma(V,\eta)=\sigma(V',\eta')+\sum_{j=1}^{m'}\R_+v_j$ for some $v_j\in C(V, \eta)$ and $m'=\dim V-\dim V'$. From Lemma~\ref{attraherar} we know that for $1\leq j\leq m'$, there is a $n_0(v_j)\in \N$, such that $\phi^n(v_j)\in\interior\sigma(V,\eta)$ for $n\geq n_0(v_j)$. Let $n_0(V,\eta):=\max_{1\leq j\leq m'} n_0(v_j)$, and let $n_0(V):=\max(n_0(V'), \max_\eta n_0(V,\eta))$. 

Finally, set $n_0:=\max_{V\in\tred(\phi)}n_0(V)$. Then $n_0$ has the desired properties. 
\end{proof}

\begin{remark}\label{lol}
Following the proof of Lemma~\ref{invariant}, one can prove that if the eigenvalues of $\phi$  satisfy $|\mu_1|>\dots>|\mu_m|>0$, then there exists $n_0\in \N$, such that, for $n\geq n_0$, $\phi^n$ maps each $\sigma(V,\eta)$ into $\sigma(V,\eta')$ for some sign vector $\eta'$.
\end{remark}

The idea of the proofs of Theorems A and A' is to refine $\Delta$ so that it contains an invariant (under $\phi$ and $\phi^n$, respectively) adapted system of rational cones. 
Then the results follow by applying Corollary~\ref{invkoner}. 
First we need a few preliminary results on adapted systems of cones.

\begin{lma}\label{projicering}
Let $\Delta$ be a fan in $N$ and write $V=N_\R$. Given $\sigma'\in\Delta$, let $V'=\spann \sigma'$, and let $\pi:V\to V/V'$ be the natural projection. Then for each $v\in V/V'$, there exists at most one $\sigma$, such that $\sigma\supseteq \sigma'$ and $v\in\interior \pi(\sigma)$. If $\Delta$ is complete, there is a unique such $\sigma$.
\end{lma}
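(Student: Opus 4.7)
The plan is to recognize the collection $\{\pi(\sigma):\sigma\in\Delta,\,\sigma\supseteq\sigma'\}$ as the cones of a fan $\bar\Delta$ in $V/V'$---the \emph{star fan} of $\sigma'$. Uniqueness will then follow from the disjointness of relative interiors of distinct cones in a fan, and existence under completeness will reduce to showing $|\bar\Delta|=V/V'$.

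Write $\stjarna(\sigma'):=\{\sigma\in\Delta:\sigma\supseteq\sigma'\}$. For every $\sigma\in\stjarna(\sigma')$, the identity $\sigma'=\sigma'\cap\sigma$ exhibits $\sigma'$ as the intersection of two cones of $\Delta$, hence as a face of $\sigma$. I would first check that each $\pi(\sigma)$ is strictly convex: the key point is $\sigma\cap V'=\sigma'$, which holds because $\sigma'=\sigma\cap m^\perp$ for some $m\in\sigma^\vee$, and any $x\in\sigma\cap V'$ satisfies $m(x)=0$ by linearity. Then if $x,-x\in\pi(\sigma)$ with $x=\pi(u)$, $-x=\pi(u')$ and $u,u'\in\sigma$, one has $u+u'\in\sigma\cap V'=\sigma'$; the face property forces $u,u'\in\sigma'$ and so $x=0$. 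A dimension count then shows $\sigma\mapsto\pi(\sigma)$ is a bijection on $\stjarna(\sigma')$.

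Next I would verify the remaining fan axioms for $\bar\Delta$. The faces of $\pi(\sigma)$ are precisely the images of faces of $\sigma$ containing $\sigma'$, so $\bar\Delta$ is closed under taking faces. For the intersection axiom, the identity $\pi(\sigma_1)\cap\pi(\sigma_2)=\pi(\sigma_1\cap\sigma_2)$ is the crux; the nontrivial inclusion uses $V'=\sigma'-\sigma'$: given $\pi(u_1)=\pi(u_2)$ with $u_i\in\sigma_i$, write $u_1-u_2=s_1-s_2$ with $s_1,s_2\in\sigma'$, so $u_1+s_2=u_2+s_1$ is a common preimage lying in $\sigma_1\cap\sigma_2$. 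Since $\sigma_1\cap\sigma_2$ is a face of each $\sigma_i$ containing $\sigma'$, its image is a face of each $\pi(\sigma_i)$.

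Uniqueness is then immediate from disjointness of relative interiors in $\bar\Delta$ together with the bijection. For existence when $\Delta$ is complete, I would show $|\bar\Delta|=V/V'$ as follows: given $v\in V/V'$, pick $w\in\interior\sigma'$ and a lift $\tilde v\in V$. For sufficiently small $\epsilon>0$, the point $w+\epsilon\tilde v$ lies in a cone $\sigma\in\stjarna(\sigma')$---this uses the standard fact that any neighborhood of $\interior\sigma'$ in $|\Delta|$ is contained in $|\stjarna(\sigma')|$, since the relative interiors of cones of $\Delta$ partition $|\Delta|$ and accumulate at $\interior\sigma'$ only from cones having $\sigma'$ as a face. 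Then $\epsilon v=\pi(w+\epsilon\tilde v)\in|\bar\Delta|$, and positive homogeneity gives $v\in|\bar\Delta|$. The main obstacle I anticipate is verifying the fan axioms for $\bar\Delta$, in particular the intersection identity; but once that short algebraic rearrangement is in hand, everything else is routine.
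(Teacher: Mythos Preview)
Your proof is correct and follows essentially the same approach as the paper: both recognize that $\{\pi(\sigma):\sigma\supseteq\sigma'\}$ is the star fan of $\sigma'$ in $V/V'$ and deduce the lemma from the fan axioms together with preservation of completeness. The only difference is that the paper simply cites Fulton~\cite[Section~3.1]{F1} for the fact that the star is a fan, that it is complete when $\Delta$ is, and that $\sigma\mapsto\pi(\sigma)$ is a bijection, whereas you write out these verifications (strict convexity via $\sigma\cap V'=\sigma'$, the intersection identity via $V'=\sigma'-\sigma'$, and completeness via the local structure near $\interior\sigma'$) by hand.
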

\begin{proof}
Let $\stjarna (\sigma'):=\{\sigma\in\Delta \mid \sigma \supseteq \sigma'\}$. Then $\Delta_{\sigma'}:=\{\pi(\sigma) \mid \sigma\in\stjarna ({\sigma'})\}$ is a fan in $N/N'$, where $N'$ is the sublattice of $N$ generated by ${\sigma'}\cap N$, see~\cite[Section~3.1]{F1}. If $\Delta$ is complete, then so is $\Delta_{{\sigma'}}$. Moreover, there is a one-to-one correspondence between the cones in $\stjarna({\sigma'})$ and $\Delta_{\sigma'}$. In particular there is a most one cone in $\stjarna (\sigma')$, such that $\pi(\sigma)$ contains $v$, and if $\Delta$ is complete there exists such a $\sigma$. 
\end{proof}

\begin{lma}\label{maxett}
  Any fan $\Delta$ admits at most one rational adapted system of cones.
\end{lma}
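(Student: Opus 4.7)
The plan is to establish uniqueness by induction on the depth $s$ of the node $V$ in $\tred(\phi)$, using Lemma~\ref{projicering} as the key tool. The base case is trivial: the root of $\tred(\phi)$ is $\{0\}$, the corresponding (empty) sign vector admits the unique cone $\sigma(\{0\},\cdot)=\{0\}$, which lies in every fan.

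For the inductive step, fix a node $V$ of depth $s\ge1$ with parent $V'$, and fix a sign vector $\eta$ for $V$ with truncation $\eta'$. By the inductive hypothesis, the cone $\sigma(V',\eta')$ in any rational adapted system of cones in $\Delta$ is uniquely determined. Now, condition~(A2) forces $\sigma(V,\eta)$ to contain $\sigma(V',\eta')$ as the face $\sigma(V,\eta)\cap V'$ (and since $\sigma(V',\eta')$ spans $V'$, this face is indeed $\sigma(V',\eta')$ itself), and forces $\pi(e(V,\eta))\in\interior\pi(\sigma(V,\eta))$, where $\pi:N_\R\to N_\R/V'$ is the quotient map.

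I would then invoke Lemma~\ref{projicering} with the data $\sigma'=\sigma(V',\eta')\in\Delta$ and $v=\pi(e(V,\eta))\in V/V'\subseteq N_\R/V'$. The lemma guarantees that there is at most one cone $\sigma\in\Delta$ with $\sigma\supseteq\sigma(V',\eta')$ and $v\in\interior\pi(\sigma)$. Since $\sigma(V,\eta)$ is such a cone, it is the unique one, which completes the induction and hence the proof.

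The step I expect to be the main (if minor) obstacle is making sure the hypotheses of Lemma~\ref{projicering} are really being applied correctly—specifically, checking that condition~(A2) gives $\sigma(V,\eta)\supseteq\sigma(V',\eta')$ (not merely some shared face), and that the element $\pi(e(V,\eta))$ lies in the relative interior of the image cone. Both follow directly from (A2) combined with the fact that $\sigma(V',\eta')$ spans $V'$, so no new ideas beyond bookkeeping are needed. Note that the rationality hypothesis plays no explicit role in the argument beyond ensuring that the cones in question are cones of the fan $\Delta$, which is all that Lemma~\ref{projicering} requires.
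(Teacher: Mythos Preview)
Your proposal is correct and follows essentially the same approach as the paper: both argue by induction over $\tred(\phi)$ and invoke Lemma~\ref{projicering} with $\sigma'=\sigma(V',\eta')$ to pin down $\sigma(V,\eta)$. The only cosmetic difference is that the paper first restricts to the sub-fan of cones in $\Delta$ contained in $V$ (and applies Lemma~\ref{projicering} with ambient space $V$), whereas you apply the lemma in all of $N_\R$; for uniqueness this makes no difference.
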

\begin{proof}
Let $V$ be a node in $\tred (\phi)$ and let $\eta$ be a sign vector. Note that the collection of cones in $\Delta$ that are contained in $V$ form a fan. Suppose that $\sigma'\in\Delta$ spans $V'\subseteq V$. Then, by Lemma~\ref{projicering}, there is at most one cone $\sigma\in\Delta$, such that $\sigma'\subseteq\sigma\subseteq V$  and such that $\interior\pi(\sigma)\ni\pi(e(V,\eta))$, where $\pi$ is the projection $\pi:V\to V/V'$. Thus there is at most one cone $\sigma(V,\eta)$ satisfying (A1) and (A2).
\end{proof}

\begin{lma}\label{forfinas}
Let $\Delta$ be a fan in $N$. Assume that $\Delta$
contains an adapted system of cones and that 
for every invariant rational subspace $V$ of $N_\R$, there is a subfan of $\Delta$ whose support equals $V$. Then every refinement of $\Delta$ 
contains a unique adapted system of cones.
\end{lma}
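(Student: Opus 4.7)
The uniqueness assertion is immediate from Lemma~\ref{maxett}, so I will focus on existence. My plan is to construct the cones $\sigma'(V,\eta)$ of an adapted system in $\Delta'$ by induction on the reduced tree $\tred(\phi)$, with a secondary induction over sign vectors $\eta$. This mirrors the recursive structure built into the definition of an adapted system via conditions (A1) and (A2).

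At the root of $\tred(\phi)$, namely $V=\{0\}$, I set $\sigma'(\{0\})=\{0\}$. For the inductive step, let $V$ be a non-root node with parent $V'$, and fix a sign vector $\eta$ with truncation $\eta'$. Assume $\sigma'(V',\eta')\in\Delta'$ has already been constructed and satisfies (A1) and (A2). The key observation is that the hypothesis on subfans lifts from $\Delta$ to $\Delta'$: the cones of $\Delta'$ contained in $V$ form a subfan $\Sigma'\subseteq\Delta'$ with $|\Sigma'|=V$, which is therefore complete as a fan in the ambient space $V$, and $\sigma'(V',\eta')\in\Sigma'$. Let $\pi\colon V\to V/V'$ denote the natural projection. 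By Lemma~\ref{projicering} applied to the complete fan $\Sigma'$ in $V$, the cone $\sigma'(V',\eta')\in\Sigma'$, and the vector $\pi(e(V,\eta))\in V/V'$, there is a unique $\sigma\in\Sigma'$ containing $\sigma'(V',\eta')$ with $\pi(e(V,\eta))\in\interior\pi(\sigma)$. I define $\sigma'(V,\eta):=\sigma$.

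The verification that $\{\sigma'(V,\eta)\}$ defines an adapted system amounts to checking (A1) and (A2). The second half of (A2) is built into the construction. The equality $\sigma'(V,\eta)\cap V'=\sigma'(V',\eta')$ in (A2) follows from a dimension count once simpliciality is known: $\sigma'(V',\eta')$ is a face of $\sigma'(V,\eta)$ of dimension $\dim V'$, and in a simplicial cone any face contained in $V'$ of that dimension must coincide with $\sigma'(V',\eta')$. The interior condition $\interior\sigma'(V,\eta)\subseteq C(V,\eta)$ in (A1) comes from the invariance of the chamber decomposition together with the relation between $\sigma'(V,\eta)$ and the original adapted cone $\sigma(V,\eta)\in\Delta$, which is subdivided by the refinement. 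The main obstacle I anticipate is showing that $\sigma'(V,\eta)$ spans $V$, equivalently that $\pi(\sigma'(V,\eta))$ is a maximal cone in the star fan of $\Sigma'$ at $\sigma'(V',\eta')$. Here one compares the star fan of $\Sigma'$ with that of the corresponding subfan $\Sigma$ of $\Delta$: in $\Sigma$'s star fan, $\pi(\sigma(V,\eta))$ is maximal-dimensional and contains $\pi(e(V,\eta))$ in its relative interior, and the refinement passes to $\Sigma'$'s star fan. Tracking this refinement and ruling out that a new subdivision cuts precisely through the ray $\pi(\R_+e(V,\eta))$ is the delicate part; once it is done, simpliciality of $\sigma'(V,\eta)$ follows from the simpliciality of $\sigma'(V',\eta')$ together with the fact that rays of $\sigma'(V,\eta)$ outside $V'$ project bijectively to rays of $\pi(\sigma'(V,\eta))$.
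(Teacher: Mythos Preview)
Your inductive construction via Lemma~\ref{projicering} matches the paper's approach, and your verification outline is on the right track. The gap is exactly where you locate it: showing that $\sigma'(V,\eta)$ spans $V$. Your star-fan comparison is not wrong in principle, but you leave the decisive step (``ruling out that a new subdivision cuts precisely through the ray $\pi(\R_+e(V,\eta))$'') undone, and that step is in fact the heart of the matter.

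The paper resolves this cleanly with a rationality argument that bypasses any star-fan bookkeeping. Since $\pi(e(V,\eta))\in\interior\pi(\sigma'(V,\eta))$, some $v\in\sigma'(V,\eta)$ satisfies $v-e(V,\eta)\in V'$, hence $e(V,\eta)\in\spann\sigma'(V,\eta)$. Combined with $V'=\spann\sigma'(V',\eta')\subseteq\spann\sigma'(V,\eta)$ from the inductive hypothesis, this gives $V'+E(V)\subseteq\spann\sigma'(V,\eta)$. Now $\spann\sigma'(V,\eta)$ is a \emph{rational} subspace (the cone lies in the rational fan $\Delta'$), and by the very definition of $\tred(\phi)$ the node $V$ is the smallest rational subspace containing $V'+E(V)$; therefore $\spann\sigma'(V,\eta)=V$. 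This is the missing idea, and it simultaneously explains why your feared scenario cannot occur: any rational cone in $V$ whose projection to $V/V'$ contains $\pi(e(V,\eta))$ in its relative interior is forced to span all of $V$. The same mechanism, applied in $\Delta$ rather than $\Delta'$, makes precise your vague ``relation between $\sigma'(V,\eta)$ and $\sigma(V,\eta)$'': one shows that the smallest cone of $\Delta$ containing $\sigma'(V,\eta)$ is exactly $\sigma(V,\eta)$ (using the subfan hypothesis on $V$ and $V'$ together with the uniqueness in Lemma~\ref{projicering} for $\Delta$), whence $\interior\sigma'(V,\eta)\subseteq\interior\sigma(V,\eta)\subseteq C(V,\eta)$.
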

\begin{proof}
Uniqueness follows from Lemma~\ref{maxett}. 
Assume that $\Delta$ satisfies the assumption of the lemma and let $\mathcal S=\{\sigma(V,\eta)\}$ denote the adapted system of cones. Let $\Delta'$ be a refinement of $\Delta$. 

We will inductively find cones $\tau(V,\eta)\in\Delta'$ that satisfy (A1) and (A2). 
Let $\tau(\{0\},\eta):=\{0\}$. Let $V$ be a node in $\tred (\phi)$, with parent $V'$, and $\eta$ a sign vector. Assume that we have found $\tau'=\tau(V',\eta')$, where $\eta'$ is the truncation of $\eta$. Note that $\sigma(V',\eta')$ then is the smallest cone in $\Delta$ that contains $\tau'$. Let $\pi:V\to V/V'$ be the natural projection. Since there is a subfan of $\Delta'$ with support $V'$, Lemma~\ref{projicering} asserts that there is a unique cone $\tau\subseteq V$, that contains $\tau'$ and satisfies that $\pi(e(V,\eta))\in\interior \pi(\tau)$. In particular, there is $v\in \tau$, such that $\pi (v)=\pi (e(V,\eta))$, that is, $v=e(V,\eta)+v'$ for some $v'\in V'$. Thus $\spann \tau$ contains $E(V)$ and since it also contains $V'$ and is rational, it follows that $\spann \tau$ contains $V$. Hence $\tau$ spans $V$. 

It remains to show $\interior \tau\subseteq C(V,\eta)$. Let $\tilde\sigma$ be the smallest cone in $\Delta$ that contains $\tau$. Then $\tilde\sigma$ spans $V$, since there is a subfan of $\Delta$ with support $V$, and $\pi(e(V,\eta))\in\interior \pi(\tilde\sigma)$. Moreover, since there is a subfan of $\Delta$ with support $V'$, $\tilde\sigma\cap V'$ is a face of $\tilde\sigma$ and thus $\tilde\sigma\cap V'\in\Delta$. On the other hand $\tilde\sigma \supseteq \tau\supseteq \tau'$ so that $\tilde\sigma\cap V'\supseteq \tau'$. Since $\tau'$ has maximal dimension in $V'$, $\tilde \sigma \cap V'=\sigma(V',\eta')$ and thus $\tilde\sigma\supseteq \sigma(V',\eta')$. Now Lemma~\ref{projicering} implies that, in fact, $\tilde\sigma=\sigma(V,\eta)$. Hence $\interior\tau(V,\eta)\subseteq\interior \sigma (V,\eta)\subseteq C(V,\eta)$. To conclude, $\tau(\sigma, V)$ satisfies (A1) and (A2).
\end{proof}

\begin{lma}\label{rationella}
There exists a rational adapted system of cones. 
\end{lma}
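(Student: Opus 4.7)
The plan is to construct the cones $\sigma(V,\eta)$ by induction on the tree $\tred(\phi)$, starting at the root and extending one node at a time. At the root $V=\{0\}$ we set $\sigma(\{0\}):=\{0\}$, which trivially satisfies (A1) and (A2). For the inductive step, let $V$ be a node with parent $V'=V_{s-1}$, let $\eta=(\eta_1,\dots,\eta_s)$ be a sign vector for $V$ with truncation $\eta'$, and suppose $\sigma(V',\eta')$ has already been built as a rational simplicial cone satisfying (A1) and (A2). The task is to produce $m':=\dim V-\dim V'$ rational rays $\R_+v_1,\dots,\R_+v_{m'}$ in $V$ so that $\sigma(V,\eta):=\sigma(V',\eta')+\sum_{j=1}^{m'}\R_+v_j$ has the required properties.

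I will first produce the projections $\bar v_j:=\pi(v_j)$ in the quotient, where $\pi\colon V\to V/V'$ is the natural projection, and then lift. The crucial observation is that $V'$ is spanned by eigenspaces distinct from $E(V)$, so the functional $x_s$ vanishes on $V'$ and descends to a nonzero functional $\bar x_s$ on $V/V'$; in particular $\pi(e(V,\eta))$ is nonzero and satisfies $\eta_s\bar x_s(\pi(e(V,\eta)))>0$. Because rational simplicial cones are dense in the space of simplicial cones, while the condition of containing a prescribed vector in the interior is open, I can choose rational, linearly independent vectors $\bar v_1,\dots,\bar v_{m'}\in V/V'$ that span a simplicial cone containing $\pi(e(V,\eta))$ in its interior and satisfying $\eta_s\bar x_s(\bar v_j)>0$ for every $j$. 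Lift them to $v_j^0\in V\cap N_\Q$ with $\pi(v_j^0)=\bar v_j$. Then pick some $u_0\in C(V',\eta')\cap V'\cap N_\Q$ (available by the inductive hypothesis and density of rationals) and set $v_j:=v_j^0+Nu_0$ for a sufficiently large positive integer $N$. Since $u_0\in V'$, the projection $\pi(v_j)=\bar v_j$ is unchanged and $x_s(v_j)=x_s(v_j^0)$ still has sign $\eta_s$; for $k<s$ we have $x_k(v_j)=x_k(v_j^0)+Nx_k(u_0)$ with $\eta_kx_k(u_0)>0$, so taking $N$ large forces $\eta_kx_k(v_j)>0$. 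Hence each $v_j$ lies in $C(V,\eta)$.

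With the $v_j$'s in hand, the verifications of (A1) and (A2) become routine. The cone $\sigma(V,\eta)$ is simplicial and spans $V$ because the generators of $\sigma(V',\eta')$ together with the $v_j$'s are linearly independent---the $\pi(v_j)$'s form a basis of $V/V'$---and the identity $\sigma(V,\eta)\cap V'=\sigma(V',\eta')$ follows by the same observation. By construction $\pi(e(V,\eta))$ lies in $\interior \pi(\sigma(V,\eta))=\interior\sum_{j=1}^{m'}\R_+\bar v_j$. Finally, any interior point $\sum\alpha_iu_i+\sum\beta_jv_j$ of $\sigma(V,\eta)$ with $\alpha_i,\beta_j>0$ satisfies $\eta_kx_k>0$ for every $k$: for $k<s$ this uses $u_i\in C(V',\eta')$ together with $v_j\in C(V,\eta)$, while for $k=s$ the contribution $\sum\alpha_i\eta_sx_s(u_i)$ vanishes by $x_s|_{V'}=0$, leaving $\sum\beta_j\eta_sx_s(v_j)>0$. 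The only genuinely non-trivial step is the rational approximation producing the $\bar v_j$'s, but this is handled by the density of rational cones combined with the openness of the interior condition, so I expect no serious obstacle.
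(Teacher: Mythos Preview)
Your proof is correct and follows essentially the same inductive strategy as the paper: build $\sigma(V,\eta)$ from $\sigma(V',\eta')$ by adjoining $m'$ new rational generators whose projections to $V/V'$ form a simplicial cone around $\pi(e(V,\eta))$, and push these generators into $C(V,\eta)$ by adding a rational point of $C(V',\eta')$. The only differences are cosmetic---the paper works in a complement $\widetilde V$ rather than the quotient and rationalizes at the end via a single perturbation of $e_1+\dots+e_s+\tilde s_i$, whereas you rationalize in the quotient first and then translate by $Nu_0$---and one minor imprecision: the generators $u_i$ of $\sigma(V',\eta')$ lie only in $\overline{C(V',\eta')}$, not necessarily in $C(V',\eta')$, but since $\eta_k x_k(u_i)\ge 0$ and $\eta_k x_k(v_j)>0$ your interior computation still goes through.
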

\begin{proof}
We will construct rational cones $\sigma(V,\eta)$ inductively. First let $\sigma(\{0\},\eta)=\{0\}$. Now let $V\ne\{0\}$ be a node in $\tred(\phi)$ and $\eta$ a sign vector. Assume that $\sigma(V',\eta')$ is constructed, where $V'$ is the parent of $V$ in $\tred(\phi)$ and $\eta'$ is the truncation of $\eta$. Moreover, assume that the genealogy of $V'=V_s$ is given by \eqref{gen}. Write $m':=\dim V-\dim V'$ and $V=V' + \widetilde V$, and pick $x\in M_\R$, such that $E(V)^\perp=\{x=0\}$ and $x(e(V,\eta))>0$. For $1\leq i\leq m'$, pick $\tilde s_i\in \widetilde V$, such that $e(V,\eta)\in\interior \sum_{i=1}^{m'}\R_+ \tilde s_i$ and $x(\tilde s_i)>0$. Next, let $v_i$ be a rational perturbation of 
\begin{equation}\label{detroit}
e(V_1,\eta^1)+\dots + e(V_{s-1}, \eta^{s-1})+e(V_s,\eta^s)+ \tilde s_i,
\end{equation}
where $\eta^k$ are truncations of $\eta=\eta^s$. 
Since $V$ is rational, we can find arbitrarily small such perturbations. 
Note that, provided the perturbation is small enough, $v_i\in C(V,\eta)$. Finally, let $\sigma(V,\eta):=\sigma(V',\eta')+\sum_{i=1}^{m}\R_+ v_i$. If the perturbations $v_i$ of \eqref{detroit} are small enough, then $\sigma(V,\eta)$ satisfies properties (A1) and (A2). 
\end{proof}

\subsection{Invariant adapted systems of real cones}\label{step3}
In this section we will construct a real (not necessarily rational) invariant adapted system of cones $\mathcal G=\{\Gamma(V,\eta)\}$. 
Later, in Section~\ref{step6}, we will perturb the cones in $\mathcal G$ into rational cones. 

Let \eqref{gen} be the genealogy of $V$ in $\tred(\phi)$ and pick a sign vector $\eta\in\{\pm 1\}^s$. 
For $1\leq k \leq s$, let $\eta^k$ be the truncation of $\eta$, $e_k:=e(V_k,\eta^k)$ with corresponding eigenvalue $\nu_k$, and $m_k:=\dim V_k-\dim V_{k-1}$. Moreover, choose nonzero eigenvectors $e_{k,i}$, ordered so that $\nu_{k,1}>\dots >\nu_{k,m_k}$ and $e_k=e_{k,1}$. Then $V_k=V_{k-1}\oplus \widetilde V_k$, where $\widetilde V_k = \bigoplus_{i=1}^{m_k}\R e_{k,i}$. 

Given parameters $\delta_1, \delta_2, \dots, \delta_s >0$ and $\varepsilon_2,\dots, \varepsilon_s >0$, let $\gamma_1:=1$ and $\gamma_k:=\varepsilon_2\dots\varepsilon_k$ for $k\geq 2$. Further, for $1\leq k\leq s$ and $1\leq i\leq m_k$ set 
\begin{align} 
 v_{1,i} &= e_1+\delta_1\tilde v_{1,i}\\
  v_{k,i} &= e_1+2^{-1}\gamma_2e_2+\dots+2^{2-k}\gamma_{k-1}e_{k-1}
  +2^{2-k}\gamma_k(e_k+\delta_k\tilde v_{k,i})\quad\text{if $k>1$}
\end{align}
where
\begin{equation*}
\tilde v_{k,i}=
\left \{
\begin{array}{ll}
e_{k,2}+\dots +e_{k,i}- e_{k,i+1} & \text{ if } 1\leq i < m_k\\
e_{k,2}+\dots +e_{k,m_k} & \text{ if } i = m_k
\end{array}\right.
\end{equation*}
Here $\tilde v_{k,1}$ should be interpreted to be equal to $-e_{k,2}$ if $m_k\geq 2$ and $0$ otherwise. Note that $v_{k,i}\in V_k$ and $\tilde v_{k,i}\in \widetilde V_k$. Also note that $v_{k,i}$ and $\tilde v_{k,i}$ depend on the sign vector $\eta$, since $e_{k,i}$ do.   Finally note that 
$\sum_{i=1}^{m_k}\R_+ (  e_k + \delta_k \tilde v_{k,i})$ is a simplicial real cone in $\widetilde V_k$ of dimension $m_k=\dim \widetilde V_k$ containing $ e_k$ in its interior. 

Now let 
\[
\Gamma(V_k,\eta^k):=
\sum_{j=1}^k\sum_{i=1}^{m_j}\R_+ v_{j,i}=
\Gamma(V_{k-1}, \eta^{k-1}) + \sum_{i=1}^{m_k}\R_+ v_{k,i}\subseteq V_k.
\]
Observe that $\Gamma(V_k, \eta^k)\cap V_{k-1}=\Gamma(V_{k-1},\eta^{k-1})$, since the coefficients of $e_k$ in $v_{k,i}(\eta)$ are positive for $1\leq i \leq m_k$. 

To show that $\Gamma(V,\eta)=\Gamma(V_s,\eta^s)$ satisfies properties (A1)-(A2), let us give a dual description of $\Gamma(V,\eta)$ in $V=V_s$. Let 
$\{x_{\ell,j}\}_{1\leq \ell\leq s, 1\leq j\leq m_\ell}$ be the basis of $V^*$ dual to $\{e_{k,i}\}_{1\leq k\leq s, 1\leq i\leq m_k}$, so that 
 $\langle x_{\ell,j}, e_{k,i}\rangle=1$ if $\ell=k$ and $j=i$ and $\langle x_{\ell,j}, e_{k,i}\rangle=0$ otherwise. Write $x_\ell:=x_{\ell,1}$. 

For $1\leq \ell < j\leq s$, let $a_{\ell,j}=\varepsilon_{\ell+1}^{-1}\dots\varepsilon_j^{-1}$, and let 
\[
\xi_{\ell,j}:=\delta_\ell^{-1}\tilde\xi_{\ell,j}+x_\ell-(a_{\ell,\ell+1}x_{\ell+1}+\dots +a_{\ell,s} x_s),
\]
where
\begin{equation*}
\tilde \xi_{\ell,j}:=
\left\{
\begin{array}{ll}
x_{\ell,2}+2 x_{\ell,3}+\dots + 2^{j-2} x_{\ell,j}- 2^{j-1}x_{\ell,j+1} & \text{ if } 1\leq j < m_\ell\\
x_{\ell,2}+ 2 x_{\ell,3}+\dots +2^{m_\ell-2} x_{\ell,m_\ell} & \text{ if } j = m_\ell
\end{array} \right.
\end{equation*}
Here $\tilde \xi_{\ell, 1}$ should be interpreted as $-x_{\ell,2}$ if $m_\ell\geq 2$ and $0$ otherwise. 

A computation yields that 
$\langle \xi_{\ell, j}, v_{k,i}\rangle>0$ if $\ell=k$ and $i=j$ and $\langle \xi_{\ell, j}, v_{k,i}\rangle=0$ otherwise, so that the dual cone 
$\Gamma(V,\eta)^*=\sum_{\ell=1}^s\sum_{j=1}^{m_\ell}\R_+ \xi_{\ell,j}(\eta)$. 

We claim that $\phi$ maps the open rays $\R^*_+v_{s,1},\dots, \R^*_+ v_{s,m_s}$ into $\interior\Gamma(V,\eta)$.

To prove the claim, observe first that
\begin{equation}\label{avbildning}
\langle \xi_{s,j}, \phi(v_{s,i})\rangle = 
2^{2-s}\gamma_s(\nu_s +\langle \tilde\xi_{s,j}, \phi(\tilde v_{k,i})\rangle ), 
\end{equation}
where
\begin{equation*}
\langle \tilde\xi_{s, j}, \phi(\tilde v_{s,i})\rangle=
\left \{
\begin{array}{ll}
\nu_{s,2}+\dots + 2^{m_s-2}\nu_{s,m_s} & \text{ if } i=j=m_s\\
\nu_{s,2}+\dots + 2^{i-1}\nu_{s,i+1} & \text{ if } i=j<m_s\\
\nu_{s,2}+\dots + 2^{I-2}\nu_{s,I}-2^{I-1}\nu_{s,I+1} & \text{ if } i\neq j; \text{ here }I=\min(i,j)
\end{array}\right.
\end{equation*}
Here the second line should be interpreted as $0$ if $i=j=1$. 
Now, the right hand side of \eqref{avbildning} is strictly positive, since $\nu_{s}>\nu_{s,2}>\dots >\nu_{s,m_s}$. 
Moreover, for $\ell< s$,
\begin{equation*}
\langle \xi_{\ell,j}, \phi(v_{s,i})\rangle = 
2^{1-\ell}\gamma_\ell(\nu_\ell-2^{-1}\nu_{\ell+1}-\dots - 2^{\ell+1-s}\nu_{s-1}-2^{\ell+1-s}\nu_s),
\end{equation*}
which is strictly positive since $\nu_1>\dots >\nu_s$. 

To conclude, $\langle \xi_{\ell,j},\phi(v_{s,i})\rangle>0$ for $1\leq \ell\leq s$ and $1\leq j\leq m_\ell$, and thus we have proved that $\phi(\R^*_+ v_{s,i})$ lies in the interior of $\Gamma(V,\eta)$. In particular, by induction, $\Gamma(V,\eta)$ is invariant under $\phi$.

\subsection{Preparation of the fan}\label{step5}
In order to prove Theorems A and A', we first
refine $\Delta$ so that for each rational invariant 
subspace $V$, that is, each node in $\tred (\phi)$, there is a subfan of $\Delta$ whose support is $V$. In particular, $\Delta$ is complete.
This is possible to do since the rational rays are dense in $V$. 

Next, we refine $\Delta$ so that it contains an adapted system of cones. This can be done as follows. Let $\mathcal S$ be a rational adapted system of cones; its existence being guaranteed by Lemma~\ref{rationella}. Let $\Delta_{\mathcal S}$ be the fan generated by the cones in $\mathcal S$, and let $\Delta'$ be a fan that refines both $\Delta$ and $\Delta_{\mathcal S}$. Then by Lemma~\ref{forfinas}, $\Delta'$ contains an adapted system of cones. 

Finally, by Lemma~\ref{L402} we can refine $\Delta'$ so that it becomes regular and projective. The resulting fan will contain a unique adapted system of cones by Lemma~\ref{forfinas}.

\subsection{Proof of Theorem~A'}
Let $\Delta'$ be the refined fan in Section~\ref{step5} and let 
$\mathcal S=\{\sigma(V,\eta)\}$ denote the unique adapted system of cones. 
Consider $\rho\in\Delta'(1)$. According to Lemma~\ref{attraherar} and Remark~\ref{attrmk} there is $n_0(\rho)\in\N$, such that, for $n\geq n_0(\rho)$ $\interior\phi^n(\rho)\subseteq \interior \sigma(V,\eta)$ for some $\sigma(V,\eta)\in\mathcal S$. Let $n_0:=\max_{\rho\in\Delta'(1)} n_0(\rho)$. 

Moreover, according to Lemma~\ref{attraherar} and Remark~\ref{lol}, for $n\geq n_0$ (with $n_0$ possibly replaced by a larger number), $V$ a node in $\tred (\phi)$, and $\eta$ a sign vector, $\phi^n(\sigma(V,\eta))\subseteq \sigma(V,\eta')$ for some sign vector $\eta'$. Now Corollary~\ref{invkoner} and Remark~\ref{teckenrmk} assert that $f^n:X(\Delta')\dashrightarrow X(\Delta')$ is $1$-stable for $n\geq n_0$, which concludes  the proof of Theorem~A'.

\subsection{Incorporation of cones}\label{step6}
We will now prove Theorem~A. 
Given a fan $\Delta$ in $N$, replace $\Delta$ by the refined fan in Section~\ref{step5} and let $\mathcal S=\{\sigma(V,\eta)\}$ be the (unique) adapted system of cones in $\Delta$. We will construct and incorporate into $\Delta$ a rational invariant adapted system of cones $\mathcal T=\{\tau(V,\eta)\}$. This will be done inductively over the reduced tree $\tred(\phi)$. In fact, the cones in $\mathcal T$ will be perturbations of the cones in the real invariant adapted system $\mathcal G$ constructed in Section~\ref{step3}. 

Let $V$ be a node in $\tred(\phi)$, with genealogy \eqref{gen}. We will construct and incorporate cones $\tau(V,\eta)$ for all possible sign vectors $\eta$ by inductively constructing and incorporating cones $\tau_k=\tau(V_k,\eta^k)$ for $1\leq k\leq s$ and all possible choices of sign vectors $\eta^k$. Let us use the notation from Section~\ref{step3}, and write $\Gamma_k:=\Gamma(V_k,\eta_k)$. 
Moreover, let $w_k:=e_1+2^{-1}\gamma_2e_2+\dots +2^{2-k}\gamma_{k-1}e_{k-1}+2^{2-k}\gamma_ke_k$ and $u_k:=e_1+\dots +2^{2-k}\gamma_{k-1}e_{k-1}+2^{1-k}\gamma_ke_k$, so that $w_k=u_{k-1}+2^{2-k}\gamma_ke_k$.

Write $\sigma_1:=\sigma(V_1,\eta^1)$. Note that $w_1=e_1\in\interior \sigma_1$. 
By continuity we can choose $\delta_1$ so that $v_{1,i}=w_1+\delta_1 \tilde v_{1,i}\in\interior \sigma_1$ for $1\leq i \leq m_1$. 
Moreover, since the rational rays are dense in $V_1$ we can find rational perturbations $t_{1,i}$ of $v_{1,i}$, such that $t_{1,i}\in\interior\sigma_1$ for $1\leq i\leq m_1$. Write $\tilde t_{1,i}=t_{1,i}-v_{1,i}$. 
Now let $\tau_1=\sum_{i=1}^{m_1} \R_+ t_{1,i}$. Then $\tau_1$ is a perturbation of $\Gamma_1$ and if $\tilde t_{1,i}$ are small enough, then $\phi(\interior \tau_1)\subseteq \interior \tau_1$, since $\phi(\interior\Gamma_1)\subseteq \interior \Gamma_1$. Also, $\tau_1$ satisfies properties (A1) and (A2) in Section~\ref{adapt} and  $u_1\in\interior \tau_1$. By Lemmas~\ref{barvinok} and~\ref{L401} we can find a simplicial and projective refinement $\Delta'$ of $\Delta$, such that $\tau_1\in\Delta'$ and all cones in $\Delta$ that do not contain $\sigma_1$ are in $\Delta'$. Replace $\Delta$ by $\Delta'$, and $\mathcal S$ by the unique adapted system of cones in $\Delta'$. Such a system exists by Lemma~\ref{forfinas}.

Write $\sigma_2:=\sigma(V_2,\eta^2)$, let $\pi_1:N_\R\to N_\R/V_1$ be the natural projection, and let $\stjarna(\tau_1):=\{\sigma\in\Delta\mid \sigma \supseteq \tau_1\}$. Since $u_1\in\interior\tau_1$, $|\stjarna(\tau_1)|$ contains a neighborhood of $u_1$ in $N_\R$. In particular, $w_2=u_1+\gamma_2e_2$ is in the interior of some cone in $\stjarna(\tau_1)$ if $\gamma_2$ is small enough and since $\pi_1(  e_2)\in\pi_1(\sigma_2)$ this cone has to be $\sigma_2$. 

By continuity, we can choose $\delta_2$ small enough so that $v_{2,i}=w_2+\delta_2 \gamma_2\tilde v_{2,i}\in\interior\sigma_2$  for $1\leq i\leq m_2$. Furthermore, we can replace $v_{2,i}$ by rational perturbations $t_{2,i}\in\interior\sigma_2$; write $\tilde t_{2,i}= v_{2,i}-t_{2,i}$. Now let $\tau_2:=\tau_1+\sum_{i=1}^{m_2} \R_+ t_{2,i}$. 
Since the rays $v_{2,i}$ are mapped into the interior of $\Gamma_2$, $\phi(t_{2,i})\in\interior \tau_2$ if $\tilde t_{2,i}$ are small enough. Hence $\tau_2$ is invariant. If $\tilde t_{2,i}$ are small enough, $\tau_2$ satisfies properties (A1) and (A2) in Section~\ref{adapt}, and $u_2=e_1+2^{-1}\gamma_2e_2\in\interior \tau_2$. 
Since $t_{2,i}\in\interior\sigma_2$, $\partial \sigma_2\cap\partial \tau_2=\tau_1$, which is a face of both $\sigma_2$ and $\tau_2$. Thus, according to Lemmas~\ref{barvinok} and~\ref{L401}, we can find a simplicial and projective refinement $\Delta'$ of $\Delta$, such that $\tau_2\in\Delta'$ and that the cones in $\Delta$ that do not contain $\sigma_2$ are in $\Delta'$. Replace $\Delta$ by such a refinement and $\mathcal S$ by the new adapted system.

Inductively assume that we have constructed and incorporated $\tau_{k-1}$ so that $u_{k-1} =  e_1+ \dots + 2^{3-k}\gamma_{k-2} e_{k-2} + 2^{2-k}\gamma_{k-1}   e_{k-1}\in\interior\tau_{k-1}$; 
here $\varepsilon_2,\dots, \varepsilon_{k-1}$, and hence $\gamma_2, \dots, \gamma_{k-1}$,  are chosen along the way. By arguments as above we can choose $\varepsilon_k$, and hence $\gamma_k$, such that $w_k=u_{k-1}+2^{2-k}\gamma_k e_k\in\interior \sigma_k$, where $\sigma_k:=\sigma(V_k,\eta^k)$. 
Moreover, we can choose $\delta_k$ and $\tilde t_{k,i}$ so that $t_{k,i}:=v_{k,i}+\tilde t_{k,i}$ are rational and contained in $\sigma_k$. 
Now, let $\tau_k:=\tau_{k-1}+\sum_{i=1}^{m_k} \R_+ t_{k,i}$. If $\tilde t_{k,i}$ are small enough $\tau_k$ is invariant and satisfies properties (A1) and (A2) in Section~\ref{adapt} and $u_k\in\interior \tau_k$. 
Since $\partial \sigma_k\cap\partial \tau_k=\tau_{k-1}$ is a face of both $\sigma_k$ and $\tau_k$, we can incorporate $\tau_k$ into $\Delta$ 
according to Lemma~\ref{barvinok} and the resulting fan will have a unique adapted system of cones. By Lemma~\ref{L401} we can choose the resulting fan projective.

We need to show that when incorporating a cone $\tau(V,\eta)\in\mathcal T$ into $\Delta$ we do not affect the cones in $\mathcal T$ already created and incorporated. 

Assume that $\hat\tau:=\tau(\widehat V, \hat\eta)$ is in $\Delta$. We claim that $\hat\tau$ is not affected when incorporating $\tau$. By Lemma~\ref{barvinok} it suffices to show that $\hat\tau$ does not contain $\sigma$. If $\widehat V=V$ but $\hat\eta=(\hat\eta_1,\dots, \hat\eta_s)\neq \eta$, then $\hat\tau\not\supseteq\sigma$, since $\interior \hat\tau\subseteq C(V, \hat\eta)$ and $\interior \sigma\subseteq C(V,\eta)$ are contained in different components of $C(V)$.

Therefore assume that $\widehat V \neq V$. Let $\{0\}=\widehat V_0\subsetneq \widehat V_1\dots \subsetneq \widehat V_r=\widehat V$ be the genealogy of $\widehat V$.  By assumption we have constructed and incorporated cones $\tau(\widehat V_k,\hat \eta^k)$ for $1\leq k\leq r$  and all possible sign vectors $\hat\eta^k$. Thus $V$ is not among the $\widehat V_k$. By construction, $\hat\tau$ is of the form $\hat\tau=\sum_{k=1}^r\sum_{i=1}^{m_k}\R_+\hat t_{k,i}$, where $\hat t_{k,i}\in C(\widehat V_k)$. From Lemma~\ref{newadapted} we know that the smallest rational invariant subspace containing $\hat t_{k,i}$ is $\widehat V_k$. Thus, the smallest rational invariant subspace of $N_\R$ containing a given face of $\hat\tau$ is among the $\widehat V_k$. Since the smallest rational invariant subspace containing $\sigma$ is $V$, we conclude that $\sigma$ is not a face of $\hat\tau$. The claim is proved.

\subsection{Incorporation of rays}\label{step7}
Let $\Delta$ be the fan in Section~\ref{step6} and let $\mathcal T$ be the rational adapted systems of cones. 
We claim that we can find a further refinement $\Delta'$ of $\Delta$ such that if $\rho$ is a ray in $\Delta'$ and $n\geq 1$, then either $\phi^n(\rho)\in\Delta'(1)$ or $\phi^n(\rho)$ is contained in a cone in $\mathcal T$. By Corollary~\ref{invkoner} $f:X(\Delta')\dashrightarrow X(\Delta')$ is then 1-stable, which proves Theorem~A. 

It remains to prove the claim. 
Since, by Lemma~\ref{attraherar}, every ray in $\Delta$ is eventually mapped into one of the cones in $\mathcal T$ it is sufficient the add to $\Delta$ the finitely many rays of the form $\phi^n(\rho)$, where $\rho\in\Delta(1)$ and $\phi^n(\rho)$ is not contained in any of the cones in~$\mathcal T$. 

Let $\rho'=\phi^n(\rho)$ be such a ray, and let $\sigma'$ be the unique cone in $\Delta$, such that $\interior\rho'\subseteq\interior\sigma'$. By Lemma~\ref{barvinok},  we can find a refinement $\Delta'$ of $\Delta$, such that $\rho'\in\Delta'(1)$, $(\Delta'(1)\setminus \rho')\subseteq \Delta(1)$, and such that if $\sigma\in\Delta$ does not contain $\sigma'$, then $\sigma\in\Delta'$. Moreover, by Lemma~\ref{L401}, $\Delta'$ can be chosen projective. Note that since $\rho'$ by assumption is not contained in any cone in $\mathcal T$, $\sigma'$ cannot be a face of a cone in $\mathcal T$. Thus all cones in $\mathcal T$ are in $\Delta'$. 

This proves the claim and thus concludes the proof of Theorem~A.

\section{Proof of Theorem~B}\label{sec:proofB}
Let $E\subseteq N_\R$ be the one-dimensional eigenspace of $\phi$ associated with $\mu_1$, choose $x\in M_\R$ such that $E^\perp=\{x=0\}$, and let $e$ be a generator of $E$, such that $x(e)>0$.

By techniques as in Sections~\ref{step3} and~\ref{step6} we can choose $v_1,\dots, v_m\in N$, such that $x(v_j)>0$ for $1\leq j\leq m$, $e$ lies in the interior of the cone $\sigma:=\sum_{j=1}^m\R_+v_j$, and $\sigma$ is invariant. 
Let 
$\Delta:=\{\sum_{j=1}^m\R_+\epsilon_jv_j\}_{\epsilon_j\in\{0,-1,+1\}^m}$.
Then $\Delta$ is a complete simplicial fan. The cones $\sigma$ and $\sum_{j=1}^m\R_+(-v_j)$ are invariant and all rays in $\Delta$ are mapped into one of these cones. Thus Corollary~\ref{invkoner} asserts that $f:X(\Delta)\dashrightarrow X(\Delta)$ is 1-stable. Also,  $\Delta$ admits a strictly convex $\Delta$-linear support function
of the form $\max_j|v_j^*|$, so $X(\Delta)$ is projective, see Section~\ref{divisorstycke}.  This completes the proof of Theorem~B.

We have the following partial analogue of Theorem~A'.
\begin{thmB'}
Let $f:(\C^*)^m\to(\C^*)^m$ be a monomial map. Suppose that the associated eigenvalues satisfy $|\mu_1|>|\mu_2|\geq|\mu_3|\geq\dots\geq|\mu_m|>0$. Then there exist a complete simplicial fan $\Delta'$ and $n_0\in\N$, such that $X(\Delta')$ is projective and $f^n:X(\Delta')\dashrightarrow X(\Delta')$ is $1$-stable for $n\geq n_0$.
\end{thmB'}

\begin{proof}
Let $E$, $e$, and $x$ be as in the proof of Theorem~B. Choose $v_1,\dots, v_m\in N$, such that $x(v_j)>0$ and $e\in\interior\sum_{j=1}^m\R_+v_j$, and construct a fan $\Delta$ as in the proof of Theorem~B. Then there is an $n_0\in\N$, such that, for $n\geq n_0$, $\sum_{j=1}^m\R_+v_j$ and $\sum_{j=1}^m\R_+(-v_j)$ are invariant under $\phi^n$; in particular $\phi^n$ maps all rays in $\Delta$ into one of these cones. Now Theorem~B' follows from Corollary~\ref{invkoner}.
\end{proof}
\begin{remark}\label{R301}
If we could find a regular refinement of $\Delta'$, not containing any rays in $E^\perp$, then we would get a smooth toric variety on which $f^n$ would be $1$-stable in Theorem~B'. However, when regularizing $\Delta'$ it seems difficult to control where the new rays appear; compare Section~\ref{sec:dimtwo}.
\end{remark}

By slightly modifying the proof of Theorem~A, we could solve the problem of making $f:X(\Delta)\dashrightarrow X(\Delta)$ $1$-stable in more general situations than the one in Theorem~A. Let us mention a result in the same vein as Theorem~B. 
\begin{prop}\label{specfall}
Let $\Delta$ be a (complete) simplicial fan in a lattice $N$, and let $f:X(\Delta)\dashrightarrow X(\Delta)$ be a monomial map. Assume that the associated eigenvalues satisfy $\mu_1 > \mu_2\geq \dots \geq \mu_m >0$. 

Let $E$ be the one-dimensional eigenspace of $\phi$ associated with $\mu_1$, and let $e$ be a generator of $E$. 
Assume that there are cones $\sigma^+,\sigma^-\in\Delta(m)$, such that $E^\perp\cap\sigma^\pm=\{0\}$ and $\pm e\in\interior \sigma^\pm$, and moreover that $E^\perp$ contains no rays of $\Delta$. Then there exists a simplicial refinement $\Delta'$ of $\Delta$ such that $f:X(\Delta')\dashrightarrow X(\Delta')$ is $1$-stable.
If $\Delta$ is projective, then $\Delta'$ can be chosen projective.
\end{prop}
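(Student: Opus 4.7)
The plan is to imitate the proof of Theorem~B by constructing invariant full-dimensional simplicial rational cones $\tau^\pm$ inside $\sigma^\pm$ with $\pm e$ in their interiors, then incorporate $\tau^\pm$ and finitely many transient orbits of the remaining rays into $\Delta$, and conclude via Corollary~\ref{invkoner} applied to $\cS=\{\tau^+,\tau^-\}$.

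For the construction of $\tau^\pm$, choose $x\in M_\R$ with $E^\perp=\{x=0\}$ and $x(e)>0$. Since $\mu_1$ is a simple strictly dominant eigenvalue, $V=E^\perp$ is $\phi$-invariant with spectral radius $\mu_2<\mu_1$, and the iterates $\phi^n(v)/\mu_1^n$ converge to the $\R_+e$-direction for any $v$ with $x(v)>0$. Using the real construction in Section~\ref{step3} (applied to the genealogy consisting of the single non-trivial node $N_\R$) together with the rational perturbation argument from Section~\ref{step6}, I would produce $v_1,\dots,v_m\in N\cap\interior\sigma^+$ with $x(v_j)>0$, $e\in\interior\sum_j\R_+v_j$, and $\phi$ mapping $\tau^+:=\sum_j\R_+v_j$ into itself. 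Because $e\in\interior\sigma^+$, choosing the $v_j$ sufficiently close to $e$ guarantees $\tau^+\subseteq\sigma^+$ and $\tau^+\cap\partial\sigma^+=\{0\}$. The cone $\tau^-\subseteq\sigma^-$ around $-e$ is built in the same manner.

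Next, I would apply Lemma~\ref{barvinok} twice (once for $\tau^+$ and once for $\tau^-$) to obtain a simplicial refinement $\Delta_1$ of $\Delta$ containing both cones; the required condition that $\partial\tau^\pm\cap\partial\sigma^\pm=\{0\}$ be a face of both cones is precisely what was arranged above. The new rays of $\Delta_1$ are one-dimensional faces of $\tau^\pm$, hence lie inside $\sigma^\pm$ and therefore off $E^\perp$, since $\sigma^\pm\cap E^\perp=\{0\}$. If $\Delta$ is projective, Lemma~\ref{L401} lets me keep $\Delta_1$ projective. I would then process transient orbits as in Section~\ref{step7}: every ray of $\Delta_1$ satisfies $x(\rho)\neq 0$, so strict dominance of $\mu_1$ forces $\phi^n(\rho)\subseteq\interior\tau^+$ or $\interior\tau^-$ for all $n\geq n_0(\rho)$. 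Successively incorporate via Lemma~\ref{barvinok} each of the finitely many intermediate rays $\phi^n(\rho)$ with $1\leq n<n_0(\rho)$ that are not already rays of the current fan. Each such ray lies off $\tau^\pm$ by minimality of $n_0$, so the minimal cone containing it in its interior is not contained in $\tau^\pm$, and Lemma~\ref{barvinok} preserves both $\tau^\pm$ and all previously added rays. After finitely many steps we obtain a simplicial refinement $\Delta'$ satisfying the hypotheses of Corollary~\ref{invkoner}, yielding 1-stability. Projectivity is preserved throughout by Lemma~\ref{L401}.

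The main technical hurdle is the bookkeeping in the last step: ensuring that each incorporation of a transient ray preserves the previously introduced invariant cones $\tau^\pm$ and the rays added earlier. This is handled exactly as in the final part of the proof of Theorem~A, using the observation that a ray not contained in a given cone automatically forces the minimal ambient cone to not be contained in that cone either. Everything else is substantially easier than in Theorem~A because only a single eigenspace $E$ needs to be tracked, and the cones $\sigma^\pm$ are provided by hypothesis.
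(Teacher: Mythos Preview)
Your proposal is correct and follows essentially the same approach as the paper: construct rational invariant cones $\tau^\pm\subseteq\sigma^\pm$ via the techniques of Sections~\ref{step3} and~\ref{step6}, incorporate them using Lemma~\ref{barvinok}, then add the finitely many transient rays as in Section~\ref{step7}, and conclude with Corollary~\ref{invkoner} and Lemma~\ref{L401}. Your write-up is in fact more explicit than the paper's about the bookkeeping (why new rays stay off $E^\perp$, and why incorporating transient rays does not disturb $\tau^\pm$), but the underlying argument is the same.
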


\begin{proof}
Following Sections~\ref{step3} and~\ref{step6} we can find rational invariant simplicial cones $\tau^+$ and $\tau^-$ of dimension $m$, such that $\tau^\pm\subseteq \sigma^\pm$ and $\pm e\in\interior\tau^\pm$. By Lemma~\ref{barvinok} we can incorporate $\tau^\pm$ into $\Delta$ without adding extra rays. 

Since, by assumption, $E^\perp$ contains no rays of $\Delta$, all rays of $\Delta$ are eventually mapped into $\tau^+$ or $\tau^-$. Following Section~\ref{step7} we can incorporate the rays $\phi^n(\rho)$, where $\rho\in\Delta$ and $\phi^n(\rho)$ is not contained in $\tau^\pm$, into $\Delta$. More precisely, we can find a simplicial refinement $\Delta'$ of $\Delta$, 
such that $\tau^\pm\in\Delta'$ and each ray in $\Delta'$ is either mapped onto another ray in $\Delta'$ or into $\tau^+$ or $\tau^-$. Now $f:X(\Delta')\dashrightarrow X(\Delta')$ is $1$-stable by Corollary~\ref{invkoner}.

By Lemma~\ref{L401} $X(\Delta')$ in Proposition~\ref{specfall} can be chosen projective, provided that $\Delta$ is projective, cf. Section~\ref{sec:proofA}.
\end{proof}

Observe, in light of the above proof, that the way of constructing the fan $\Delta'$ in the proof of Theorem~A is in general far from being optimal in the sense that in general we refine $\Delta$ more than necessary. Indeed, if we would follow the strategy in Section~\ref{sec:proofA}, we would typically start out by adding rays inside the hyperplane $E^\perp$, see Section~\ref{step5}.

\section{Examples}\label{sec:examples}
We now illustrate our method for proving Theorem~A in dimensions~2 and~3. 
We also give examples illustrating the difficulties when the eigenvalues
have different signs.

Let $\mu$ be an eigenvalue of $\phi:N\to N$. Recall that either $\mu\in\Z$ or $\mu\notin\Q$. Suppose that $\mu$ is a simple eigenvalue, with corresponding one-dimensional eigenspace $E$. If $\mu\in\Z$, then $E$ and $E^\perp$ are rational. On the other hand if $\mu\notin\Q$, then $E$ is not rational.

\begin{ex}\label{dim2}
Let $N\cong \Z^2$ and let $\phi:N\to N$ be a $\Z$-linear map with eigenvectors $\mu_1>\mu_2>0$ and corresponding eigenspaces $E_1, E_2$. Then either $\mu_1,\mu_2\in\Z$ or $\mu_1,\mu_2\notin \Q$. In the first case, $E_1$ and $E_2$ are both rational and thus $T(\phi)$ and $\tred(\phi)$ are given by:
\begin{equation*}
  \xymatrix{
& (V_\emptyset,V_{12}) \POS []; [d]**\dir{-}, []; [dr]**\dir{-} & & 
& V_\emptyset \POS []; [d]**\dir{-}, []; [dr]**\dir{-} &
\\
& (V_1,V_{12})  \POS []; [dl]**\dir{-}, []; [d]**\dir{-} & (V_\emptyset,V_2)  \POS []; [d]**\dir{-}, []; [dr]**\dir{-}& \text{and} 
& V_1 \POS []; [d]**\dir{-} & V_2~~~~,
\\
(V_{12},V_{12}) & (V_1,V_1) & (V_2,V_{2}) & (V_\emptyset,V_\emptyset) 
& V_{12} &}
\end{equation*}
respectively. Here $V_\I=\sum_{i\in\I} E_i$ for $\I\subseteq \{1,2\}$; in particular $V_\emptyset = \{0\}$ and $V_{12}=N_\R$. 
In the second case, neither $E_1$ nor $E_2$ is rational and so the trees are given by: 
\begin{equation*}
  \xymatrix{
& (V_\emptyset,V_{12}) \POS []; [dl]**\dir{-}, []; [dr]**\dir{-} & 
&& 
V_\emptyset \POS []; [d]**\dir{-}
\\
(V_{12},V_{12}) && (V_\emptyset,V_\emptyset)
&& 
V_{12}}
\end{equation*}
In the first case the associated chambers are given by $C(V_\emptyset)=\{0\}$, 
$C(V_j)=V_j\setminus\{0\}$, 
and $C(V_{12})=N_\R\setminus (V_1\cup V_2)$, 
In the second case, $C(V_\emptyset)=\{0\}$ 
and $C(V_{12})=N_\R\setminus V_2$. 
Note that $N\cap C(V_{12})=N\setminus\{0\}$.
\end{ex}

\begin{ex}
  Let $N\cong \Z^3$ and let $\phi:N\to N$ be a $\Z$-linear map with eigenvalues $\mu_1 >\mu_2 >\mu_3>0$. Depending on whether or not the eigenvalues are rational, there are five possibilities of $\tred(\phi)$, of which three are the following
  \begin{equation}\label{letrad}
\xymatrix{
& V_\emptyset \POS []; [d]**\dir{-}, []; [dr]**\dir{-} & & 
&
V_\emptyset \POS []; [d]**\dir{-},  []; [dr]**\dir{-} & 
&
V_\emptyset \POS []; [d]**\dir{-}
\\
V_{12}\POS []; [d]**\dir{-}, []; [r]**\dir{-} & V_1 \POS []; [d]**\dir{-} & V_{23}  \POS []; [d]**\dir{-}, []; [dr]**\dir{-}
& 
&
V_{12} \POS []; [d]**\dir{-} & V_3 
& 
V_{123}
\\
V_{123} & V_{13} & V_2 & V_3
& 
V_{123} &
&
}
\end{equation}
Here we have used the notation from Example~\ref{dim2}. The first tree in \eqref{letrad} is obtained when all eigenvalues are integers, the second when $\mu_3$ is the unique integer eigenvalue, and the last tree when all eigenvalues are irrational. If $\mu_1$ or $\mu_2$ is the unique integer eigenvalue we get a tree of the same structure as the second tree, but with $V_{12}$ replaced by $V_{1}$ or $V_{13}$, respectively, and $V_3$ replaced by $V_{23}$ or $V_2$, respectively.
\end{ex}

If some of the eigenvalues of $\phi$ are negative, stabilization 
may not be possible, as the following example shows. 
\begin{ex}\label{negex}
  Let $N\cong \Z^3$ and assume that 
  $\phi:N\to N$ is a $\Z$-linear map with real, irrational eigenvalues
  satisfying $\mu_1>-\mu_2>-\mu_3>0$ and $\mu_1+\mu_2+\mu_3<0$.
  Then there is no simplicial fan $\Delta$ such that 
  $f:X(\Delta)\dashrightarrow X(\Delta)$ is 1-stable.

  Indeed, let $\Delta$ be any complete simplicial fan
  and let $\sigma_1\in\Delta$ be a cone containing a nonzero 
  eigenvector associated to the eigenvalue $\mu_1$ in its
  interior.
  It follows from Lemma~\ref{stablma} 
  that $\phi(\sigma_1)\subseteq\sigma_1$.
  Since the only nontrivial invariant rational
  subspaces of $N_\R$ are are $0$ and $N_\R$, $\sigma_1$ 
  must have dimension three. 
  By Lemma~\ref{spar}, this
  contradicts the assumption $\mu_1+\mu_2+\mu_3<0$.
\end{ex}
A concrete example is given by $\phi$ associated to 
the matrix 
$A=A_\phi=\left [ \begin{smallmatrix} 
    3&1&0\\ 1&-2&1\\0&1&-2 
  \end{smallmatrix} \right ]$. 
Then $\mu_1\approx 3.1997$, $\mu_2\approx -3.0855$, 
and $\mu_3\approx -1.1142$.


\begin{thebibliography}{BK2}

\bibitem[BK1]{BK1} 
E.~Bedford and KH.~Kim.
\newblock \emph{On the degree growth of birational 
  mappings in higher dimension}.
\newblock J. Geom.\ Anal. \textbf{14} (2004), 567--596. 

\bibitem[BK2]{BK2} 
E.~Bedford and KH.~Kim.
\newblock\emph{Linear recurrences in the degree sequences 
  of monomial mappings}.
\newblock Ergodic Theory Dynam. Systems \textbf{28} (2008), 1369--1375.

\bibitem[DF]{DF}
J.~Diller and C.~Favre.
\newblock \emph{Dynamics of bimeromorphic maps of surfaces}.
\newblock Amer. J. Math. \textbf{123} (2001), 1135--1169.

\bibitem[DS]{DS1}
T.-C. Dinh and N.~Sibony.
\newblock \emph{Une borne sup\'erieure pour l'entropie topologique 
  d'une application rationnelle}.
\newblock Ann.\ of Math. \textbf{161} (2005), 1637--644.

\bibitem[Fa]{Favre}
C.~Favre.
\newblock \emph{Les applications monomiales en deux dimensions}.
\newblock Michigan Math. J. \textbf{51} (2003),  467--475.

\bibitem[FJ1]{eigenval}
C.~Favre and M.~Jonsson.
\newblock \emph{Eigenvaluations}.
\newblock Ann. Sci. {\'E}cole Norm. Sup. \textbf{40} (2007), 309--349. 

\bibitem[FJ2]{dyncomp}
C.~Favre and M.~Jonsson.
\newblock \emph{Dynamical compactifications of $\C^2$}.
\newblock To appear in  Ann.\ of Math.

\bibitem[FW]{FW}
C.~Favre and E.~Wulcan.
\newblock \emph{Degree growth of images of varieties by monomial maps}.
\newblock In preparation.

\bibitem[FS]{FS2} 
J.-E. Forn{\ae}ss and N.~Sibony.
\newblock\emph{Complex dynamics in higher dimension}, II.
\newblock In \emph{Modern Methods in Complex Analysis}, 
\newblock Ann.\ of Math. Stud., vol. 137, 
\newblock Princeton Univ. Press, 1995, pp. 135--182.

\bibitem[Fu1]{F1}
W.~Fulton.
\newblock \emph{Introduction to toric varieties}.
\newblock Annals of Mathematics Studies, 131. 
Princeton University Press, Princeton, NJ, 1993.

\bibitem[Fu2]{F2}
W.~Fulton.
\newblock
\emph{Intersection theory}. 
\newblock Ergebnisse der Mathematik und ihrer Grenzgebiete. 
3. Folge. 2, Second edition. Springer-Verlag, Berlin, 1998.

\bibitem[G]{Gu2}
V.~Guedj.
\newblock \emph{Ergodic properties of rational mappings with 
  large topological degree}.
\newblock Ann.\ of Math. \textbf{161} (2005), 1589--1607.

\bibitem[HW]{HardyWright}
  G.H.~Hardy and E.M.~Wright.
\newblock \emph{An introduction to the theory of numbers}.
\newblock Sixth edition, Oxford, 2008.

\bibitem[HP]{propp}
B. Hasselblatt and J. Propp.
\newblock \emph{Degree-growth of monomial maps}.
\newblock Ergodic Theory Dynam. Systems 
\textbf{27} (2007), no. 5, 1375--1397.

\bibitem[La]{Lang}
S. Lang
\newblock
\emph{Algebra.} Second edition, 
\newblock
Addison-Wesley Publishing Company, Advanced Book Program, Reading, MA, 1984.

\bibitem[Li]{Lin}
J.-L.~Lin.
\newblock
\newblock \emph{Algebraic stability and degree growth of 
  monomial maps and polynomial maps}.
\newblock Preprint, \texttt{arxiv.org/abs/1007.0253v2}.

\bibitem[M]{Mustata}
M. Musta\c{t}\u{a}.
\newblock\emph{Lecture notes on toric varieties}. 
\newblock Available on the author's webpage: 
\texttt{www.math.lsa.umich.edu/$\sim$mmustata}.

\bibitem[N]{Nguyen} 
V.-A. Nguy{\^e}n.
\newblock \emph{Algebraic degrees for iterates of meromorphic 
  self-maps of $\mathbf{P}^k$}.
\newblock Publ. Mat. \textbf{50} (2006), 457--473.

\bibitem[O]{Oda}
T.~Oda.
\newblock
\emph{Convex bodies and algebraic geometry. An introduction to the theory of toric varieties}.
\newblock Ergebnisse der Mathematik und ihrer Grenzgebiete, 15.
\newblock Springer-Verlag, Berlin, 1988

\bibitem[RS]{RuSh} 
A.~Russakovskii and B.~Shiffman.
\newblock \emph{Value distribution for sequences of rational 
  mappings and complex dynamics}.
\newblock Indiana Univ. Math. J. \textbf{46} (1997), 897--932.

\bibitem[S]{Sibony}
N.~Sibony.
\newblock \emph{Dynamique des applications 
  rationnelles de $\mathbf{P}^k$}.
\newblock In \emph{Dynamique et g{\'e}om{\'e}trie complexes (Lyon, 1997)}.
\newblock Panor. Synth{\`e}ses, 8 (1999), 97--185.

\bibitem[Z]{Ziegler}
G.~Ziegler.
\newblock
\emph{Lectures on polytopes}.
\newblock
Graduate Texts in Mathematics, 152. Springer-Verlag, New York, 1995.

\end{thebibliography}
\end{document}